\def\bu{\bullet}
\def\marker{\>\hbox{${\vcenter{\vbox{
    \hrule height 0.4pt\hbox{\vrule width 0.4pt height 6pt
    \kern6pt\vrule width 0.4pt}\hrule height 0.4pt}}}$}\>}
\def\gpic#1{#1
     \smallskip\par\noindent{\centerline{\box\graph}} \medskip}
\begin{document}

\newtheorem{theorem}{Theorem}[section]
\newtheorem{lemma}[theorem]{Lemma}
\newtheorem{observation}[theorem]{Observation}
\newtheorem{corollary}[theorem]{Corollary}
\newtheorem{prop}[theorem]{Proposition}
\newtheorem{conj}[theorem]{Conjecture}
\newtheorem{claim}[theorem]{Claim}
\theoremstyle{definition}
\newtheorem{defn}[theorem]{Definition}
\newtheorem{remark}[theorem]{Remark}
\newtheorem{alg}[theorem]{Algorithm}
\def\qed{\ifhmode\unskip\nobreak\hfill$\Box$\bigskip\fi \ifmmode\eqno{Box}\fi}

\def\nul{\varnothing} 
\def\st{\colon\,}   
\def\VEC#1#2#3{#1_{#2},\ldots,#1_{#3}}
\def\VECOP#1#2#3#4{#1_{#2}#4\cdots #4 #1_{#3}}
\def\SE#1#2#3{\sum_{#1=#2}^{#3}} 
\def\PE#1#2#3{\prod_{#1=#2}^{#3}}
\def\UE#1#2#3{\bigcup_{#1=#2}^{#3}}
\def\CH#1#2{\binom{#1}{#2}} 
\def\FR#1#2{\frac{#1}{#2}}
\def\FL#1{\left\lfloor{#1}\right\rfloor} \def\FFR#1#2{\FL{\frac{#1}{#2}}}
\def\CL#1{\left\lceil{#1}\right\rceil}   \def\CFR#1#2{\CL{\frac{#1}{#2}}}
\def\Gb{\overline{G}}
\def\NN{{\mathbb N}} \def\ZZ{{\mathbb Z}} \def\QQ{{\mathbb Q}}
\def\RR{{\mathbb R}} \def\GG{{\mathbb G}} \def\FF{{\mathbb F}}

\def\B#1{{\bf #1}}      \def\R#1{{\rm #1}}
\def\I#1{{\it #1}}      \def\c#1{{\cal #1}}
\def\C#1{\left | #1 \right |}    
\def\P#1{\left ( #1 \right )}    
\def\ov#1{\overline{#1}}        \def\un#1{\underline{#1}}

\def\cD{{\mathcal D}}
\def\e{{\rm e}}
\def\la{\langle}
\def\ra{\rangle}
\def\symd{\kern-.1ex{\triangle}\kern.3ex}
\long\def\skipit#1{}


\title{$3$-Regular Graphs Are $2$-Reconstructible}

\author{
Alexandr V. Kostochka\thanks{University of Illinois at Urbana--Champaign,
Urbana IL 61801, and Sobolev Institute of Mathematics, Novosibirsk 630090,
Russia: \texttt{kostochk@math.uiuc.edu}.  Research supported in part by NSF
grants DMS-1600592 and grants 18-01-00353A and 19-01-00682  of the Russian
Foundation for Basic Research.}\,,
Mina Nahvi\thanks{University of Illinois at Urbana--Champaign,
Urbana IL 61801: \texttt{mnahvi2@illinois.edu}.}\,,
Douglas B. West\thanks{Zhejiang Normal University, Jinhua, China 321004
and University of Illinois at Urbana--Champaign, Urbana IL 61801:
\texttt{dwest@math.uiuc.edu}.
Research supported by National Natural Science Foundation of China grant NNSFC
11871439.}\,,
Dara Zirlin\thanks{University of Illinois at Urbana--Champaign, Urbana IL 61801:
\texttt{zirlin2@illinois.edu}.}
}

\date{\it Dedicated to Prof.\ Xuding Zhu on his 60th Birthday}
\maketitle

\baselineskip 16pt
\vspace{-2pc}

\begin{abstract}
A graph is $\ell$-reconstructible if it is determined by its multiset
of induced subgraphs obtained by deleting $\ell$ vertices.  We prove that
$3$-regular graphs are $2$-reconstructible.
\end{abstract}

\section{Introduction}
The {\it $k$-deck} of an $n$-vertex graph is the multiset of its $\CH nk$
induced subgraphs with $k$ vertices.  The famous Reconstruction Conjecture of
Ulam~\cite{Kel1,Ulam} asserts that when $n\ge3$, every $n$-vertex graph is
determined by its $(n-1)$-deck.  In 1957, Kelly~\cite{Kel2} extended the
conjecture, considering deletion of more than one vertex.  A graph or graph
property is {\it $\ell$-reconstructible} if it is determined by the deck
obtained by deleting $\ell$ vertices.  Kelly conjectured that for each $\ell$
there is a threshold $M_\ell$ such that every graph with at least $M_\ell$
vertices is $\ell$-reconstructible.

It is thought that perhaps $M_2=6$ (McMullen and Radziszowski~\cite{MR}
conjectured $M_\ell\le3\ell$).  Since the graph $C_4+K_1$ and the tree
$K_{1,3}'$ obtained by subdividing one edge of $K_{1,3}$ have the same
$3$-deck, $M_2\ge6$.  Spinoza and West~\cite{SW} showed that $P_{2\ell}$ and
$C_{\ell+1}+P_{\ell-1}$ have the same $\ell$-deck~\cite{SW}, and hence
$M_\ell\ge2\ell+1$.

Let $\cD_k(G)$ denote the $k$-deck of a graph $G$.  The elements of 
$\cD_k(G)$ are called {\it $k$-cards} or just {\it cards}.
Fix $n$ to be the number of vertices of the graph $G$ whose $k$-deck we
are given, so that $\C{\cD_k(G)}=\CH nk$.  Since
every $(k-1)$-card appears in exactly $n-k+1$ of the $k$-cards, always
$\cD_k(G)$ determines $\cD_{k-1}(G)$.
It is thus sensible to define the {\it reconstructibility} of a graph $G$
to be the maximum $\ell$ such that $G$ is $\ell$-reconstructible.
Spinoza and West~\cite{SW} determined the reconstructibility of all graphs
with maximum degree at most $2$.  They also showed that almost all graphs are
${(1-o(1))n/2}$-reconstructible, extending the observations
in~\cite{Bol,Chi,Mul} that almost all graphs are $1$-reconstructible.

Much research in graph reconstruction has focused on finding classes or
properties of graphs that are $1$-reconstructible.  In the spirit of 
Kelly's Conjecture, we ask what can be shown to be $\ell$-reconstructible
for larger $\ell$.  Initial attention has considered the degree list, which
trivially is $1$-reconstructible because the $2$-deck already determines the
number of edges.  Chernyak~\cite{Che} showed that the degree list is
$2$-reconstructible when $n\ge6$ (sharp by $\{C_4+K_1,K'_{1,3}\}$).  The
present authors~\cite{KNWZ} showed that the degree list is $3$-reconstructible
when $n\ge7$ (sharp by $\{C_5+K_1,K''_{1,3}\}$, where $K''_{1,3}$ is the tree
obtained from $K_{1,3}$ by subdividing two edges).  For $\ell$ in general,
Taylor~\cite{Tay} showed that the degree list is $\ell$-reconstructible when
$n\ge \e\ell+O(\log\ell)$, where $\e$ is the base of the natural logarithm.

In one of the first results on reconstruction, Kelly~\cite{Kel1} proved that
disconnected graphs are $1$-reconstructible.  Manvel~\cite{Man} proved that
disconnected graphs having no component with $n-1$ vertices are
$2$-reconstructible.  He also observed that $2$-reconstructibility of 
disconnected graphs consisting of one isolated vertex and a connected graph
is equivalent to the original Reconstruction Conjecture that all graphs are
$1$-reconstructible when $n\ge3$.  

In addition, Manvel~\cite{Man} showed that whether an $n$-vertex graph is
connected can be determined from its $(n-2)$-deck when $n\ge6$.  That is,
connectedness is $2$-reconstructible when $n\ge6$ (sharp by
$\{C_4+K_1,K_{1,3}'\}$).  The present authors~\cite{KNWZ} showed that
connectedness is $3$-reconstructible when $n\ge7$ (sharp by
$\{C_5+K_1,K_{1,3}''\}$).  Spinoza and West~\cite{SW} showed that connectedness
is $\ell$-reconstructible when $n>2\ell^{(\ell+1)^2}$ (this is not sharp).  

Since the degree list is $1$-reconstructible, regular graphs are
$1$-reconstructible (after determining that the deck arises only from
$r$-regular graphs, make the missing vertex adjacent to the $r$ vertices of
degree $r-1$ in any card).  At a meeting in Sanya in 2019, Bojan Mohar asked
whether regular graphs are $2$-reconstructible.  This is not immediate, even
though the degree list is $2$-reconstructible, because we must determine which
of the deficient vertices is adjacent to which of the two missing vertices.
In this paper, we prove the following result.

\begin{theorem}\label{main}
Every $3$-regular graph is $2$-reconstructible.
\end{theorem}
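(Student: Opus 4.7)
Suppose $G$ and $H$ are $3$-regular graphs on $n$ vertices with $\cD_{n-2}(G)=\cD_{n-2}(H)$; the goal is to derive $G\cong H$. The plan is first to reduce to the connected case. Every component of a $3$-regular graph has even order, so a $3$-regular graph has no component on $n-1$ vertices, and hence by Manvel's theorem cited above every disconnected $3$-regular graph is $2$-reconstructible. Since connectedness itself is $2$-reconstructible for $n\ge6$, whether $G$ is connected is determined by $\cD_{n-2}(G)$, so I may assume both $G$ and $H$ are connected. Next I would harvest the structural information handed to us by Kelly's Lemma: the $(n-2)$-deck determines the number of copies of every graph on at most $n-3$ vertices. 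So $G$ and $H$ share the counts of all small subgraphs---edges, triangles, $4$-cycles, $K_{1,3}$s, $P_k$ for small $k$, $K_4$s, and so on---which, combined with $3$-regularity, is very restrictive.

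Second, I would read off as much as possible locally from each card. For $C=G-\{u,v\}$, the edge count is $\frac{3n}{2}-5$ if $uv$ is an edge and $\frac{3n}{2}-6$ otherwise; the vertices of $C$ of degree less than $3$ are exactly those of $(N(u)\cup N(v))\setminus\{u,v\}$, and such a vertex has degree-deficit $2$ exactly when it is a common neighbor of $u$ and $v$. Thus each card already reveals whether the deleted pair is an edge, how many common neighbors it has, and which vertices of the card are deficient. The only local ambiguity that remains is how to split the deficient vertices into $N(u)$ versus $N(v)$ and, in the non-edge case, how the two vertices $u,v$ are glued back in without sharing an edge.

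The heart of the proof, and the main obstacle I expect, is the global assembly: ruling out that two non-isomorphic connected $3$-regular graphs produce the same deck. My plan is to pick a convenient card $C_0$---ideally one whose deleted pair is forced by the deck to have special local structure, for instance lying in a triangle or having the maximum number of common neighbors---and enumerate all $3$-regular completions obtained by reattaching two vertices consistently with the positions and deficits recorded in $C_0$. Each such completion $G^{*}$ must reproduce the whole multiset $\cD_{n-2}(G)$, and the task is to show that any two such completions are isomorphic. The delicate point is ruling out ``local swaps''---two non-isomorphic ways of reattaching the missing pair, or more globally exchanging a pair of edges, that leave every card unchanged---which I expect to do by playing the Kelly counts of small subgraphs against the patterns seen in other specific cards. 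Configurations with high local symmetry (pairs with two common neighbors, girth-$3$ vertices, and graphs such as the prism, $K_{3,3}$, and the Petersen graph) are where the case analysis is likely to be most delicate and where small-$n$ exceptions, if any, must be checked by hand.
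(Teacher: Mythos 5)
There is a genuine gap: your plan reproduces the paper's preliminaries but stops exactly where the real proof begins. The reduction to connected graphs, the use of Kelly's Lemma for subgraph counts, and the local reading of each card (edge vs.\ non-edge, common neighbors, which vertices are deficient) all appear in the paper's Section 2 in essentially the form you describe. But the ``heart of the proof'' paragraph is a statement of intent, not an argument. ``Pick a convenient card, enumerate the $3$-regular completions, and rule out local swaps by playing Kelly counts against other cards'' is precisely the problem, restated; you give no mechanism for why two non-isomorphic completions that agree on all small subgraph counts cannot both reproduce the full deck. Verifying that each completion reproduces the deck does not by itself show the completions are pairwise isomorphic, and a direct enumeration from a single card is not obviously finite or tractable. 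The paper instead argues by contradiction with a long bootstrapping chain: assuming two non-isomorphic reconstructions exist, it first forces girth at least $5$, then proves roughly ten successive lemmas forbidding configurations of girth-cycles (two $g$-cycles sharing two consecutive edges, a vertex on three $g$-cycles, then on two, an edge joining two $g$-cycles, a $g$-cycle and a $(g{+}1)$-cycle sharing an edge, \dots), each proved by examining the unique alternative reconstruction from a carefully chosen card and invoking the \emph{earlier} lemmas as properties that the alternative reconstruction must also satisfy (Observation~\ref{altsame}). The final two lemmas --- no $g$-cycle shares an edge with a $(g{+}1)$-cycle, yet every $g$-cycle must share an edge with some $(g{+}1)$-cycle --- contradict each other. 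None of this structure is present or foreshadowed in your proposal.

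A secondary misdirection: you locate the expected difficulty in highly symmetric small-girth configurations (triangles, the prism, $K_{3,3}$). In fact girth $3$ and $4$ are the easy cases --- deleting two vertices of a short cycle leaves degree-$1$ vertices whose reattachment is forced, which is how the paper disposes of them in Lemma~\ref{girth5} --- and the entire difficulty lives in girth at least $5$, where the four deficient vertices of a card admit three pairings and global cycle-counting arguments are needed to eliminate the wrong ones. So the proposal is not wrong so much as incomplete: the step you defer is the theorem.
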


A useful property of $3$-regular graphs not shared by regular graphs of higher
degree is that any two cycles through a vertex have a common edge.  Lacking
this property, it seems difficult to extend our approach to regular graphs of
higher degree.

\section{Preliminaries}

Let $\cD$ be the $(n-2)$-deck of a $3$-regular graph with $n$ vertices
(henceforth we simply say {\it deck} for the $(n-2)$-deck).
A {\it reconstruction} (from $\cD$) is an $n$-vertex graph whose deck is $\cD$.
Since $K_4$ is determined by its $2$-deck and $n$ must be even, we may assume
$n\ge6$.  Now the $2$-reconstructibility of the degree list implies that every
reconstruction is $3$-regular.

We aim to prove that there is only one possible reconstruction, or equivalently
that all reconstructions are isomorphic.  To do this, we will restrict the
properties of an arbitrary reconstruction from a deck that has nonisomorphic
reconstructions.  We will repeatedly (often implicitly) use the following
trivial observation.

\begin{observation}\label{altsame}
If $H$ is an alternative reconstruction from a card in the deck of a 
$3$-regular graph $G$, then $H$ satisfies all properties that have been
shown to hold for every reconstruction from a deck that has more than one
$3$-regular reconstruction.
\end{observation}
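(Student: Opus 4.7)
The plan is simply to unfold the definitions. An ``alternative reconstruction'' $H$ from the deck $\cD$ of $G$ means, by the conventions just fixed in this section, a graph with $\cD_{n-2}(H)=\cD$ that is not isomorphic to $G$. The first step I would take is to observe that $\cD$ therefore has at least two non-isomorphic reconstructions, namely $G$ and $H$, and so $\cD$ itself is ``a deck that has more than one $3$-regular reconstruction.'' The preceding paragraph has already noted, using the $2$-reconstructibility of the degree list, that every reconstruction from $\cD$ must be $3$-regular, so in particular $H$ is a $3$-regular reconstruction from such a deck.

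The second step is a one-line instantiation. Any previously established statement of the form ``every $3$-regular reconstruction from a deck that has more than one $3$-regular reconstruction satisfies property $P$'' can be applied with $\cD$ in the role of the deck (Step~1 verifies its hypothesis) and with $H$ in the role of the reconstruction, yielding $P(H)$. The content of the observation is thus a bookkeeping device: it certifies that later arguments, which repeatedly derive properties of a hypothetical non-unique reconstruction, may be invoked for $H$ without re-proof.

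I do not expect any real obstacle, since the observation has no combinatorial content beyond unwinding the definitions of ``reconstruction'' and ``alternative reconstruction.'' The only care needed when the observation is applied downstream is to confirm each time that the earlier lemma being cited is indeed phrased with a universal quantifier over reconstructions from decks whose number of reconstructions exceeds one. This is the only regime that matters for Theorem~\ref{main}, since if the deck has a unique reconstruction there is nothing further to prove.
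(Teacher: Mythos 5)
Your proposal is correct and matches the paper, which offers no formal proof beyond the remark that $H$ witnesses $G$ being a counterexample to Theorem~\ref{main} --- i.e., exactly your Step~1 that $G$ and $H$ are two nonisomorphic $3$-regular reconstructions of the same deck, so every universally quantified property established for such decks applies to $H$. Your unfolding of the definitions, including the note that ``reconstruction from a card'' still requires the completed graph to have deck $\cD$, is the intended reading.
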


Here $H$ witnesses that $G$ is a counterexample to Theorem~\ref{main}.  
Our first restriction on the properties of such a graph $G$ arose in discussion
with Martin Merker, Bojan Mohar, and Hehui Wu.
Let a {\it $j$-vertex} be a vertex of degree $j$.

\begin{lemma}\label{girth5}
Given a card obtained by deleting adjacent vertices of $G$, in every
reconstruction the missing vertices are adjacent.  Given a card obtained by
deleting vertices with a common neighbor, in every reconstruction the missing
vertices are both adjacent to that common neighbor.  Finally, every
reconstruction has girth at least $5$.
\end{lemma}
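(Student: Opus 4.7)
Parts 1 and 2 follow from edge and degree counting. Both $G$ and any reconstruction $H$ are $3$-regular on $n$ vertices, so $\C{E(G)}=\C{E(H)}=3n/2$. If $C=G-\{u,v\}=H-\{u',v'\}$, then the numbers of edges removed from the two graphs to obtain $C$ must agree, and each equals $\deg(u)+\deg(v)-[uv\in E]=6-[uv\in E]$; hence $uv\in E(G)$ iff $u'v'\in E(H)$, proving part 1. For part 2, any common neighbor $w$ of $u$ and $v$ in $G$ lies in $V(C)$ with $\deg_C(w)=1$, since its three neighbors in $G$ are $u$, $v$, and one surviving vertex. In $H$ the same vertex $w$ has $\deg_H(w)=3$ and $\deg_C(w)=1$, so $w$ has lost exactly two neighbors, and these can only be $u'$ and $v'$.

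For part 3 I argue by contradiction. Suppose some reconstruction of $\cD$ has girth at most $4$; by Observation~\ref{altsame} we may take $G$ itself to have girth at most $4$, and the goal becomes to show $G$'s only reconstruction is itself. The plan is to delete two vertices of a shortest cycle of $G$ and use parts 1 and 2 to determine all six edge-slots at the missing pair $\{a,b\}$ in any reconstruction $H$. Suppose first that $G$ has a triangle $xyz$ and set $C=G-\{y,z\}$, so $\deg_C(x)=1$. Parts 1 and 2 give $ab\in E(H)$ and $\{xa,xb\}\subset E(H)$, using four of the six slots. The remaining two slots must absorb the rest of the degree deficit in $C$, which is concentrated at the third neighbors $y',z'$ of $y$ and $z$: if $y'\ne z'$ these are two distinct degree-$2$ vertices and either matching of $\{a,b\}$ with $\{y',z'\}$ recovers $G$, while if $y'=z'$ then $y'$ is a second common neighbor of $y,z$ with $\deg_C(y')=1$, and part 2 applied to $y'$ forces $y'a,y'b\in E(H)$, still recovering $G$.

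The $4$-cycle case is analogous. If $G$ is triangle-free and contains a $4$-cycle $xyzw$, take $C=G-\{y,w\}$; then $y$ and $w$ are non-adjacent with common neighbors $x$ and $z$, each of degree $1$ in $C$. Parts 1 and 2 force $a$ and $b$ to be non-adjacent with $\{xa,xb,za,zb\}\subset E(H)$, filling four slots, while the remaining two slots attach $a$ and $b$ to the third neighbors $y_2,w_2$ of $y$ and $w$. The only nontrivial coincidence permitted by triangle-freeness is $y_2=w_2$, which triggers a further application of part 2, and in every sub-case $H\cong G$. The main technical burden is this sub-case analysis: one must enumerate how the third neighbors of the deleted pair can coincide or share edges, and verify that parts 1 and 2 always leave no freedom in $H$ beyond a relabeling of the missing pair.
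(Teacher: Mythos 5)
Your proof is correct and follows essentially the same route as the paper: degree/edge counting gives the first two parts, and deleting two vertices of a triangle (or two opposite vertices of a $4$-cycle) forces every reconstruction up to swapping the two missing vertices. You are in fact slightly more careful than the paper in tracking the coincidence sub-cases (e.g.\ the deleted pair having a third common neighbor), which the paper's statement of the card's degree profile quietly glosses over.
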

\begin{proof}
The first two remarks hold because every reconstruction from the deck is
$3$-regular.

If $G$ has a triangle $T$, then a card obtained by deleting two vertices of $T$
has two $1$-vertices or has one $1$-vertex and two $2$-vertices.  In a
$3$-regular reconstruction, the two missing vertices must be adjacent and must
both be adjacent to any $1$-vertex.  If the card has two $2$-vertices, then the
$2$-vertices must each be adjacent to one missing vertex.  The two
reconstructions are isomorphic, preventing $H\not\cong G$.

If $G$ has a $4$-cycle (and no triangle), then a card obtained by deleting two
nonadjacent vertices on a $4$-cycle has two $1$-vertices and two $2$-vertices.
In a $3$-regular reconstruction, the two missing vertices must both be adjacent
to both $1$-vertices, and each must be adjacent to one of the $2$-vertices.
The two reconstructions are isomorphic, preventing $H\not\cong G$.
\end{proof}

With girth at least $5$, we have $n\ge10$.
Next we note the analogue of Kelly's Lemma~\cite{Kel2}.

\begin{lemma}\label{subcount}
For each graph $F$ with at most $n-\ell$ vertices, the number of subgraphs of 
$G$ isomorphic to $F$ is $\ell$-reconstructible.  In particular, the number
of cycles of any length at most $n-2$ is $2$-reconstructible.
\end{lemma}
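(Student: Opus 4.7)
The plan is to mimic Kelly's original double-counting proof of his analogous lemma for $\ell=1$ reconstruction. Let $s(F,H)$ denote the number of subgraphs of a graph $H$ isomorphic to a fixed graph $F$, and let $k=\C{V(F)}$. I will evaluate the sum
$$S \;=\; \sum_{C\in\cD_{n-\ell}(G)} s(F,C)$$
in two different ways, thereby pinning down $s(F,G)$ from the deck alone.

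On one hand, $S$ is manifestly determined by $\cD_{n-\ell}(G)$: every card $C$ is a concrete graph on $n-\ell$ vertices, and since $k\le n-\ell$, the number $s(F,C)$ can be read off directly from $C$. On the other hand, I would count the contribution of a fixed copy of $F$ in $G$. Such a copy occupies some $k$-vertex subset $U\subseteq V(G)$, and appears in the card obtained by deleting any $\ell$-element subset of the remaining $n-k$ vertices. Hence each of the $s(F,G)$ copies is counted exactly $\CH{n-k}{\ell}$ times, giving
$$S \;=\; \CH{n-k}{\ell}\, s(F,G).$$
Because $k\le n-\ell$, the binomial coefficient $\CH{n-k}{\ell}$ is strictly positive, so dividing recovers $s(F,G)$ from the deck.

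The ``in particular'' assertion then follows by applying the formula with $F=C_j$ for each $j$ with $3\le j\le n-2$: a (not necessarily induced) subgraph of $G$ isomorphic to $C_j$ is exactly a cycle of length $j$ in $G$, so the count of such cycles is $2$-reconstructible.

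There is essentially no obstacle here; the argument is a routine transcription of Kelly's Lemma to the $\ell$-deck setting. The only point requiring any care is the bookkeeping --- making sure ``subgraph'' is used consistently (so that each copy of $F$ is weighted the same way in both sides of the identity) and verifying the hypothesis $k\le n-\ell$ that keeps $\CH{n-k}{\ell}$ nonzero.
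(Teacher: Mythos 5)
Your argument is correct and is exactly the paper's proof: the paper's one-line justification ("each copy of $F$ appears in exactly $\CH{n-|V(F)|}{\ell}$ cards") is precisely the double count you carry out in detail, including the observation that $\CH{n-k}{\ell}>0$ when $k\le n-\ell$. Nothing further is needed.
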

\begin{proof}
Each copy of $F$ appears in exactly $\CH{n-\C{V(F)}}\ell$ cards.
\end{proof}

A elementary exercise states that every $n$-vertex graph with at
least $n+1$ edges has girth at most $\CL{2n/3}$.  With $\CL{2n/3}\le n-3$
when $n\ge9$, Lemma~\ref{subcount} yields the following.

\begin{corollary}\label{girth}
Every reconstruction has the same girth $g$, the same number of $g$-cycles,
and the same number of $(g+1)$-cycles.
\hfill$\square$
\end{corollary}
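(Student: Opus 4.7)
The plan is to observe that this corollary is essentially immediate from Lemma~\ref{subcount} combined with the elementary girth bound stated just before it. First I would verify that in our setting both $g$ and $g+1$ fall within the range where Lemma~\ref{subcount} applies. Since any reconstruction is $3$-regular on $n$ vertices, it has $3n/2$ edges, which far exceeds $n+1$ once $n\ge 10$, so the elementary exercise guarantees girth at most $\CL{2n/3}$. For $n\ge 9$ one checks $\CL{2n/3}\le n-3$, so for every reconstruction $g\le n-3$ and hence $g+1\le n-2$.

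Next I would apply Lemma~\ref{subcount} with $\ell=2$ and $F=C_k$: since $\C{V(C_k)}=k\le n-2$ for each $k$ in the relevant range, the number of $k$-cycles in $G$ is $2$-reconstructible for every $k\le n-2$. In particular, once we know $g$, the counts of $g$-cycles and $(g+1)$-cycles are the same in every reconstruction.

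To see that $g$ itself is the same for every reconstruction, I would note that $g$ is simply the smallest $k$ for which the reconstructible count of $k$-cycles is positive, and the bound $g\le n-3$ ensures that this smallest $k$ lies in the range where Lemma~\ref{subcount} delivers the count. The main point requiring care is confirming the two numerical inequalities $3n/2\ge n+1$ and $\CL{2n/3}\le n-3$ for $n\ge 10$; there is no further combinatorial obstacle, since the substantive work is already done by Kelly-style subgraph counting in Lemma~\ref{subcount}.
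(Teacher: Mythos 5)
Your proposal is correct and follows the paper's own route exactly: the $3$-regular reconstruction has $3n/2\ge n+1$ edges, so the elementary bound gives $g\le\CL{2n/3}\le n-3$ for $n\ge9$, and Lemma~\ref{subcount} with $\ell=2$ then determines the cycle counts for all lengths up to $n-2$, including $g$ and $g+1$. Nothing is missing.
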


Let $d_G(x,y)$ denote the distance between $x$ and $y$ in $G$.
\begin{lemma}\label{twopair}
Let $\cD$ be the deck of a $3$-regular graph $G$ with another reconstruction.
Fix $F=G-\{x,y\}\in\cD$.  If $d_G(x,y)=1$, then $F$ has four $2$-vertices.  If
$d_G(x,y)=2$, then $F$ has one $1$-vertex and four $2$-vertices.  Also, we can
recognize when $d_G(x,y)$ is $1$ or $2$ or larger.
\end{lemma}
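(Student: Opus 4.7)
The plan is to compute, for each of the three distance cases, exactly which vertices of $F=G-\{x,y\}$ have degree less than $3$, and then observe that the resulting counts of $1$-vertices and $2$-vertices distinguish the three cases. Since $G$ is $3$-regular, a vertex $v\in V(F)$ has degree $3-|N_G(v)\cap\{x,y\}|$ in $F$, so the $1$- and $2$-vertices of $F$ are precisely the vertices in $N_G(\{x,y\})\setminus\{x,y\}$, with $v$ being a $1$-vertex exactly when it is adjacent to both of $x$ and $y$.

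I would treat the three cases in turn, invoking Lemma~\ref{girth5} (girth at least $5$) at each step. If $d_G(x,y)=1$, then $x$ and $y$ each have two further neighbors; girth $\ge 5$ forbids both triangles and $4$-cycles, so none of these four neighbors can coincide, and each becomes a $2$-vertex in $F$. If $d_G(x,y)=2$, there is a common neighbor $z$ of $x$ and $y$, and girth $\ge 5$ rules out a second one (which would give a $C_4$); thus $z$ is the unique $1$-vertex of $F$, while the two remaining neighbors of $x$ together with the two remaining neighbors of $y$ form four additional vertices, pairwise distinct (again by the girth condition) and each a $2$-vertex. If $d_G(x,y)\ge 3$, then $x$ and $y$ have no common neighbor, so their six neighbors are distinct and each becomes a $2$-vertex, yielding no $1$-vertex.

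The three pairs (number of $1$-vertices, number of $2$-vertices) are $(0,4)$, $(1,4)$, and $(0,6)$, which are pairwise distinct; hence simply reading off these counts from $F$ identifies which case $d_G(x,y)$ falls into. The only real care in the argument lies in the repeated girth-$5$ bookkeeping used to keep the low-degree vertices distinct, and I do not expect any substantial obstacle beyond that.
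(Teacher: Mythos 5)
Your proof is correct and follows the same route as the paper: both deduce the degree counts from $3$-regularity together with the girth-$\ge 5$ guarantee of Lemma~\ref{girth5}, and both distinguish the three distance cases by the resulting signatures $(0,4)$, $(1,4)$, $(0,6)$. The paper merely states this in two sentences where you spell out the bookkeeping.
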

\begin{proof}
The degree claims follow from $G$ being $3$-regular with girth at least $5$.
Since $F$ has six $2$-vertices when $d_G(x,y)>2$, we can recognize $d_G(x,y)$
being $1$ or $2$ or greater than $2$.
\end{proof}

We will usually consider cards in which the deleted vertices are at distance
at most $2$ and lie together on a shortest cycle.  We say that two $2$-vertices
in a card $F$ are {\it paired} in a reconstruction from $F$ when they have one
of the missing vertices as a common neighbor.

\begin{lemma}\label{shortest}
If $d_G(x,y)\le2$ and $x$ and $y$ both lie on a shortest cycle $C$ in $G$ (with
$g=\C{V(C)}$), then $G-\{x,y\}$ has only one alternative reconstruction, $H$.
In $H$, the missing vertices $x'$ and $y'$ complete a copy $C'$ of $C$ obtained
by substituting $x'$ for $x$ and $y'$ for $y$.  If also $xy\in E(C)$, then the
number of $g$-cycles using one or both of $\{x,',y'\}$ in $H$ is the same as
the number of $g$-cycles using one or both of $\{x,y\}$ in $G$, respectively.
\end{lemma}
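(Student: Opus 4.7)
I plan to handle both cases $d_G(x,y)=1$ (with $xy\in E(C)$) and $d_G(x,y)=2$ in parallel by identifying the degree-deficient vertices of $F=G-\{x,y\}$ and classifying the ways the two missing vertices can be added back. In Case~1 I label $C$ as $xyvw_1\cdots w_{g-4}ux$; in Case~2 I first observe that the common neighbor $z$ of $x,y$ must lie on $C$ (otherwise the length-$2$ path $xzy$ combined with the shorter arc of $C$ would close a cycle shorter than $g$, since $g\ge5$), and then label $C$ as $xzyvw_1\cdots w_{g-5}ux$. Letting $a,b$ be the non-$C$ third neighbors of $x,y$, the fact that a shortest cycle has no chord places $a,b\notin V(C)$, and then girth at least $5$ forces $u,v,a,b$ to be four distinct vertices. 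These are exactly the degree-$2$ vertices of $F$, together with the degree-$1$ vertex $z$ in Case~2.

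In any reconstruction $H$, Lemma~\ref{twopair} forces $x'y'\in E(H)$ in Case~1 and $\{x'z,y'z\}\subseteq E(H)$ in Case~2. To restore $3$-regularity, the remaining two neighbors of each of $x'$ and $y'$ must come from $\{u,v,a,b\}$, so they partition this set into two pairs. Up to the labeling swap $x'\leftrightarrow y'$, there are exactly three such partitions: $\{\{u,a\},\{v,b\}\}$ (which recovers $G$), $\{\{u,b\},\{v,a\}\}$ (the candidate alternative), and the diagonal $\{\{u,v\},\{a,b\}\}$. I rule out the diagonal by a short-cycle check: closing the $u$-$v$ path of $F$ (of length $g-3$ in Case~1, or $g-4$ in Case~2) through $x'$ produces a cycle of length $g-1$ or $g-2$, contradicting that $H$ has girth $g$ by Corollary~\ref{girth}. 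In the surviving alternative, tracing $x'uw_{g-4}\cdots w_1vy'$ and returning through the edge $y'x'$ (Case~1) or the path $y'zx'$ (Case~2) yields a $g$-cycle, precisely the copy $C'$ of $C$ with $x',y'$ substituted for $x,y$.

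For the cycle-count statement in Case~1, I use Corollary~\ref{girth} to equate the total $g$-cycle counts of $G$ and $H$, then subtract the common count of $g$-cycles contained entirely in $F$ to match the numbers of $g$-cycles using at least one of $\{x,y\}$ in $G$ and of $\{x',y'\}$ in $H$. To separate this into the ``through both'' and ``through exactly one'' counts, I plan to use that any $g$-cycle through both endpoints of an edge in a graph of girth $g\ge5$ must include that edge (otherwise adjoining the edge splits the union into two cycles of total length $g+2$, one shorter than $g$); this reduces each sub-count to a count of paths of fixed length in $F$ with endpoints in specified subsets of $\{u,v,a,b\}$, which the partition structure identified above pairs up correctly. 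I expect the main technical obstacle to be keeping the case analysis uniform: the girth check killing the diagonal partition works only because the ``bad'' cycle lengths $g-1$ and $g-2$ stay strictly below the known girth $g$, which is guaranteed by $g\ge5$.
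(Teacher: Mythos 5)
Your treatment of the first two assertions (uniqueness of the alternative reconstruction and the existence of the copy $C'$) is essentially the paper's argument: identify the four $2$-vertices of the card, observe that any $3$-regular reconstruction partitions them into two pairs of new neighbors, and kill the diagonal partition $\{\{u,v\},\{a,b\}\}$ because closing the $u,v$-arc of $C$ through a single new vertex yields a cycle of length $g-1$ or $g-2$, contradicting Corollary~\ref{girth}. Your explicit observation that in the distance-$2$ case the common neighbor $z$ must lie on $C$ (and hence that the long $u,v$-arc has length $g-4$) is correct and is a detail the paper leaves implicit.

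The gap is in the final cycle-count assertion. After reducing the number of $g$-cycles through both missing vertices to counts of $(g-3)$-paths in $F$, the ``specified subsets'' do \emph{not} pair up as you claim: in $G$ such a cycle corresponds to a $(g-3)$-path with one endpoint in $\{u,a\}$ and the other in $\{v,b\}$, while in $H$ it corresponds to a $(g-3)$-path with one endpoint in $\{u,b\}$ and the other in $\{v,a\}$. The two families agree on the endpoint types $(u,v)$ and $(a,b)$ but disagree on the cross types: $(u,b)$ and $(a,v)$ are admissible only for $G$, and $(u,a)$ and $(v,b)$ only for $H$. The counts are nevertheless equal, but only because every cross type is empty: a $(g-3)$-path in $F$ between two vertices that share a new neighbor in one of the two reconstructions would close a $(g-1)$-cycle there, contradicting that both $G$ and $H$ have girth $g$ by Corollary~\ref{girth}. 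This is exactly the paper's argument, phrased as: for any $g$-cycle of $G$ through the edge $xy$, the two path-endpoints must be assigned to \emph{distinct} new vertices in every reconstruction, so each such cycle survives as a $g$-cycle through $x'y'$ in $H$, and symmetrically in the other direction; this bijection gives equality of the ``both'' counts, and subtracting from the reconstructible total number of $g$-cycles (less those lying entirely in $F$) gives equality of the ``exactly one'' counts. Without this extra step your plan does not establish the equality, and since later lemmas in the paper rely on precisely this part of the statement, the step needs to be made explicit.
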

\begin{proof}
When $d_G(x,y)\le2$, the four $2$-vertices in $G-\{x,y\}$ must form two pairs
in any reconstruction: two neighbors of $x'$ and two neighbors of $y'$.  There
are three ways to pair four vertices.  However, two of those $2$-vertices lie
on $C$, with the path joining them through $x$ and $y$ having length $3$ or
$4$.  Pairing them as neighbors of one of $\{x',y'\}$ creates a shorter cycle.
Since Lemma~\ref{subcount} provides the girth of $G$, this alternative pairing
is forbidden, leaving only $G$ and one alternative.

If $xy\in E(G)$, then in any reconstruction the two vertices that were adjacent
to $x$ and $y$ on any shortest $C$ must each be adjacent to one of $\{x',y'\}$
(and not the same one); otherwise a shorter cycle is formed (see
Figure~\ref{gcycle}).  Hence in every reconstruction from $G-\{x,y\}$ the
number of shortest cycles that use both missing vertices is the same.

Since we know the number of $g$-cycles in $G-\{x,y\}$, we know the number of
shortest cycles that were destroyed.  Hence we now also know the number of
$g$-cycles that use exactly one of the two missing vertices.
\end{proof}

\vspace{-1pc}

\begin{figure}[hbt]
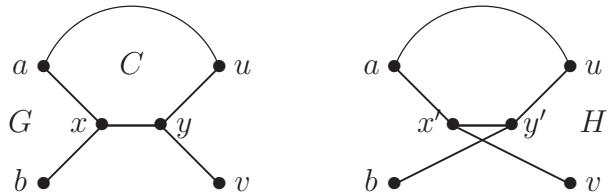

\gpic{
\expandafter\ifx\csname graph\endcsname\relax \csname newbox\endcsname\graph\fi
\expandafter\ifx\csname graphtemp\endcsname\relax \csname newdimen\endcsname\graphtemp\fi
\setbox\graph=\vtop{\vskip 0pt\hbox{%
    \graphtemp=.5ex\advance\graphtemp by 0.625in
    \rlap{\kern 0.429in\lower\graphtemp\hbox to 0pt{\hss $\bu$\hss}}%
    \graphtemp=.5ex\advance\graphtemp by 0.625in
    \rlap{\kern 0.735in\lower\graphtemp\hbox to 0pt{\hss $\bu$\hss}}%
    \graphtemp=.5ex\advance\graphtemp by 0.932in
    \rlap{\kern 0.122in\lower\graphtemp\hbox to 0pt{\hss $\bu$\hss}}%
    \graphtemp=.5ex\advance\graphtemp by 0.932in
    \rlap{\kern 1.041in\lower\graphtemp\hbox to 0pt{\hss $\bu$\hss}}%
    \graphtemp=.5ex\advance\graphtemp by 0.319in
    \rlap{\kern 0.122in\lower\graphtemp\hbox to 0pt{\hss $\bu$\hss}}%
    \graphtemp=.5ex\advance\graphtemp by 0.319in
    \rlap{\kern 1.041in\lower\graphtemp\hbox to 0pt{\hss $\bu$\hss}}%
    \special{pn 11}%
    \special{pa 429 625}%
    \special{pa 735 625}%
    \special{fp}%
    \special{pa 429 625}%
    \special{pa 122 932}%
    \special{fp}%
    \special{pa 429 625}%
    \special{pa 122 319}%
    \special{fp}%
    \special{pa 735 625}%
    \special{pa 429 625}%
    \special{fp}%
    \special{pa 735 625}%
    \special{pa 1041 932}%
    \special{fp}%
    \special{pa 735 625}%
    \special{pa 1041 319}%
    \special{fp}%
    \special{pn 8}%
    \special{ar 582 490 490 490 -2.786171 -0.355421}%
    \graphtemp=.5ex\advance\graphtemp by 0.319in
    \rlap{\kern 0.582in\lower\graphtemp\hbox to 0pt{\hss $C$\hss}}%
    \graphtemp=.5ex\advance\graphtemp by 0.625in
    \rlap{\kern 0.000in\lower\graphtemp\hbox to 0pt{\hss $G$\hss}}%
    \graphtemp=.5ex\advance\graphtemp by 0.625in
    \rlap{\kern 0.306in\lower\graphtemp\hbox to 0pt{\hss $x$\hss}}%
    \graphtemp=.5ex\advance\graphtemp by 0.932in
    \rlap{\kern 0.000in\lower\graphtemp\hbox to 0pt{\hss $b$\hss}}%
    \graphtemp=.5ex\advance\graphtemp by 0.319in
    \rlap{\kern 0.000in\lower\graphtemp\hbox to 0pt{\hss $a$\hss}}%
    \graphtemp=.5ex\advance\graphtemp by 0.625in
    \rlap{\kern 0.857in\lower\graphtemp\hbox to 0pt{\hss $y$\hss}}%
    \graphtemp=.5ex\advance\graphtemp by 0.932in
    \rlap{\kern 1.163in\lower\graphtemp\hbox to 0pt{\hss $v$\hss}}%
    \graphtemp=.5ex\advance\graphtemp by 0.319in
    \rlap{\kern 1.163in\lower\graphtemp\hbox to 0pt{\hss $u$\hss}}%
    \graphtemp=.5ex\advance\graphtemp by 0.625in
    \rlap{\kern 2.265in\lower\graphtemp\hbox to 0pt{\hss $\bu$\hss}}%
    \graphtemp=.5ex\advance\graphtemp by 0.625in
    \rlap{\kern 2.571in\lower\graphtemp\hbox to 0pt{\hss $\bu$\hss}}%
    \graphtemp=.5ex\advance\graphtemp by 0.932in
    \rlap{\kern 1.959in\lower\graphtemp\hbox to 0pt{\hss $\bu$\hss}}%
    \graphtemp=.5ex\advance\graphtemp by 0.932in
    \rlap{\kern 2.878in\lower\graphtemp\hbox to 0pt{\hss $\bu$\hss}}%
    \graphtemp=.5ex\advance\graphtemp by 0.319in
    \rlap{\kern 1.959in\lower\graphtemp\hbox to 0pt{\hss $\bu$\hss}}%
    \graphtemp=.5ex\advance\graphtemp by 0.319in
    \rlap{\kern 2.878in\lower\graphtemp\hbox to 0pt{\hss $\bu$\hss}}%
    \special{pn 11}%
    \special{pa 2265 625}%
    \special{pa 2571 625}%
    \special{fp}%
    \special{pa 2265 625}%
    \special{pa 2878 932}%
    \special{fp}%
    \special{pa 2265 625}%
    \special{pa 1959 319}%
    \special{fp}%
    \special{pa 2571 625}%
    \special{pa 2265 625}%
    \special{fp}%
    \special{pa 2571 625}%
    \special{pa 1959 932}%
    \special{fp}%
    \special{pa 2571 625}%
    \special{pa 2878 319}%
    \special{fp}%
    \special{pn 8}%
    \special{ar 2418 490 490 490 -2.786171 -0.355421}%
    \graphtemp=.5ex\advance\graphtemp by 0.319in
    \rlap{\kern 2.418in\lower\graphtemp\hbox to 0pt{\hss $~$\hss}}%
    \graphtemp=.5ex\advance\graphtemp by 0.625in
    \rlap{\kern 3.000in\lower\graphtemp\hbox to 0pt{\hss $H$\hss}}%
    \graphtemp=.5ex\advance\graphtemp by 0.625in
    \rlap{\kern 2.143in\lower\graphtemp\hbox to 0pt{\hss $x'$\hss}}%
    \graphtemp=.5ex\advance\graphtemp by 0.932in
    \rlap{\kern 1.837in\lower\graphtemp\hbox to 0pt{\hss $b$\hss}}%
    \graphtemp=.5ex\advance\graphtemp by 0.319in
    \rlap{\kern 1.837in\lower\graphtemp\hbox to 0pt{\hss $a$\hss}}%
    \graphtemp=.5ex\advance\graphtemp by 0.625in
    \rlap{\kern 2.694in\lower\graphtemp\hbox to 0pt{\hss $y'$\hss}}%
    \graphtemp=.5ex\advance\graphtemp by 0.932in
    \rlap{\kern 3.000in\lower\graphtemp\hbox to 0pt{\hss $v$\hss}}%
    \graphtemp=.5ex\advance\graphtemp by 0.319in
    \rlap{\kern 3.000in\lower\graphtemp\hbox to 0pt{\hss $u$\hss}}%
    \hbox{\vrule depth1.054in width0pt height 0pt}%
    \kern 3.000in
  }%
}%
}
\vspace{-1pc}
\caption{An edge on a shortest cycle.}\label{gcycle}
\end{figure}

\section{Configurations of Short Cycles}

Our approach to prohibiting $3$-regular graphs with alternative reconstructions
is to prohibit short cycles with common or adjacent vertices in such graphs.
With $g$ being the girth of $G$ (already $g\ge5$ by Lemma~\ref{girth5}), we
will eventually forbid having two $g$-cycles sharing an edge or connected by an
edge, and we will forbid having a $g$-cycle and a $g'$-cycle sharing an edge,
where henceforth $g'=g+1$.  These exclusions lead to a final contradiction,
because we will also show that a $g$-cycle must share an edge with some
$g'$-cycle.

Throughout this section, $G$ is a $3$-regular $n$-vertex graph whose deck
($(n-2)$-deck) $\cD$ is also the deck of some $3$-regular graph $H$ not
isomorphic to $G$.  The statements we prove restrict the structure of an
arbitrary reconstruction $G$ from $\cD$, but once proved they hold also for
an alternative reconstruction $H$ in all subsequent steps, as formalized in
Lemma~\ref{altsame}.  Hence we do not mention $G$ in the statements of the
lemmas.  Also, when some reconstruction has the assumed property, we can always
find a card as described by looking at all reconstructions from each card in
the given deck.

\begin{lemma}\label{step0}
Two $g$-cycles cannot share two consecutive edges.
A $g$-cycle and a $g'$-cycle cannot share three consecutive edges.
\end{lemma}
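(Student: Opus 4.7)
The plan is to prove each part by contradiction: under the hypothesized
configuration I will locate a card $F\in\cD$ that admits only $G$ as a
reconstruction, thereby collapsing the alternative reconstruction $H$ onto
$G$ and contradicting $H\not\cong G$. The mechanism in both parts is the
same. Among the three ways to pair the four $2$-vertices of $F$, one is
$G$'s own pairing; I will rule out the other two by exhibiting, in each
case, a cycle of length strictly less than $g$ that the alternative pairing
would create, precisely as in the proof of Lemma~\ref{shortest}.

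For the first claim, suppose $g$-cycles $C_1,C_2$ share two consecutive
edges, and fix a maximal shared subpath $v_0v_1\cdots v_k$ with $k\ge 2$.
Maximality forces divergence at $v_0$: the $C_1$-neighbor $p$ and
$C_2$-neighbor $q$ of $v_0$ (other than $v_1$) are distinct. I would
delete the adjacent pair $\{v_0,v_1\}$. The four $2$-vertices of
$G-\{v_0,v_1\}$ are $p$, $q$, $v_2$, and the third neighbor $r$ of $v_1$,
which are distinct because $g\ge 5$. The pairings $\{p,v_2\}$ and
$\{q,v_2\}$ each complete a $(g-1)$-cycle through the long arc of the
respective $g$-cycle and are forbidden, so only $G$'s pairing
$\{p,q\}\mid\{v_2,r\}$ remains.

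For the second claim, suppose $C$ is a $g$-cycle and $C'$ is a $g'$-cycle
sharing three consecutive edges, and fix a maximal shared subpath
$v_0v_1\cdots v_k$ with $k\ge 3$. Maximality forces divergence at $v_k$:
the continuations $(v_k)_C$ and $(v_k)_{C'}$ (the neighbors of $v_k$ on $C$
and on $C'$ other than $v_{k-1}$) are distinct. Let $s$ be the third
neighbor of $v_{k-2}$. I would delete the distance-$2$ pair
$\{v_{k-2},v_k\}$ with common neighbor $v_{k-1}$. The four $2$-vertices of
$G-\{v_{k-2},v_k\}$ are $v_{k-3}$, $s$, $(v_k)_C$, and $(v_k)_{C'}$, again
distinct by $g\ge 5$. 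Pairing $\{v_{k-3},(v_k)_C\}$ completes a
$(g-2)$-cycle via the long arc of $C$, and pairing
$\{v_{k-3},(v_k)_{C'}\}$ completes a cycle of length $g'-2=g-1$ via the
long arc of $C'$; both lengths are less than $g$, so both alternative
pairings are forbidden.

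The main obstacle I anticipate is the routine but necessary bookkeeping to
confirm that the four $2$-vertices are distinct in every sub-case, notably
the boundary case $k=3$ in the second claim (where $v_{k-3}$ coincides with
the endpoint $v_0$ of the shared path, so the long arcs are short).  Note
also that the second claim uses up the full slack $g'-g=1$: the
alternative cycle produced via $C'$ has length exactly $g-1$, so the same
strategy would not exclude a $g$-cycle and a $(g+2)$-cycle sharing three
consecutive edges.
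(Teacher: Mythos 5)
Your proof is correct. For the first claim it is essentially the paper's own argument: the paper likewise deletes the adjacent pair at an endpoint of the shared path, rejects the pairing $\{a,b\}$ because it recreates $G$, rejects $\{a,u\}$ because it closes a $(g-1)$-cycle through $C$, and then observes that the one remaining pairing closes a $(g-1)$-cycle $D'$ through $D$ --- exactly your two girth violations, just packaged as ``two rejections plus a contradiction'' rather than ``three rejections.'' For the second claim you take a genuinely different route. The paper keeps the same adjacent-pair deletion at an end of the shared path; the forced alternative reconstruction $H$ then contains the $g$-cycle $C'$ and the $(g'-1)$-cycle $D'$ sharing the two consecutive edges at $u$, so the $g'$ case reduces to the $g$ case applied to $H$ via Observation~\ref{altsame}. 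You instead delete the distance-$2$ pair $\{v_{k-2},v_k\}$, use that the unique $1$-vertex $v_{k-1}$ must be joined to both missing vertices (so the four $2$-vertices again split into two pairs), and kill both alternative pairings outright with cycles of lengths $g-2$ and $g-1$. Your version is self-contained --- no bootstrapping of the first claim inside an alternative reconstruction --- at the modest cost of invoking the distance-$2$ card machinery of Lemmas~\ref{girth5} and~\ref{twopair}, which the paper has already set up. Your distinctness checks (all reducible to girth at least $5$ and maximality of the shared path), the boundary case $k=3$, and the closing remark that the slack $g'-g=1$ is exhausted are all sound.
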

\begin{proof}
Let $C$ and $D$ be a $g$-cycle and a cycle of length at most $g'$ in $G$ such
that $C\cap D$ has a component $P$ with at least two edges.  Let $x$ be an
endpoint of $P$, with $xy\in E(P)$.  Let $a$ and $b$ be the other neighbors of
$x$ on $C$ and $D$, respectively.  Let $u$ and $v$ be the neighbors of $y$
other than $x$, with $yu\in E(C)\cap E(D)$.  To avoid being $G$, the
alternative reconstruction $H$ from $G-\{x,y\}$ must not pair $a$ and $b$; to
avoid having a shorter cycle, it must not pair $a$ and $u$.  Hence it pairs $a$
and $v$, and we may assume $ax',vx',y'u,y'b\in E(H)$.  Now $H$ has a $g$-cycle
$C'$ obtained from $C$ by substituting $x'$ for $x$ and $y'$ for $y$ (see
Figure~\ref{splice}).

Also $H$ has a cycle $D'$ consisting of the path $\la b,y',u\ra$ and the
$u,b$-path along $D$ that does not use $x$.  This cycle $D'$ is shorter than
$D$.  If $D$ has length $g$, this is a contradiction.  If $D$ has length $g'$
and $P$ has a third edge, then $C'$ and $D'$ are two $g$-cycles with two
consecutive common edges, which the first case already forbids for all
reconstructions.
\end{proof}

\vspace{-1pc}
\begin{figure}[hbt]
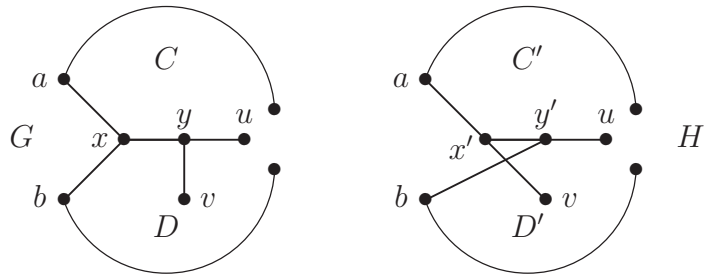

\gpic{
\expandafter\ifx\csname graph\endcsname\relax \csname newbox\endcsname\graph\fi
\expandafter\ifx\csname graphtemp\endcsname\relax \csname newdimen\endcsname\graphtemp\fi
\setbox\graph=\vtop{\vskip 0pt\hbox{%
    \graphtemp=.5ex\advance\graphtemp by 0.698in
    \rlap{\kern 0.536in\lower\graphtemp\hbox to 0pt{\hss $\bu$\hss}}%
    \graphtemp=.5ex\advance\graphtemp by 0.698in
    \rlap{\kern 0.851in\lower\graphtemp\hbox to 0pt{\hss $\bu$\hss}}%
    \graphtemp=.5ex\advance\graphtemp by 1.014in
    \rlap{\kern 0.221in\lower\graphtemp\hbox to 0pt{\hss $\bu$\hss}}%
    \graphtemp=.5ex\advance\graphtemp by 1.014in
    \rlap{\kern 0.851in\lower\graphtemp\hbox to 0pt{\hss $\bu$\hss}}%
    \graphtemp=.5ex\advance\graphtemp by 0.383in
    \rlap{\kern 0.221in\lower\graphtemp\hbox to 0pt{\hss $\bu$\hss}}%
    \graphtemp=.5ex\advance\graphtemp by 0.698in
    \rlap{\kern 1.167in\lower\graphtemp\hbox to 0pt{\hss $\bu$\hss}}%
    \graphtemp=.5ex\advance\graphtemp by 0.856in
    \rlap{\kern 1.324in\lower\graphtemp\hbox to 0pt{\hss $\bu$\hss}}%
    \graphtemp=.5ex\advance\graphtemp by 0.541in
    \rlap{\kern 1.324in\lower\graphtemp\hbox to 0pt{\hss $\bu$\hss}}%
    \special{pn 11}%
    \special{pa 536 698}%
    \special{pa 851 698}%
    \special{fp}%
    \special{pa 536 698}%
    \special{pa 221 1014}%
    \special{fp}%
    \special{pa 536 698}%
    \special{pa 221 383}%
    \special{fp}%
    \special{pa 851 698}%
    \special{pa 536 698}%
    \special{fp}%
    \special{pa 851 698}%
    \special{pa 851 1014}%
    \special{fp}%
    \special{pa 851 698}%
    \special{pa 1167 698}%
    \special{fp}%
    \special{pn 8}%
    \special{ar 757 568 568 568 -2.810165 -0.047634}%
    \special{ar 757 829 568 568 0.047634 2.810165}%
    \graphtemp=.5ex\advance\graphtemp by 0.288in
    \rlap{\kern 0.757in\lower\graphtemp\hbox to 0pt{\hss $C$\hss}}%
    \graphtemp=.5ex\advance\graphtemp by 1.171in
    \rlap{\kern 0.757in\lower\graphtemp\hbox to 0pt{\hss $D$\hss}}%
    \graphtemp=.5ex\advance\graphtemp by 0.698in
    \rlap{\kern 0.000in\lower\graphtemp\hbox to 0pt{\hss $G$\hss}}%
    \graphtemp=.5ex\advance\graphtemp by 0.698in
    \rlap{\kern 0.410in\lower\graphtemp\hbox to 0pt{\hss $x$\hss}}%
    \graphtemp=.5ex\advance\graphtemp by 1.014in
    \rlap{\kern 0.095in\lower\graphtemp\hbox to 0pt{\hss $b$\hss}}%
    \graphtemp=.5ex\advance\graphtemp by 0.383in
    \rlap{\kern 0.095in\lower\graphtemp\hbox to 0pt{\hss $a$\hss}}%
    \graphtemp=.5ex\advance\graphtemp by 0.572in
    \rlap{\kern 0.851in\lower\graphtemp\hbox to 0pt{\hss $y$\hss}}%
    \graphtemp=.5ex\advance\graphtemp by 1.014in
    \rlap{\kern 0.977in\lower\graphtemp\hbox to 0pt{\hss $v$\hss}}%
    \graphtemp=.5ex\advance\graphtemp by 0.572in
    \rlap{\kern 1.167in\lower\graphtemp\hbox to 0pt{\hss $u$\hss}}%
    \graphtemp=.5ex\advance\graphtemp by 0.698in
    \rlap{\kern 2.428in\lower\graphtemp\hbox to 0pt{\hss $\bu$\hss}}%
    \graphtemp=.5ex\advance\graphtemp by 0.698in
    \rlap{\kern 2.743in\lower\graphtemp\hbox to 0pt{\hss $\bu$\hss}}%
    \graphtemp=.5ex\advance\graphtemp by 1.014in
    \rlap{\kern 2.113in\lower\graphtemp\hbox to 0pt{\hss $\bu$\hss}}%
    \graphtemp=.5ex\advance\graphtemp by 1.014in
    \rlap{\kern 2.743in\lower\graphtemp\hbox to 0pt{\hss $\bu$\hss}}%
    \graphtemp=.5ex\advance\graphtemp by 0.383in
    \rlap{\kern 2.113in\lower\graphtemp\hbox to 0pt{\hss $\bu$\hss}}%
    \graphtemp=.5ex\advance\graphtemp by 0.698in
    \rlap{\kern 3.059in\lower\graphtemp\hbox to 0pt{\hss $\bu$\hss}}%
    \graphtemp=.5ex\advance\graphtemp by 0.856in
    \rlap{\kern 3.216in\lower\graphtemp\hbox to 0pt{\hss $\bu$\hss}}%
    \graphtemp=.5ex\advance\graphtemp by 0.541in
    \rlap{\kern 3.216in\lower\graphtemp\hbox to 0pt{\hss $\bu$\hss}}%
    \special{pn 11}%
    \special{pa 2428 698}%
    \special{pa 2743 698}%
    \special{fp}%
    \special{pa 2428 698}%
    \special{pa 2743 1014}%
    \special{fp}%
    \special{pa 2428 698}%
    \special{pa 2113 383}%
    \special{fp}%
    \special{pa 2743 698}%
    \special{pa 2428 698}%
    \special{fp}%
    \special{pa 2743 698}%
    \special{pa 2113 1014}%
    \special{fp}%
    \special{pa 2743 698}%
    \special{pa 3059 698}%
    \special{fp}%
    \special{pn 8}%
    \special{ar 2649 568 568 568 -2.810165 -0.047634}%
    \special{ar 2649 829 568 568 0.047634 2.810165}%
    \graphtemp=.5ex\advance\graphtemp by 0.288in
    \rlap{\kern 2.649in\lower\graphtemp\hbox to 0pt{\hss $C'$\hss}}%
    \graphtemp=.5ex\advance\graphtemp by 1.171in
    \rlap{\kern 2.649in\lower\graphtemp\hbox to 0pt{\hss $D'$\hss}}%
    \graphtemp=.5ex\advance\graphtemp by 0.698in
    \rlap{\kern 3.500in\lower\graphtemp\hbox to 0pt{\hss $H$\hss}}%
    \graphtemp=.5ex\advance\graphtemp by 0.761in
    \rlap{\kern 2.302in\lower\graphtemp\hbox to 0pt{\hss $x'$\hss}}%
    \graphtemp=.5ex\advance\graphtemp by 1.014in
    \rlap{\kern 1.986in\lower\graphtemp\hbox to 0pt{\hss $b$\hss}}%
    \graphtemp=.5ex\advance\graphtemp by 0.383in
    \rlap{\kern 1.986in\lower\graphtemp\hbox to 0pt{\hss $a$\hss}}%
    \graphtemp=.5ex\advance\graphtemp by 0.572in
    \rlap{\kern 2.743in\lower\graphtemp\hbox to 0pt{\hss $y'$\hss}}%
    \graphtemp=.5ex\advance\graphtemp by 1.014in
    \rlap{\kern 2.869in\lower\graphtemp\hbox to 0pt{\hss $v$\hss}}%
    \graphtemp=.5ex\advance\graphtemp by 0.572in
    \rlap{\kern 3.059in\lower\graphtemp\hbox to 0pt{\hss $u$\hss}}%
    \hbox{\vrule depth1.396in width0pt height 0pt}%
    \kern 3.500in
  }%
}%
}
\vspace{-1pc}
\caption{Consecutive edges shared by short cycles.}\label{splice}
\end{figure}

We refer to two cycles sharing two consecutive edges as {\it spliced} cycles.
We have now forbidden spliced $g$-cycles from $3$-regular graphs whose decks
have alternative reconstructions (a spliced $g$-cycle and $g'$-cycle remain
allowed, but they can't share three consecutive edges).

\begin{remark}\label{alt}
Henceforth, when $\la a,x,y,u\ra$ is a path along a $g$-cycle $C$, and
the third neighbors of $x$ and $y$ are $b$ and $v$, respectively, the
arguments we have made imply that any alternative reconstruction $H$
from $G-\{x,y\}$ is obtained by adding the vertices $x'$ and $y'$ with
$N_H(x')=\{y',a,v\}$ and $N_H(y')=\{x',u,b\}$.
\hfill$\square$
\end{remark}

When $xy$ is an edge on a $g$-cycle, Lemma~\ref{step0} implies that $x$ and $y$
each lie in at most one cycle not containing the other, since any such cycle
uses both incident edges other than $xy$.

\begin{lemma}\label{step1}
If $xy$ is an edge in two $g$-cycles, then $x$ and $y$ cannot each lie in
a $g$-cycle not containing the other.
\end{lemma}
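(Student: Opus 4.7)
Suppose for contradiction that $xy$ lies in two $g$-cycles $C_1, C_2$ and that $g$-cycles $C_x \not\ni y$ and $C_y \not\ni x$ both exist. Let $a, a'$ be the neighbors of $x$ on $C_1, C_2$ other than $y$, and $u, u'$ the corresponding neighbors of $y$. By Lemma~\ref{step0} the cycles $C_1$ and $C_2$ share only the edge $xy$, so $a \neq a'$ and $u \neq u'$; the remark preceding the lemma forces $C_x$ to use the pair $\{xa, xa'\}$ and $C_y$ to use $\{yu, yu'\}$. Each of the three edge-pairs at $x$ (and at $y$) is used by at most one $g$-cycle, so exactly three $g$-cycles contain $x$ (namely $C_1, C_2, C_x$) and three contain $y$; in total, exactly four $g$-cycles meet $\{x,y\}$.

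Form the alternative reconstruction $H$ of $G - \{x,y\}$ supplied by Remark~\ref{alt}, with $N_H(x') = \{y', a, u'\}$ and $N_H(y') = \{x', u, a'\}$. By Corollary~\ref{girth} and the identification $H - \{x',y'\} = G - \{x,y\}$, exactly four $g$-cycles of $H$ meet $\{x',y'\}$. Two pass through the edge $x'y'$: the copy $C_1'$ of $C_1$, and the analogue $C_2''$ of $C_2$ using $\{x'u', y'a'\}$, by Lemma~\ref{shortest}. The same pair-count at $x'$ forces the remaining two $g$-cycles of $H$ to use the pairs $\{x'a, x'u'\}$ and $\{y'u, y'a'\}$; call them $C_x''$ (through $x'$ only) and $C_y''$ (through $y'$ only). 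Deleting $x'$ from $C_x''$ yields a path $P_a$ of length $g-2$ from $a$ to $u'$ in $G - \{x,y\}$, and deleting $y'$ from $C_y''$ yields a path $P_u$ of length $g-2$ from $u$ to $a'$.

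Joining $P_a$ with the length-$3$ path $\langle a, x, y, u'\rangle$ produces a $(g+1)$-cycle $D_a$ in $G$, and analogously $P_u$ gives a $(g+1)$-cycle $D_u$. Applying Lemma~\ref{step0} to $(D_a, C_i)$ for each $i \in \{1, 2, x, y\}$: $D_a$ already shares two consecutive edges with each $C_i$ along its $a$-$x$-$y$-$u'$ segment, so the forbidden third consecutive edge forces the first interior vertex of $P_a$ to be $a_2$ (the $C_x$-neighbor of $a$), the penultimate interior vertex of $P_a$ to be the $C_y$-neighbor of $u'$ (call it $u_{g-2}$), the second vertex of $P_a$ to be the third neighbor of $a_2$ (distinct from $a$ and from the next $C_x$-vertex $a_3$), and the third-to-last vertex of $P_a$ to be the third neighbor of $u_{g-2}$ (distinct from $u'$ and from $u_{g-3}$). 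Symmetric constraints pin down $P_u$ at its two ends.

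When $g = 5$ these forced edges determine $P_a = (a, a_2, u_3, u')$ and $P_u = (u, u_2, a_3, a')$ completely, so $G$ contains the edges $a_2 u_3$ and $u_2 a_3$; together with $a_2 a_3 \in E(C_x)$ and $u_2 u_3 \in E(C_y)$ they form the $4$-cycle $(a_2, a_3, u_2, u_3)$, contradicting Lemma~\ref{girth5}. For $g \geq 6$ I would extend the argument by iterating Lemma~\ref{step0} on further $(g+1)$-cycles built from pasting $P_x$ (which also starts $a\to a_2$) and $P_y$ (which ends $u_{g-2}\to u'$) into the $H$-side analysis; each new step0 application at an interior vertex rigidifies one more step of $P_a$ or $P_u$ along $C_x$ or $C_y$, and the accumulated coincidences eventually force either a cycle shorter than $g$ in $G-x-y$ or a $g$-cycle and a $(g+1)$-cycle sharing three consecutive edges, again contradicting Lemma~\ref{step0} or Lemma~\ref{girth5}. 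The main obstacle is propagating this rigidification through the interior of $P_a$ and $P_u$ for arbitrary $g$; the mechanism is the same as for $g = 5$, where the two forced boundary edges meet in the middle, but for larger girth the bookkeeping and the choice of auxiliary $(g+1)$-cycles become more delicate.
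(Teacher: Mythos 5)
Your setup is sound and matches the paper's framework: counting pairs of edges at $x$ and $y$ (no spliced $g$-cycles, Lemma~\ref{step0}) to get exactly four $g$-cycles meeting $\{x,y\}$, transferring that count to the alternative reconstruction $H$ of $G-\{x,y\}$, and extracting the two $g$-cycles $C_x'',C_y''$ of $H$ through exactly one of $x',y'$, whence the paths $P_a,P_u$ of length $g-2$ and the $g'$-cycles $D_a,D_u$ in $G$. For $g=5$ your endgame is correct and genuinely different from the paper's: the step0 constraints at the two ends of $P_a$ and $P_u$ pin both paths down completely, and the four forced edges together with $a_2a_3\in E(C_x)$, $u_2u_3\in E(C_y)$ give a $4$-cycle (your derived non-coincidences $a_2\ne u_2$, $a_3\ne u_3$, plus the path/cycle distinctness, do make its vertices distinct), contradicting girth $5$. (One wording slip: $D_a$ does not initially share two consecutive edges with $C_x$ or $C_y$; it shares one, and only after the first forcing does the two-consecutive-edge situation with $C_x$ arise -- but each individual forcing you state is justified by step0, so the substance is fine.)

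The genuine gap is that this is a proof only for $g=5$, while the lemma must hold for every girth $g\ge 5$. Your mechanism constrains exactly the first two and last two edges of $P_a$ (and of $P_u$): once the path leaves $a_2$ for the third neighbor of $a_2$, it is off $C_x\cup C_1\cup C_2\cup C_y$ and none of the available lemmas says anything about its interior, so for $g\ge 7$ the two rigidified ends never meet and no short cycle is produced; even $g=6$ is not covered by what you wrote. The closing paragraph ("iterating Lemma~\ref{step0} on further $(g+1)$-cycles \ldots the bookkeeping \ldots become more delicate") is an acknowledged hope, not an argument, and it is not clear any such iteration exists. This is exactly the difficulty the paper's proof is designed to avoid: rather than trying to determine $P_a,P_u$, it uses the hypothesized cycles $Q,R$ through exactly one of $x,y$, transfers them to $H$ to obtain a single path $P$ of length $g-4$ joining the outer neighbors $a'$ and $v'$, and then moves to a \emph{second} card, $G-\{a,x\}$, where the alternative reconstruction closes $P$ into a $g$-cycle spliced with $R$, contradicting Lemma~\ref{step0} uniformly in $g$. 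Some device of that kind (a second deleted pair, or another way to control the interiors) is what your proposal is missing; as written it establishes the lemma only when $g=5$.
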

\begin{proof}
Let $C$ and $D$ be $g$-cycles containing $xy$, with
$\la a,x,y,u\ra$ along $C$ and $\la b,x,y,v\ra$ along $D$.
By Lemma~\ref{step0}, these six vertices are distinct.  Let $H$ be the
alternative reconstruction from $G-\{x,y\}$ as in Remark~\ref{alt}.
Note that $H$ has two $g$-cycles through $xy$ (see Figure~\ref{manyg}).

Suppose that each of $x$ and $y$ lies in a $g$-cycle not containing the other.
Since $G$ has no spliced $g$-cycles, these two $g$-cycles $Q$ and $R$ pass
through $\la a',a,x,b,b'\ra$ and $\la u',u,y,v,v'\ra$, respectively, where $w'$
for $w\in\{a,b,u,v\}$ is the neighbor of $w$ not in $C\cup D$.
By Lemma~\ref{shortest}, in $H$ each of $x'$ and $y'$ lies in a $g$-cycle not
containing the other.  To avoid spliced $g$-cycles in $H$, these $g$-cycles
$Q'$ and $R'$ must pass through $\la a',a,x',v,v'\ra$ and
$\la u',u,y',b,b'\ra$, respectively.  In particular, $H$ and $G$ contain
an $a',v'$-path $P$ of length $g-4$.

Now consider $G-\{a,x\}$.  Since $ax$ lies in the $g$-cycle $C$, an alternative
reconstruction $H'$ replacing $\{a,x\}$ with $\{a'',x''\}$ can be assumed to
have the $g$-cycle $C''$ through $\la z,a'',x'',y\ra$, where $z$ is the
neighbor of $a$ on $C$ other than $x$, and the edges $a'x''$ and $a''b$ (see
Remark~\ref{alt}).  In $H'$, the path $\la a',x'',y,v,v'\ra$ combines with $P$
to form a $g$-cycle.  However, this $g$-cycle shares consecutive edges $yv$
and $vv'$ with $R$, creating spliced $g$-cycles, which is forbidden.
\end{proof}

\vspace{-1pc}
\begin{figure}[hbt]
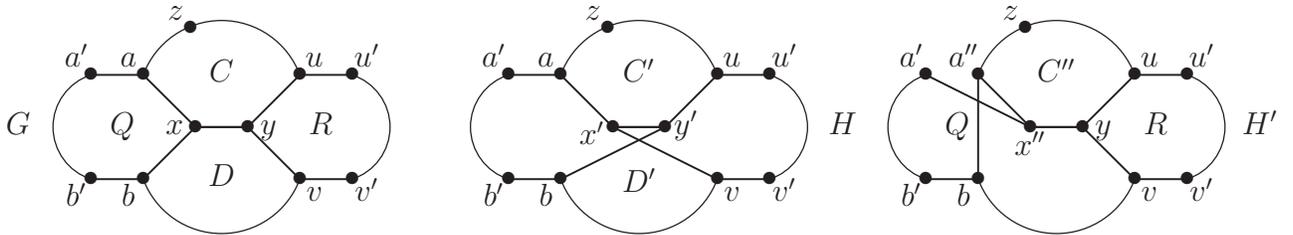

\gpic{
\expandafter\ifx\csname graph\endcsname\relax \csname newbox\endcsname\graph\fi
\expandafter\ifx\csname graphtemp\endcsname\relax \csname newdimen\endcsname\graphtemp\fi
\setbox\graph=\vtop{\vskip 0pt\hbox{%
    \graphtemp=.5ex\advance\graphtemp by 0.628in
    \rlap{\kern 0.929in\lower\graphtemp\hbox to 0pt{\hss $\bu$\hss}}%
    \graphtemp=.5ex\advance\graphtemp by 0.628in
    \rlap{\kern 1.202in\lower\graphtemp\hbox to 0pt{\hss $\bu$\hss}}%
    \graphtemp=.5ex\advance\graphtemp by 0.901in
    \rlap{\kern 0.655in\lower\graphtemp\hbox to 0pt{\hss $\bu$\hss}}%
    \graphtemp=.5ex\advance\graphtemp by 0.901in
    \rlap{\kern 1.475in\lower\graphtemp\hbox to 0pt{\hss $\bu$\hss}}%
    \graphtemp=.5ex\advance\graphtemp by 0.355in
    \rlap{\kern 0.655in\lower\graphtemp\hbox to 0pt{\hss $\bu$\hss}}%
    \graphtemp=.5ex\advance\graphtemp by 0.355in
    \rlap{\kern 1.475in\lower\graphtemp\hbox to 0pt{\hss $\bu$\hss}}%
    \graphtemp=.5ex\advance\graphtemp by 0.901in
    \rlap{\kern 0.382in\lower\graphtemp\hbox to 0pt{\hss $\bu$\hss}}%
    \graphtemp=.5ex\advance\graphtemp by 0.901in
    \rlap{\kern 1.748in\lower\graphtemp\hbox to 0pt{\hss $\bu$\hss}}%
    \graphtemp=.5ex\advance\graphtemp by 0.355in
    \rlap{\kern 0.382in\lower\graphtemp\hbox to 0pt{\hss $\bu$\hss}}%
    \graphtemp=.5ex\advance\graphtemp by 0.355in
    \rlap{\kern 1.748in\lower\graphtemp\hbox to 0pt{\hss $\bu$\hss}}%
    \special{pn 11}%
    \special{pa 929 628}%
    \special{pa 1202 628}%
    \special{fp}%
    \special{pa 929 628}%
    \special{pa 655 901}%
    \special{fp}%
    \special{pa 929 628}%
    \special{pa 655 355}%
    \special{fp}%
    \special{pa 1202 628}%
    \special{pa 929 628}%
    \special{fp}%
    \special{pa 1202 628}%
    \special{pa 1475 901}%
    \special{fp}%
    \special{pa 1202 628}%
    \special{pa 1475 355}%
    \special{fp}%
    \special{pn 8}%
    \special{ar 1065 507 437 437 -2.786171 -0.355421}%
    \special{ar 1065 749 437 437 0.355421 2.786171}%
    \special{ar 470 628 287 287 1.880641 4.402544}%
    \special{ar 1660 628 287 287 -1.260952 1.260952}%
    \special{pn 11}%
    \special{pa 655 901}%
    \special{pa 382 901}%
    \special{fp}%
    \special{pa 1475 901}%
    \special{pa 1748 901}%
    \special{fp}%
    \special{pa 655 355}%
    \special{pa 382 355}%
    \special{fp}%
    \special{pa 1475 355}%
    \special{pa 1748 355}%
    \special{fp}%
    \graphtemp=.5ex\advance\graphtemp by 0.628in
    \rlap{\kern 0.819in\lower\graphtemp\hbox to 0pt{\hss $x$\hss}}%
    \graphtemp=.5ex\advance\graphtemp by 1.006in
    \rlap{\kern 0.578in\lower\graphtemp\hbox to 0pt{\hss $b$\hss}}%
    \graphtemp=.5ex\advance\graphtemp by 0.278in
    \rlap{\kern 0.578in\lower\graphtemp\hbox to 0pt{\hss $a$\hss}}%
    \graphtemp=.5ex\advance\graphtemp by 0.628in
    \rlap{\kern 1.311in\lower\graphtemp\hbox to 0pt{\hss $y$\hss}}%
    \graphtemp=.5ex\advance\graphtemp by 0.979in
    \rlap{\kern 1.552in\lower\graphtemp\hbox to 0pt{\hss $v$\hss}}%
    \graphtemp=.5ex\advance\graphtemp by 0.278in
    \rlap{\kern 1.552in\lower\graphtemp\hbox to 0pt{\hss $u$\hss}}%
    \graphtemp=.5ex\advance\graphtemp by 0.355in
    \rlap{\kern 1.065in\lower\graphtemp\hbox to 0pt{\hss $C$\hss}}%
    \graphtemp=.5ex\advance\graphtemp by 0.901in
    \rlap{\kern 1.065in\lower\graphtemp\hbox to 0pt{\hss $D$\hss}}%
    \graphtemp=.5ex\advance\graphtemp by 0.628in
    \rlap{\kern 0.000in\lower\graphtemp\hbox to 0pt{\hss $G$\hss}}%
    \graphtemp=.5ex\advance\graphtemp by 1.006in
    \rlap{\kern 0.305in\lower\graphtemp\hbox to 0pt{\hss $b'$\hss}}%
    \graphtemp=.5ex\advance\graphtemp by 0.278in
    \rlap{\kern 0.305in\lower\graphtemp\hbox to 0pt{\hss $a'$\hss}}%
    \graphtemp=.5ex\advance\graphtemp by 0.979in
    \rlap{\kern 1.825in\lower\graphtemp\hbox to 0pt{\hss $v'$\hss}}%
    \graphtemp=.5ex\advance\graphtemp by 0.628in
    \rlap{\kern 0.546in\lower\graphtemp\hbox to 0pt{\hss $Q$\hss}}%
    \graphtemp=.5ex\advance\graphtemp by 0.628in
    \rlap{\kern 1.584in\lower\graphtemp\hbox to 0pt{\hss $R$\hss}}%
    \graphtemp=.5ex\advance\graphtemp by 0.278in
    \rlap{\kern 1.825in\lower\graphtemp\hbox to 0pt{\hss $u'$\hss}}%
    \graphtemp=.5ex\advance\graphtemp by 0.109in
    \rlap{\kern 0.901in\lower\graphtemp\hbox to 0pt{\hss $\bu$\hss}}%
    \graphtemp=.5ex\advance\graphtemp by 0.032in
    \rlap{\kern 0.824in\lower\graphtemp\hbox to 0pt{\hss $z$\hss}}%
    \graphtemp=.5ex\advance\graphtemp by 0.628in
    \rlap{\kern 3.113in\lower\graphtemp\hbox to 0pt{\hss $\bu$\hss}}%
    \graphtemp=.5ex\advance\graphtemp by 0.628in
    \rlap{\kern 3.387in\lower\graphtemp\hbox to 0pt{\hss $\bu$\hss}}%
    \graphtemp=.5ex\advance\graphtemp by 0.901in
    \rlap{\kern 2.840in\lower\graphtemp\hbox to 0pt{\hss $\bu$\hss}}%
    \graphtemp=.5ex\advance\graphtemp by 0.901in
    \rlap{\kern 3.660in\lower\graphtemp\hbox to 0pt{\hss $\bu$\hss}}%
    \graphtemp=.5ex\advance\graphtemp by 0.355in
    \rlap{\kern 2.840in\lower\graphtemp\hbox to 0pt{\hss $\bu$\hss}}%
    \graphtemp=.5ex\advance\graphtemp by 0.355in
    \rlap{\kern 3.660in\lower\graphtemp\hbox to 0pt{\hss $\bu$\hss}}%
    \graphtemp=.5ex\advance\graphtemp by 0.901in
    \rlap{\kern 2.567in\lower\graphtemp\hbox to 0pt{\hss $\bu$\hss}}%
    \graphtemp=.5ex\advance\graphtemp by 0.901in
    \rlap{\kern 3.933in\lower\graphtemp\hbox to 0pt{\hss $\bu$\hss}}%
    \graphtemp=.5ex\advance\graphtemp by 0.355in
    \rlap{\kern 2.567in\lower\graphtemp\hbox to 0pt{\hss $\bu$\hss}}%
    \graphtemp=.5ex\advance\graphtemp by 0.355in
    \rlap{\kern 3.933in\lower\graphtemp\hbox to 0pt{\hss $\bu$\hss}}%
    \special{pa 3113 628}%
    \special{pa 3387 628}%
    \special{fp}%
    \special{pa 3113 628}%
    \special{pa 3660 901}%
    \special{fp}%
    \special{pa 3113 628}%
    \special{pa 2840 355}%
    \special{fp}%
    \special{pa 3387 628}%
    \special{pa 3113 628}%
    \special{fp}%
    \special{pa 3387 628}%
    \special{pa 2840 901}%
    \special{fp}%
    \special{pa 3387 628}%
    \special{pa 3660 355}%
    \special{fp}%
    \special{pn 8}%
    \special{ar 3250 507 437 437 -2.786171 -0.355421}%
    \special{ar 3250 749 437 437 0.355421 2.786171}%
    \special{ar 2655 628 287 287 1.880641 4.402544}%
    \special{ar 3845 628 287 287 -1.260952 1.260952}%
    \special{pn 11}%
    \special{pa 2840 901}%
    \special{pa 2567 901}%
    \special{fp}%
    \special{pa 3660 901}%
    \special{pa 3933 901}%
    \special{fp}%
    \special{pa 2840 355}%
    \special{pa 2567 355}%
    \special{fp}%
    \special{pa 3660 355}%
    \special{pa 3933 355}%
    \special{fp}%
    \graphtemp=.5ex\advance\graphtemp by 0.683in
    \rlap{\kern 3.004in\lower\graphtemp\hbox to 0pt{\hss $x'$\hss}}%
    \graphtemp=.5ex\advance\graphtemp by 1.006in
    \rlap{\kern 2.763in\lower\graphtemp\hbox to 0pt{\hss $b$\hss}}%
    \graphtemp=.5ex\advance\graphtemp by 0.278in
    \rlap{\kern 2.763in\lower\graphtemp\hbox to 0pt{\hss $a$\hss}}%
    \graphtemp=.5ex\advance\graphtemp by 0.628in
    \rlap{\kern 3.496in\lower\graphtemp\hbox to 0pt{\hss $y'$\hss}}%
    \graphtemp=.5ex\advance\graphtemp by 0.979in
    \rlap{\kern 3.737in\lower\graphtemp\hbox to 0pt{\hss $v$\hss}}%
    \graphtemp=.5ex\advance\graphtemp by 0.278in
    \rlap{\kern 3.737in\lower\graphtemp\hbox to 0pt{\hss $u$\hss}}%
    \graphtemp=.5ex\advance\graphtemp by 0.355in
    \rlap{\kern 3.250in\lower\graphtemp\hbox to 0pt{\hss $C'$\hss}}%
    \graphtemp=.5ex\advance\graphtemp by 0.929in
    \rlap{\kern 3.250in\lower\graphtemp\hbox to 0pt{\hss $D'$\hss}}%
    \graphtemp=.5ex\advance\graphtemp by 0.628in
    \rlap{\kern 4.315in\lower\graphtemp\hbox to 0pt{\hss $H$\hss}}%
    \graphtemp=.5ex\advance\graphtemp by 1.006in
    \rlap{\kern 2.490in\lower\graphtemp\hbox to 0pt{\hss $b'$\hss}}%
    \graphtemp=.5ex\advance\graphtemp by 0.278in
    \rlap{\kern 2.490in\lower\graphtemp\hbox to 0pt{\hss $a'$\hss}}%
    \graphtemp=.5ex\advance\graphtemp by 0.979in
    \rlap{\kern 4.010in\lower\graphtemp\hbox to 0pt{\hss $v'$\hss}}%
    \graphtemp=.5ex\advance\graphtemp by 0.628in
    \rlap{\kern 2.731in\lower\graphtemp\hbox to 0pt{\hss $~$\hss}}%
    \graphtemp=.5ex\advance\graphtemp by 0.628in
    \rlap{\kern 3.769in\lower\graphtemp\hbox to 0pt{\hss $~$\hss}}%
    \graphtemp=.5ex\advance\graphtemp by 0.278in
    \rlap{\kern 4.010in\lower\graphtemp\hbox to 0pt{\hss $u'$\hss}}%
    \graphtemp=.5ex\advance\graphtemp by 0.109in
    \rlap{\kern 3.086in\lower\graphtemp\hbox to 0pt{\hss $\bu$\hss}}%
    \graphtemp=.5ex\advance\graphtemp by 0.032in
    \rlap{\kern 3.009in\lower\graphtemp\hbox to 0pt{\hss $z$\hss}}%
    \graphtemp=.5ex\advance\graphtemp by 0.628in
    \rlap{\kern 5.298in\lower\graphtemp\hbox to 0pt{\hss $\bu$\hss}}%
    \graphtemp=.5ex\advance\graphtemp by 0.628in
    \rlap{\kern 5.571in\lower\graphtemp\hbox to 0pt{\hss $\bu$\hss}}%
    \graphtemp=.5ex\advance\graphtemp by 0.901in
    \rlap{\kern 5.025in\lower\graphtemp\hbox to 0pt{\hss $\bu$\hss}}%
    \graphtemp=.5ex\advance\graphtemp by 0.901in
    \rlap{\kern 5.845in\lower\graphtemp\hbox to 0pt{\hss $\bu$\hss}}%
    \graphtemp=.5ex\advance\graphtemp by 0.355in
    \rlap{\kern 5.025in\lower\graphtemp\hbox to 0pt{\hss $\bu$\hss}}%
    \graphtemp=.5ex\advance\graphtemp by 0.355in
    \rlap{\kern 5.845in\lower\graphtemp\hbox to 0pt{\hss $\bu$\hss}}%
    \graphtemp=.5ex\advance\graphtemp by 0.901in
    \rlap{\kern 4.752in\lower\graphtemp\hbox to 0pt{\hss $\bu$\hss}}%
    \graphtemp=.5ex\advance\graphtemp by 0.901in
    \rlap{\kern 6.118in\lower\graphtemp\hbox to 0pt{\hss $\bu$\hss}}%
    \graphtemp=.5ex\advance\graphtemp by 0.355in
    \rlap{\kern 4.752in\lower\graphtemp\hbox to 0pt{\hss $\bu$\hss}}%
    \graphtemp=.5ex\advance\graphtemp by 0.355in
    \rlap{\kern 6.118in\lower\graphtemp\hbox to 0pt{\hss $\bu$\hss}}%
    \special{pa 5298 628}%
    \special{pa 5571 628}%
    \special{fp}%
    \special{pa 5298 628}%
    \special{pa 4752 355}%
    \special{fp}%
    \special{pa 5298 628}%
    \special{pa 5025 355}%
    \special{fp}%
    \special{pa 5571 628}%
    \special{pa 5298 628}%
    \special{fp}%
    \special{pa 5571 628}%
    \special{pa 5845 901}%
    \special{fp}%
    \special{pa 5571 628}%
    \special{pa 5845 355}%
    \special{fp}%
    \special{pn 8}%
    \special{ar 5435 507 437 437 -2.786171 -0.355421}%
    \special{ar 5435 749 437 437 0.355421 2.786171}%
    \special{ar 4840 628 287 287 1.880641 4.402544}%
    \special{ar 6030 628 287 287 -1.260952 1.260952}%
    \special{pn 11}%
    \special{pa 5025 901}%
    \special{pa 4752 901}%
    \special{fp}%
    \special{pa 5845 901}%
    \special{pa 6118 901}%
    \special{fp}%
    \special{pa 5025 355}%
    \special{pa 5025 901}%
    \special{fp}%
    \special{pa 5845 355}%
    \special{pa 6118 355}%
    \special{fp}%
    \graphtemp=.5ex\advance\graphtemp by 0.737in
    \rlap{\kern 5.298in\lower\graphtemp\hbox to 0pt{\hss $x''$\hss}}%
    \graphtemp=.5ex\advance\graphtemp by 1.006in
    \rlap{\kern 4.948in\lower\graphtemp\hbox to 0pt{\hss $b$\hss}}%
    \graphtemp=.5ex\advance\graphtemp by 0.278in
    \rlap{\kern 4.948in\lower\graphtemp\hbox to 0pt{\hss $a''$\hss}}%
    \graphtemp=.5ex\advance\graphtemp by 0.628in
    \rlap{\kern 5.681in\lower\graphtemp\hbox to 0pt{\hss $y$\hss}}%
    \graphtemp=.5ex\advance\graphtemp by 0.979in
    \rlap{\kern 5.922in\lower\graphtemp\hbox to 0pt{\hss $v$\hss}}%
    \graphtemp=.5ex\advance\graphtemp by 0.278in
    \rlap{\kern 5.922in\lower\graphtemp\hbox to 0pt{\hss $u$\hss}}%
    \graphtemp=.5ex\advance\graphtemp by 0.355in
    \rlap{\kern 5.435in\lower\graphtemp\hbox to 0pt{\hss $C''$\hss}}%
    \graphtemp=.5ex\advance\graphtemp by 0.901in
    \rlap{\kern 5.435in\lower\graphtemp\hbox to 0pt{\hss $~$\hss}}%
    \graphtemp=.5ex\advance\graphtemp by 0.628in
    \rlap{\kern 6.500in\lower\graphtemp\hbox to 0pt{\hss $H'$\hss}}%
    \graphtemp=.5ex\advance\graphtemp by 1.006in
    \rlap{\kern 4.675in\lower\graphtemp\hbox to 0pt{\hss $b'$\hss}}%
    \graphtemp=.5ex\advance\graphtemp by 0.278in
    \rlap{\kern 4.675in\lower\graphtemp\hbox to 0pt{\hss $a'$\hss}}%
    \graphtemp=.5ex\advance\graphtemp by 0.979in
    \rlap{\kern 6.195in\lower\graphtemp\hbox to 0pt{\hss $v'$\hss}}%
    \graphtemp=.5ex\advance\graphtemp by 0.628in
    \rlap{\kern 4.916in\lower\graphtemp\hbox to 0pt{\hss $Q$\hss}}%
    \graphtemp=.5ex\advance\graphtemp by 0.628in
    \rlap{\kern 5.954in\lower\graphtemp\hbox to 0pt{\hss $R$\hss}}%
    \graphtemp=.5ex\advance\graphtemp by 0.278in
    \rlap{\kern 6.195in\lower\graphtemp\hbox to 0pt{\hss $u'$\hss}}%
    \graphtemp=.5ex\advance\graphtemp by 0.109in
    \rlap{\kern 5.271in\lower\graphtemp\hbox to 0pt{\hss $\bu$\hss}}%
    \graphtemp=.5ex\advance\graphtemp by 0.032in
    \rlap{\kern 5.194in\lower\graphtemp\hbox to 0pt{\hss $z$\hss}}%
    \hbox{\vrule depth1.186in width0pt height 0pt}%
    \kern 6.500in
  }%
}%
}
\caption{Many $g$-cycles through $\{x,y\}$.}\label{manyg}
\end{figure}

\eject

\begin{lemma}\label{step2}
No vertex lies in three $g$-cycles.
\end{lemma}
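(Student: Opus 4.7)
Suppose for contradiction that some vertex $x$ lies on three $g$-cycles $C_1, C_2, C_3$ in $G$, and let $N(x) = \{a, b, c\}$. By Lemma~\ref{step0}, no two of the $C_i$ share two consecutive edges, so the three pairs of edges at $x$ are distributed one to each cycle; label so that $C_1, C_2, C_3$ use the pairs $\{xa,xb\}$, $\{xa,xc\}$, $\{xb,xc\}$ at $x$, respectively. Hence each edge at $x$ lies on exactly two of the three cycles, and Lemma~\ref{step1} applies to each of $xa, xb, xc$. Applied to $xa$ (with $x \in C_3$), it gives: either $a \in C_3$, or $a$ lies on no $g$-cycle not through $x$. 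The plan is to handle the ``main case'' where $a \notin C_3$, $b \notin C_2$, and $c \notin C_1$; in this case each of $a, b, c$ lies on exactly the two of $C_1, C_2, C_3$ that use its edge from $x$. The remaining cases (in which some $C_i$ contains all three of $a, b, c$) produce a Case~A configuration around a different vertex and will be handled by a parallel argument.

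Next I delete $\{x, a\}$ and build the alternative reconstruction $H$ via Remark~\ref{alt} with reference cycle $C_1$: writing $\langle b, x, a, u_1\rangle$ for the segment of $C_1$ around $xa$ and $u_2$ for the third neighbor of $a$ (which lies on $C_2$), we have $N_H(x')=\{a',b,u_2\}$ and $N_H(a')=\{x',u_1,c\}$. Then $H$ contains the natural copy $C_1'$ of $C_1$ and the ``swapped'' copy $C_2'$ of $C_2$ (obtained by interchanging $x, a$ with $a', x'$ along the $c, u_2$-path of $C_2$), both through the edge $x'a'$. By Lemma~\ref{shortest}, the number of $g$-cycles in $H$ through $\{x', a'\}$ equals that in $G$ through $\{x, a\}$, which is three in the main case. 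Now Lemma~\ref{step1} applied to $H$ at edge $x'a'$ forbids having simultaneously an ``$x'$-only'' and an ``$a'$-only'' extra $g$-cycle, so the missing third $g$-cycle $D$ passes through exactly one of $\{x', a'\}$. Using the symmetry $(b, u_2) \leftrightarrow (u_1, c)$, I may assume $D$ uses $\{x'b, x'u_2\}$; its underlying $b, u_2$-path $P$ has length $g-2$ in $G - \{x, a\}$.

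This $P$ together with the edges $xa, xb, au_2$ forms a $(g{+}1)$-cycle $E$ in $G$. Comparing $E$ to each $C_i$ and invoking Lemma~\ref{step0}, the pair $\{xa, xb\}$ is shared with $C_1$, the pair $\{xa, au_2\}$ with $C_2$, and the forbidden-splice constraint at $b$ forces the first edge of $P$ at $b$ to be $bb_2$ (where $bb_1$ denotes $C_1$'s edge at $b$ and $bb_2$ denotes $C_3$'s), so $E$ shares the pair $\{xb, bb_2\}$ with $C_3$ as well. I then repeat the whole construction with the alternative deletion $\{x, b\}$, obtaining a second reconstruction $H^*$ with a second extra $g$-cycle $D^*$, a second path $Q$ of length $g-2$ in $G - \{x, b\}$, and a second $(g{+}1)$-cycle $E^*$ in $G$. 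A case analysis on how $P$ and $Q$ attach at their respective endpoints (subject to the no-two-consecutive-edges constraints of Lemma~\ref{step0} against $C_1, C_2, C_3$) forces $P$ and $Q$ to overlap in a way that produces a $g$-cycle in $G$ sharing three consecutive edges with one of $C_1, C_2, C_3$, contradicting Lemma~\ref{step0}.

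The main obstacle is the final combinatorial analysis: I must show that the two paths $P$ and $Q$, each forced by a separate alternative reconstruction and each subject to several no-splice constraints against the cycles $C_1, C_2, C_3$, cannot coexist without contradicting Lemma~\ref{step0} either directly or by forcing an extra $g$-cycle. The bookkeeping relies on the girth condition $g \ge 5$ (Lemma~\ref{girth5}) to prevent degenerate coincidences between interior vertices of $P, Q$ and the fixed vertices $a, b, c, u_1, u_2, b_1, b_2, c_1, c_2$, and on Lemma~\ref{altsame} to apply the already-proved lemmas (especially step1) to the alternative reconstructions $H$ and $H^*$.
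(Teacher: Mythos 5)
Your setup (the three pairs of edges at $x$, the fact that each edge $xa,xb,xc$ lies in exactly two of $C_1,C_2,C_3$, the application of Remark~\ref{alt} to $G-\{x,a\}$, and the count via Lemma~\ref{shortest} forcing exactly one extra $g$-cycle $D$ through exactly one of $\{x',a'\}$) is sound and parallels the paper, which likewise deletes the central vertex together with a neighbor. But there are two genuine gaps. First, the reduction ``by the symmetry $(b,u_2)\leftrightarrow(u_1,c)$ I may assume $D$ uses $\{x'b,x'u_2\}$'' is not valid: that relabeling interchanges the central vertex $x$ (which lies on three $g$-cycles) with its neighbor $a$ (which, in your main case, lies on only two), so the configuration admits no such symmetry. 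The two cases give genuinely different conclusions about $G$ (a $b,u_2$-path versus a $u_1,c$-path of length $g-2$), and both must be carried forward; what the deletion actually yields is an \emph{exclusive or} between the two paths, not a free choice.

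Second, and more seriously, the entire contradiction rests on the unexecuted claim that the paths $P$ and $Q$ obtained from the two deletions $\{x,a\}$ and $\{x,b\}$ cannot coexist; you state this as ``a case analysis \dots forces $P$ and $Q$ to overlap'' and then name it as the main obstacle. That is the crux of the lemma, and it is not established. There is reason to doubt it can be established from two deletions alone: the paper's proof performs all three deletions $G-\{x,y\}$, $G-\{u,y\}$, $G-\{v,y\}$, extracts from each an exclusive-or between two candidate paths of length $g-3$ (e.g.\ ``an $a,f$-path or a $b,c$-path, and not both''), and derives the contradiction only from the joint unsatisfiability of all \emph{three} exclusive-or conditions, via the observation that any choice in one set forces a choice in a second set that leaves the third set with no valid pick. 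Any two of those three conditions are simultaneously satisfiable, so a scheme using only two deletions needs substantial additional input that you have not supplied. Finally, the degenerate cases in which some $C_i$ contains all of $a,b,c$ are dismissed by appeal to an undefined ``Case~A configuration,'' which leaves those cases unhandled as well.
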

\begin{proof}
Suppose that $y$ with neighborhood $\{x,u,v\}$ lies in three $g$-cycles in $G$.
Each of these $g$-cycles uses two edges at $y$; any two of them have one
common edge.  With $a,b\in N_G(x)$, $c,d\in N_G(u)$, and $e,f\in N_G(v)$,
label the vertices so that the three cycles $C$, $D$, and $R$ contain the paths
$\la a,x,y,u,d\ra$, $\la c,u,y,v,f\ra$, and $\la e,v,y,x,b\ra$, respectively
(see Figure~\ref{threeg}).  Since $g\ge5$, these 10 vertices are distinct.

Since $xy$ lies in the $g$-cycle $C$ containing $\la a,x,y,u,d\ra$,
the card $G-\{x,y\}$ has only one alternative reconstruction $H$.
As in Remark~\ref{alt}, we may obtain $H$ from the card by adding
$x'$ and $y'$ with $N_H(x')=\{y',a,v\}$ and $N_H(y')=\{x',u,b\}$.

By Lemma~\ref{step1}, $G$ has no $g$-cycle through $\la a,x,b\ra$.
Hence $G$ has exactly one $g$-cycle containing exactly one of $\{x,y\}$.
By Lemma~\ref{shortest}, in $H$ exactly one $g$-cycle $Q$ contains exactly one
of $\{x',y'\}$.  Hence $Q$ contains $\la a,x',v\ra$ or $\la b,y',u\ra$.
Avoiding spliced $g$-cycles in $H$ implies that $Q$ contains $vf$ in the 
first case and $uc$ in the second case.  Hence $G$ contains an $a,f$-path
or a $b,c$-path of length $g-3$, and not both.

Applying the symmetric argument to $G-\{u,y\}$ and $G-\{v,y\}$ yields paths
with length $g-3$ in $G$ whose endpoints are exactly one pair in each of the
following three sets: $\{(a,f),(b,c)\}$, $\{(c,b),(d,e)\}$, $\{(f,a),(e,d)\}$.
This is impossible: as soon as one pair of endpoints is picked, it satisfies
one other set, which then prevents the third set from contributing a pair.
\end{proof}

\vspace{-2pc}
\begin{figure}[hbt]
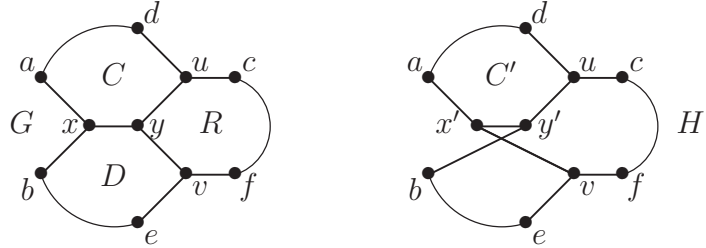

\gpic{
\expandafter\ifx\csname graph\endcsname\relax \csname newbox\endcsname\graph\fi
\expandafter\ifx\csname graphtemp\endcsname\relax \csname newdimen\endcsname\graphtemp\fi
\setbox\graph=\vtop{\vskip 0pt\hbox{%
    \graphtemp=.5ex\advance\graphtemp by 0.609in
    \rlap{\kern 0.355in\lower\graphtemp\hbox to 0pt{\hss $\bu$\hss}}%
    \graphtemp=.5ex\advance\graphtemp by 0.609in
    \rlap{\kern 0.609in\lower\graphtemp\hbox to 0pt{\hss $\bu$\hss}}%
    \graphtemp=.5ex\advance\graphtemp by 0.862in
    \rlap{\kern 0.101in\lower\graphtemp\hbox to 0pt{\hss $\bu$\hss}}%
    \graphtemp=.5ex\advance\graphtemp by 0.862in
    \rlap{\kern 0.862in\lower\graphtemp\hbox to 0pt{\hss $\bu$\hss}}%
    \graphtemp=.5ex\advance\graphtemp by 0.355in
    \rlap{\kern 0.101in\lower\graphtemp\hbox to 0pt{\hss $\bu$\hss}}%
    \graphtemp=.5ex\advance\graphtemp by 0.355in
    \rlap{\kern 0.862in\lower\graphtemp\hbox to 0pt{\hss $\bu$\hss}}%
    \graphtemp=.5ex\advance\graphtemp by 0.101in
    \rlap{\kern 0.609in\lower\graphtemp\hbox to 0pt{\hss $\bu$\hss}}%
    \graphtemp=.5ex\advance\graphtemp by 0.862in
    \rlap{\kern 1.116in\lower\graphtemp\hbox to 0pt{\hss $\bu$\hss}}%
    \graphtemp=.5ex\advance\graphtemp by 0.355in
    \rlap{\kern 1.116in\lower\graphtemp\hbox to 0pt{\hss $\bu$\hss}}%
    \graphtemp=.5ex\advance\graphtemp by 1.116in
    \rlap{\kern 0.609in\lower\graphtemp\hbox to 0pt{\hss $\bu$\hss}}%
    \special{pn 11}%
    \special{pa 355 609}%
    \special{pa 609 609}%
    \special{fp}%
    \special{pa 355 609}%
    \special{pa 101 862}%
    \special{fp}%
    \special{pa 355 609}%
    \special{pa 101 355}%
    \special{fp}%
    \special{pa 862 862}%
    \special{pa 609 609}%
    \special{fp}%
    \special{pa 862 862}%
    \special{pa 609 1116}%
    \special{fp}%
    \special{pa 862 862}%
    \special{pa 1116 862}%
    \special{fp}%
    \special{pa 862 355}%
    \special{pa 609 609}%
    \special{fp}%
    \special{pa 862 355}%
    \special{pa 609 101}%
    \special{fp}%
    \special{pa 862 355}%
    \special{pa 1116 355}%
    \special{fp}%
    \special{pn 8}%
    \special{ar 485 488 406 406 -2.808122 -1.260766}%
    \special{ar 485 729 406 406 1.260766 2.808122}%
    \special{ar 1035 609 266 266 -1.260952 1.260952}%
    \graphtemp=.5ex\advance\graphtemp by 0.609in
    \rlap{\kern 0.254in\lower\graphtemp\hbox to 0pt{\hss $x$\hss}}%
    \graphtemp=.5ex\advance\graphtemp by 0.959in
    \rlap{\kern 0.030in\lower\graphtemp\hbox to 0pt{\hss $b$\hss}}%
    \graphtemp=.5ex\advance\graphtemp by 0.283in
    \rlap{\kern 0.030in\lower\graphtemp\hbox to 0pt{\hss $a$\hss}}%
    \graphtemp=.5ex\advance\graphtemp by 0.609in
    \rlap{\kern 0.710in\lower\graphtemp\hbox to 0pt{\hss $y$\hss}}%
    \graphtemp=.5ex\advance\graphtemp by 0.934in
    \rlap{\kern 0.934in\lower\graphtemp\hbox to 0pt{\hss $v$\hss}}%
    \graphtemp=.5ex\advance\graphtemp by 0.283in
    \rlap{\kern 0.934in\lower\graphtemp\hbox to 0pt{\hss $u$\hss}}%
    \graphtemp=.5ex\advance\graphtemp by 0.030in
    \rlap{\kern 0.680in\lower\graphtemp\hbox to 0pt{\hss $d$\hss}}%
    \graphtemp=.5ex\advance\graphtemp by 0.283in
    \rlap{\kern 1.188in\lower\graphtemp\hbox to 0pt{\hss $c$\hss}}%
    \graphtemp=.5ex\advance\graphtemp by 0.934in
    \rlap{\kern 1.188in\lower\graphtemp\hbox to 0pt{\hss $f$\hss}}%
    \graphtemp=.5ex\advance\graphtemp by 0.609in
    \rlap{\kern 0.000in\lower\graphtemp\hbox to 0pt{\hss $G$\hss}}%
    \graphtemp=.5ex\advance\graphtemp by 0.609in
    \rlap{\kern 0.964in\lower\graphtemp\hbox to 0pt{\hss $~$\hss}}%
    \graphtemp=.5ex\advance\graphtemp by 1.188in
    \rlap{\kern 0.680in\lower\graphtemp\hbox to 0pt{\hss $e$\hss}}%
    \graphtemp=.5ex\advance\graphtemp by 0.888in
    \rlap{\kern 0.482in\lower\graphtemp\hbox to 0pt{\hss $D$\hss}}%
    \graphtemp=.5ex\advance\graphtemp by 0.355in
    \rlap{\kern 0.482in\lower\graphtemp\hbox to 0pt{\hss $C$\hss}}%
    \graphtemp=.5ex\advance\graphtemp by 0.609in
    \rlap{\kern 0.989in\lower\graphtemp\hbox to 0pt{\hss $R$\hss}}%
    \graphtemp=.5ex\advance\graphtemp by 0.609in
    \rlap{\kern 2.384in\lower\graphtemp\hbox to 0pt{\hss $\bu$\hss}}%
    \graphtemp=.5ex\advance\graphtemp by 0.609in
    \rlap{\kern 2.638in\lower\graphtemp\hbox to 0pt{\hss $\bu$\hss}}%
    \graphtemp=.5ex\advance\graphtemp by 0.862in
    \rlap{\kern 2.130in\lower\graphtemp\hbox to 0pt{\hss $\bu$\hss}}%
    \graphtemp=.5ex\advance\graphtemp by 0.862in
    \rlap{\kern 2.891in\lower\graphtemp\hbox to 0pt{\hss $\bu$\hss}}%
    \graphtemp=.5ex\advance\graphtemp by 0.355in
    \rlap{\kern 2.130in\lower\graphtemp\hbox to 0pt{\hss $\bu$\hss}}%
    \graphtemp=.5ex\advance\graphtemp by 0.355in
    \rlap{\kern 2.891in\lower\graphtemp\hbox to 0pt{\hss $\bu$\hss}}%
    \graphtemp=.5ex\advance\graphtemp by 0.101in
    \rlap{\kern 2.638in\lower\graphtemp\hbox to 0pt{\hss $\bu$\hss}}%
    \graphtemp=.5ex\advance\graphtemp by 0.862in
    \rlap{\kern 3.145in\lower\graphtemp\hbox to 0pt{\hss $\bu$\hss}}%
    \graphtemp=.5ex\advance\graphtemp by 0.355in
    \rlap{\kern 3.145in\lower\graphtemp\hbox to 0pt{\hss $\bu$\hss}}%
    \graphtemp=.5ex\advance\graphtemp by 1.116in
    \rlap{\kern 2.638in\lower\graphtemp\hbox to 0pt{\hss $\bu$\hss}}%
    \special{pn 11}%
    \special{pa 2384 609}%
    \special{pa 2638 609}%
    \special{fp}%
    \special{pa 2384 609}%
    \special{pa 2891 862}%
    \special{fp}%
    \special{pa 2384 609}%
    \special{pa 2130 355}%
    \special{fp}%
    \special{pa 2891 862}%
    \special{pa 2384 609}%
    \special{fp}%
    \special{pa 2891 862}%
    \special{pa 2638 1116}%
    \special{fp}%
    \special{pa 2891 862}%
    \special{pa 3145 862}%
    \special{fp}%
    \special{pa 2891 355}%
    \special{pa 2638 609}%
    \special{fp}%
    \special{pa 2891 355}%
    \special{pa 2638 101}%
    \special{fp}%
    \special{pa 2891 355}%
    \special{pa 3145 355}%
    \special{fp}%
    \special{pa 2130 862}%
    \special{pa 2638 609}%
    \special{fp}%
    \special{pn 8}%
    \special{ar 2514 488 406 406 -2.808122 -1.260766}%
    \special{ar 2514 729 406 406 1.260766 2.808122}%
    \special{ar 3064 609 266 266 -1.260952 1.260952}%
    \graphtemp=.5ex\advance\graphtemp by 0.609in
    \rlap{\kern 2.232in\lower\graphtemp\hbox to 0pt{\hss $x'$\hss}}%
    \graphtemp=.5ex\advance\graphtemp by 0.959in
    \rlap{\kern 2.059in\lower\graphtemp\hbox to 0pt{\hss $b$\hss}}%
    \graphtemp=.5ex\advance\graphtemp by 0.283in
    \rlap{\kern 2.059in\lower\graphtemp\hbox to 0pt{\hss $a$\hss}}%
    \graphtemp=.5ex\advance\graphtemp by 0.609in
    \rlap{\kern 2.764in\lower\graphtemp\hbox to 0pt{\hss $y'$\hss}}%
    \graphtemp=.5ex\advance\graphtemp by 0.934in
    \rlap{\kern 2.963in\lower\graphtemp\hbox to 0pt{\hss $v$\hss}}%
    \graphtemp=.5ex\advance\graphtemp by 0.283in
    \rlap{\kern 2.963in\lower\graphtemp\hbox to 0pt{\hss $u$\hss}}%
    \graphtemp=.5ex\advance\graphtemp by 0.030in
    \rlap{\kern 2.709in\lower\graphtemp\hbox to 0pt{\hss $d$\hss}}%
    \graphtemp=.5ex\advance\graphtemp by 0.283in
    \rlap{\kern 3.217in\lower\graphtemp\hbox to 0pt{\hss $c$\hss}}%
    \graphtemp=.5ex\advance\graphtemp by 0.934in
    \rlap{\kern 3.217in\lower\graphtemp\hbox to 0pt{\hss $f$\hss}}%
    \graphtemp=.5ex\advance\graphtemp by 0.609in
    \rlap{\kern 3.500in\lower\graphtemp\hbox to 0pt{\hss $H$\hss}}%
    \graphtemp=.5ex\advance\graphtemp by 0.355in
    \rlap{\kern 2.511in\lower\graphtemp\hbox to 0pt{\hss $C'$\hss}}%
    \graphtemp=.5ex\advance\graphtemp by 1.188in
    \rlap{\kern 2.709in\lower\graphtemp\hbox to 0pt{\hss $e$\hss}}%
    \hbox{\vrule depth1.217in width0pt height 0pt}%
    \kern 3.500in
  }%
}%
}
\caption{A vertex in three $g$-cycles.}\label{threeg}
\end{figure}

\vspace{-1pc}
\begin{lemma}\label{step3}
No vertex lies in two $g$-cycles.
\end{lemma}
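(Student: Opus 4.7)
The plan is to assume $y$ lies in two $g$-cycles $C$ and $D$, reduce to a rigid local configuration, use several cards to force specific non-edges to become edges, and then exhibit a $g$-cycle in the alternative reconstruction of $G-\{x,y\}$ that has no counterpart in $G$, contradicting Lemma~\ref{subcount}.

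First I would argue that since $y$ has degree $3$ and each of $C,D$ uses two edges at $y$, the cycles $C$ and $D$ share an edge at $y$; call it $xy$. Lemma~\ref{step0} prohibits them from sharing two consecutive edges, so $xy$ is their only shared edge at $y$. Set $N(x)=\{y,a,b\}$ and $N(y)=\{x,u,v\}$, with $C$ through $\la a,x,y,u\ra$ and $D$ through $\la b,x,y,v\ra$. Applying Lemma~\ref{step1} to the edge $xy$ (which lies in two $g$-cycles) together with Lemma~\ref{step2}, I conclude that $x$ and $y$ each lie in \emph{exactly} two $g$-cycles, namely $C$ and $D$.

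Next, I would set up the alternative reconstruction $H$ from $G-\{x,y\}$ given by Remark~\ref{alt}: $N_H(x')=\{y',a,v\}$ and $N_H(y')=\{x',u,b\}$. The cycles $C$ and $D$ persist as $g$-cycles $C'$ and $D'$ through $x'y'$, and by the "respectively" clause of Lemma~\ref{shortest}, these are the only $g$-cycles through either of $x',y'$ in $H$. Now I turn to the card $G-\{y,u\}$, whose alternative $H_u$ (again by Remark~\ref{alt}) has $N(y^*)=\{u^\dagger,x,u^*\}$ and $N(u^\dagger)=\{y^*,d,v\}$, where $d$ is the neighbor of $u$ on $C$ other than $y$ and $u^*$ is the third neighbor of $u$. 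The cycle $C$ transforms to a $g$-cycle $C^*$, but $D$ breaks (becoming a $(g{+}1)$-cycle, since $yv$ has no counterpart), so by Corollary~\ref{girth} and Lemma~\ref{shortest} there must be a new $g$-cycle through $y^*$ only, containing $\la x,y^*,u^*\ra$. This requires an $x,u^*$-path of length $g-2$ in $H_u-y^*$.

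I would then enumerate the possible $x,u^*$-paths, noting each one forces a specific adjacency in $G$. Most options are ruled out: adjacency like $d\sim u^*$ makes the triangle $u,d,u^*$; $v\sim u^*$ or $m\sim u^*$ (with $m$ the neighbor of $v$ on $D$) produces a third $g$-cycle through $y$, contradicting Lemma~\ref{step2}; and $a^*\sim u^*$ (where $a^*$ is the third neighbor of $a$) creates a $g$-cycle spliced with $C$, contradicting Lemma~\ref{step0}. The only option that survives is $b^*\sim u^*$, where $b^*$ is the third neighbor of $b$. Running the symmetric argument on the card $G-\{y,v\}$ forces $a^*\sim v^*$. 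Both edges are therefore present in $G$.

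Finally, I would return to $H$, the alternative reconstruction of $G-\{x,y\}$. Using the newly forced structure, the cycle $x'-a-a^*-\cdots-v^*-v-x'$ assembled from the edge $ax'$, the path $a-a^*-v^*-v$ of length $3$ in $G-\{x,y\}$, and the edge $vx'$, is the kind of closed walk one obtains; more generally, by carefully splicing the $a\to v^*$ portion with residual structure of $C$ or $D$, I expect to produce a $g$-cycle in $H$ that passes through $x'$ but not $y'$. Since $H$ was already shown to have no such $g$-cycle (its $g$-cycles through $\{x',y'\}$ are only $C'$ and $D'$), and $g$-cycles outside $\{x',y'\}$ correspond bijectively to $g$-cycles of $G$ outside $\{x,y\}$, this yields a $g$-cycle in $H$ with no counterpart in $G$, contradicting Lemma~\ref{subcount}.

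The main obstacle is the last step: verifying in general $g$ that the forced adjacencies $a^*\sim v^*$ and $b^*\sim u^*$ produce a genuine new $g$-cycle of the required length in $H$ (for $g=5$ this is immediate since a path of length $3$ suffices, but for larger $g$ the path $a-a^*-v^*-v$ is too short and one must route through part of $C$ or $D$ carefully). The other delicate point is the "shortcut" case, where an $x,u^*$- or $x,v^*$-path of length $g-2$ uses $u^\dagger$ or $v^\dagger$ as an intermediate vertex; I expect to handle this by showing such a shortcut also forces one of the forbidden adjacencies listed above, so the classification of forced edges is unchanged.
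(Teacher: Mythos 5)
Your overall strategy is genuinely different from the paper's (which deletes $\{a,x\}$ along $C$ and derives a vertex in three $g$-cycles in a further reconstruction, using Lemma~\ref{step2}), but as written it has gaps that are fatal for general girth. The first is at the step where you pass to $G-\{y,u\}$: Lemma~\ref{shortest} preserves only the \emph{total} number of $g$-cycles through exactly one of the two missing vertices, not the number through each one separately. So the replacement for the broken cycle $D$ need not contain $\la x,y^*,u^*\ra$; it could just as well contain $\la d,u^\dagger,v\ra$, i.e., arise from a $d,v$-path of length $g-2$ in $G$, which at this stage of the paper is not forbidden (it only creates a $g'$-cycle in $G$). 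In that case no $x,u^*$-path exists and nothing about $u^*$ is forced. The paper's own proofs of Lemmas~\ref{step2} and~\ref{step3} always split into exactly this pair of cases, and the second case requires a separate argument.

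The second and third gaps you partly acknowledge, but they are not minor loose ends. Your enumeration of $x,u^*$-paths of length $g-2$ classifies them by which of the named vertices $d,v,m,a^*,b^*$ is adjacent to $u^*$; this is exhaustive only when $g=5$, where the path has length $3$ and its internal vertices are confined to $N(a)\cup N(b)$. For $g\ge6$ the penultimate vertex of such a path can lie entirely outside the local configuration, so none of your forbidden adjacencies is triggered and the conclusion $b^*\sim u^*$ (and symmetrically $a^*\sim v^*$) does not follow. Finally, even granting those adjacencies, the closing cycle $\la x',a,a^*,v^*,v,x'\ra$ has length $5$ regardless of $g$, so it is a $g$-cycle only when $g=5$; for larger $g$ you have no construction of the promised $g$-cycle through exactly one of $\{x',y'\}$, and the contradiction with the count from Lemma~\ref{shortest} (zero such cycles, since $x$ and $y$ lie only in $C$ and $D$) is never reached. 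In short, the argument as proposed proves the lemma only for girth $5$; to handle general $g$ you would need a mechanism, like the paper's use of the path $P$ of length $g-3$ and the auxiliary reconstructions from $G-\{x,y\}$ and $G-\{a,z\}$, that transports a long path rather than a single forced edge.
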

\begin{proof}
Since $G$ is $3$-regular, a vertex in $g$-cycles $C$ and $D$ requires an edge
$xy$ in $C$ and $D$ (only one common edge, since there are no spliced
$g$-cycles).  Label vertices as in Remark~\ref{alt}, with $\la z,a,x,y,u\ra$
lying along a $g$-cycle $C$.  Since $G$ has girth $g$, the neighbor $w$ of $a$
that is not on $C$ is not on the other $g$-cycle through $xy$.  Note that $ax$
is not in another $g$-cycle, since that would put $x$ on three $g$-cycles.

Since $ax$ lies in a $g$-cycle, $G-\{a,x\}$ has only one alternative
reconstruction, $H$.  We may label $H$ so the $g$-cycle $C'$ through the
missing vertices $a'$ and $x'$ arises from $C$ by replacing $a$ with $a'$ and
$x$ with $x'$, and so $N_H(a')=\{x',z,b\}$ and $N_H(x')=\{a',y,w\}$
(see Figure~\ref{twog}).

\nobreak
Since $G$ has a $g$-cycle $D$ through exactly one of $\{a,x\}$, also $H$ has a
$g$-cycle $D'$ through exactly one of $\{a',x'\}$.  This cycle cannot use
$a'x'$, so it uses $\la w,x',y\ra$ or $\la b,a',z\ra$.  In each case, we will
obtain an alternative reconstruction from the deck that has three $g$-cycles
containing a single vertex, which by Lemma~\ref{step2} is forbidden.

In the first case, $D'$ cannot use $yu$, since $H$ has no spliced $g$-cycles.
Hence $D'$ uses $yv$, and there is a $v,w$-path $P$ of length $g-3$ in $H$
and $G$ (not using $y$).  Note that $P$ completes a $g'$-cycle $Q$ in $G$ with
$\la w,a,x,y,v\ra$.  Since $xy$ lies on a $g$-cycle, $G-\{x,y\}$ has only
one alternative reconstruction, $H'$ (see Figure~\ref{twog}).  We may label the
missing vertices $x''$ and $y''$ so that $ax'',y''u\in E(H')$, which forces
$vx'',y''b\in E(H')$.  Now replacing $\la w,a,x,y,v\ra$ in $Q$ with
$\la w,a,x'',v\ra$ yields three $g$-cycles in $H'$ containing $x''$.

In the second case, $D'$ must continue after $\la b,a',z\ra$ to
the neighbor $z'$ of $z$ not on $C'$, since $H$ has no spliced $g$-cycles.
Replacing $a'b$ in $D'$ with $\la a,x,b\ra$ yields a $g'$-cycle $R$ in $G$
containing the path $\la z',z,a,x,b\ra$.  Since $az$ lies on a $g$-cycle, there
is a unique alternative reconstruction from $G-\{a,z\}$; call it $H''$.  We may
label $H''$ so its missing vertices $a''$ and $z''$ replace $a$ and $z$ in $C$
to form a $g$-cycle $C''$, and then the remaining edges incident to
$\{a'',z''\}$ must be $z'a''$ and $wz''$, as in Figure~\ref{twog}.
Now replacing $\la z',z,a,x,b\ra$ in $R$ with $\la z',a'',x,b\ra$ yields
three $g$-cycles in $H''$ containing $x$.
\end{proof}

\vspace{-1pc}
\begin{figure}[hbt]
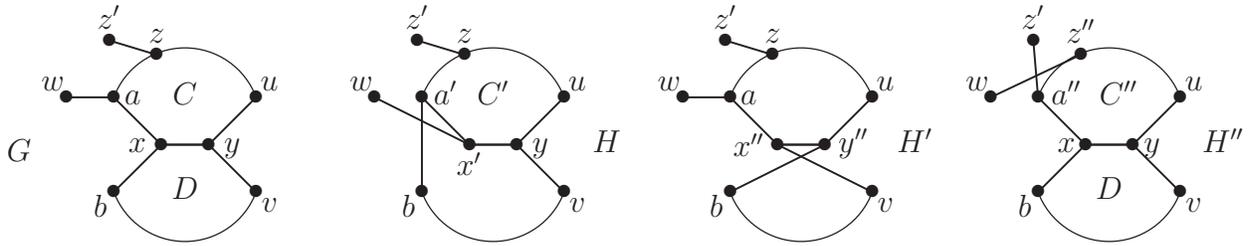

\gpic{
\expandafter\ifx\csname graph\endcsname\relax \csname newbox\endcsname\graph\fi
\expandafter\ifx\csname graphtemp\endcsname\relax \csname newdimen\endcsname\graphtemp\fi
\setbox\graph=\vtop{\vskip 0pt\hbox{%
    \graphtemp=.5ex\advance\graphtemp by 0.645in
    \rlap{\kern 0.844in\lower\graphtemp\hbox to 0pt{\hss $\bu$\hss}}%
    \graphtemp=.5ex\advance\graphtemp by 0.645in
    \rlap{\kern 1.092in\lower\graphtemp\hbox to 0pt{\hss $\bu$\hss}}%
    \graphtemp=.5ex\advance\graphtemp by 0.893in
    \rlap{\kern 0.595in\lower\graphtemp\hbox to 0pt{\hss $\bu$\hss}}%
    \graphtemp=.5ex\advance\graphtemp by 0.893in
    \rlap{\kern 1.340in\lower\graphtemp\hbox to 0pt{\hss $\bu$\hss}}%
    \graphtemp=.5ex\advance\graphtemp by 0.397in
    \rlap{\kern 0.595in\lower\graphtemp\hbox to 0pt{\hss $\bu$\hss}}%
    \graphtemp=.5ex\advance\graphtemp by 0.397in
    \rlap{\kern 1.340in\lower\graphtemp\hbox to 0pt{\hss $\bu$\hss}}%
    \graphtemp=.5ex\advance\graphtemp by 0.397in
    \rlap{\kern 0.347in\lower\graphtemp\hbox to 0pt{\hss $\bu$\hss}}%
    \graphtemp=.5ex\advance\graphtemp by 0.099in
    \rlap{\kern 0.571in\lower\graphtemp\hbox to 0pt{\hss $\bu$\hss}}%
    \special{pn 11}%
    \special{pa 844 645}%
    \special{pa 1092 645}%
    \special{fp}%
    \special{pa 844 645}%
    \special{pa 595 893}%
    \special{fp}%
    \special{pa 844 645}%
    \special{pa 595 397}%
    \special{fp}%
    \special{pa 1092 645}%
    \special{pa 844 645}%
    \special{fp}%
    \special{pa 1092 645}%
    \special{pa 1340 893}%
    \special{fp}%
    \special{pa 1092 645}%
    \special{pa 1340 397}%
    \special{fp}%
    \special{pn 8}%
    \special{ar 968 535 397 397 -2.786171 -0.355421}%
    \special{ar 968 755 397 397 0.355421 2.786171}%
    \special{pn 11}%
    \special{pa 595 397}%
    \special{pa 347 397}%
    \special{fp}%
    \graphtemp=.5ex\advance\graphtemp by 0.645in
    \rlap{\kern 0.719in\lower\graphtemp\hbox to 0pt{\hss $x$\hss}}%
    \graphtemp=.5ex\advance\graphtemp by 0.988in
    \rlap{\kern 0.525in\lower\graphtemp\hbox to 0pt{\hss $b$\hss}}%
    \graphtemp=.5ex\advance\graphtemp by 0.397in
    \rlap{\kern 0.695in\lower\graphtemp\hbox to 0pt{\hss $a$\hss}}%
    \graphtemp=.5ex\advance\graphtemp by 0.645in
    \rlap{\kern 1.216in\lower\graphtemp\hbox to 0pt{\hss $y$\hss}}%
    \graphtemp=.5ex\advance\graphtemp by 0.963in
    \rlap{\kern 1.410in\lower\graphtemp\hbox to 0pt{\hss $v$\hss}}%
    \graphtemp=.5ex\advance\graphtemp by 0.327in
    \rlap{\kern 1.410in\lower\graphtemp\hbox to 0pt{\hss $u$\hss}}%
    \graphtemp=.5ex\advance\graphtemp by 0.397in
    \rlap{\kern 0.968in\lower\graphtemp\hbox to 0pt{\hss $C$\hss}}%
    \graphtemp=.5ex\advance\graphtemp by 0.893in
    \rlap{\kern 0.968in\lower\graphtemp\hbox to 0pt{\hss $D$\hss}}%
    \graphtemp=.5ex\advance\graphtemp by 0.695in
    \rlap{\kern 0.099in\lower\graphtemp\hbox to 0pt{\hss $G$\hss}}%
    \graphtemp=.5ex\advance\graphtemp by 0.174in
    \rlap{\kern 0.819in\lower\graphtemp\hbox to 0pt{\hss $\bu$\hss}}%
    \graphtemp=.5ex\advance\graphtemp by 0.000in
    \rlap{\kern 0.571in\lower\graphtemp\hbox to 0pt{\hss $z'$\hss}}%
    \graphtemp=.5ex\advance\graphtemp by 0.074in
    \rlap{\kern 0.819in\lower\graphtemp\hbox to 0pt{\hss $z$\hss}}%
    \graphtemp=.5ex\advance\graphtemp by 0.327in
    \rlap{\kern 0.277in\lower\graphtemp\hbox to 0pt{\hss $w$\hss}}%
    \special{pa 571 99}%
    \special{pa 819 174}%
    \special{fp}%
    \graphtemp=.5ex\advance\graphtemp by 0.645in
    \rlap{\kern 2.456in\lower\graphtemp\hbox to 0pt{\hss $\bu$\hss}}%
    \graphtemp=.5ex\advance\graphtemp by 0.645in
    \rlap{\kern 2.704in\lower\graphtemp\hbox to 0pt{\hss $\bu$\hss}}%
    \graphtemp=.5ex\advance\graphtemp by 0.893in
    \rlap{\kern 2.208in\lower\graphtemp\hbox to 0pt{\hss $\bu$\hss}}%
    \graphtemp=.5ex\advance\graphtemp by 0.893in
    \rlap{\kern 2.952in\lower\graphtemp\hbox to 0pt{\hss $\bu$\hss}}%
    \graphtemp=.5ex\advance\graphtemp by 0.397in
    \rlap{\kern 2.208in\lower\graphtemp\hbox to 0pt{\hss $\bu$\hss}}%
    \graphtemp=.5ex\advance\graphtemp by 0.397in
    \rlap{\kern 2.952in\lower\graphtemp\hbox to 0pt{\hss $\bu$\hss}}%
    \graphtemp=.5ex\advance\graphtemp by 0.397in
    \rlap{\kern 1.960in\lower\graphtemp\hbox to 0pt{\hss $\bu$\hss}}%
    \graphtemp=.5ex\advance\graphtemp by 0.099in
    \rlap{\kern 2.183in\lower\graphtemp\hbox to 0pt{\hss $\bu$\hss}}%
    \special{pa 2456 645}%
    \special{pa 2704 645}%
    \special{fp}%
    \special{pa 2456 645}%
    \special{pa 1960 397}%
    \special{fp}%
    \special{pa 2456 645}%
    \special{pa 2208 397}%
    \special{fp}%
    \special{pa 2704 645}%
    \special{pa 2456 645}%
    \special{fp}%
    \special{pa 2704 645}%
    \special{pa 2952 893}%
    \special{fp}%
    \special{pa 2704 645}%
    \special{pa 2952 397}%
    \special{fp}%
    \special{pn 8}%
    \special{ar 2580 535 397 397 -2.786171 -0.355421}%
    \special{ar 2580 755 397 397 0.355421 2.786171}%
    \special{pn 11}%
    \special{pa 2208 397}%
    \special{pa 2208 893}%
    \special{fp}%
    \graphtemp=.5ex\advance\graphtemp by 0.769in
    \rlap{\kern 2.456in\lower\graphtemp\hbox to 0pt{\hss $x'$\hss}}%
    \graphtemp=.5ex\advance\graphtemp by 0.988in
    \rlap{\kern 2.138in\lower\graphtemp\hbox to 0pt{\hss $b$\hss}}%
    \graphtemp=.5ex\advance\graphtemp by 0.397in
    \rlap{\kern 2.332in\lower\graphtemp\hbox to 0pt{\hss $a'$\hss}}%
    \graphtemp=.5ex\advance\graphtemp by 0.645in
    \rlap{\kern 2.828in\lower\graphtemp\hbox to 0pt{\hss $y$\hss}}%
    \graphtemp=.5ex\advance\graphtemp by 0.963in
    \rlap{\kern 3.022in\lower\graphtemp\hbox to 0pt{\hss $v$\hss}}%
    \graphtemp=.5ex\advance\graphtemp by 0.327in
    \rlap{\kern 3.022in\lower\graphtemp\hbox to 0pt{\hss $u$\hss}}%
    \graphtemp=.5ex\advance\graphtemp by 0.397in
    \rlap{\kern 2.580in\lower\graphtemp\hbox to 0pt{\hss $C'$\hss}}%
    \graphtemp=.5ex\advance\graphtemp by 0.893in
    \rlap{\kern 2.580in\lower\graphtemp\hbox to 0pt{\hss $~$\hss}}%
    \graphtemp=.5ex\advance\graphtemp by 0.645in
    \rlap{\kern 3.176in\lower\graphtemp\hbox to 0pt{\hss $H$\hss}}%
    \graphtemp=.5ex\advance\graphtemp by 0.174in
    \rlap{\kern 2.431in\lower\graphtemp\hbox to 0pt{\hss $\bu$\hss}}%
    \graphtemp=.5ex\advance\graphtemp by 0.000in
    \rlap{\kern 2.183in\lower\graphtemp\hbox to 0pt{\hss $z'$\hss}}%
    \graphtemp=.5ex\advance\graphtemp by 0.074in
    \rlap{\kern 2.431in\lower\graphtemp\hbox to 0pt{\hss $z$\hss}}%
    \graphtemp=.5ex\advance\graphtemp by 0.327in
    \rlap{\kern 1.890in\lower\graphtemp\hbox to 0pt{\hss $w$\hss}}%
    \special{pa 2183 99}%
    \special{pa 2431 174}%
    \special{fp}%
    \graphtemp=.5ex\advance\graphtemp by 0.645in
    \rlap{\kern 4.069in\lower\graphtemp\hbox to 0pt{\hss $\bu$\hss}}%
    \graphtemp=.5ex\advance\graphtemp by 0.645in
    \rlap{\kern 4.317in\lower\graphtemp\hbox to 0pt{\hss $\bu$\hss}}%
    \graphtemp=.5ex\advance\graphtemp by 0.893in
    \rlap{\kern 3.821in\lower\graphtemp\hbox to 0pt{\hss $\bu$\hss}}%
    \graphtemp=.5ex\advance\graphtemp by 0.893in
    \rlap{\kern 4.565in\lower\graphtemp\hbox to 0pt{\hss $\bu$\hss}}%
    \graphtemp=.5ex\advance\graphtemp by 0.397in
    \rlap{\kern 3.821in\lower\graphtemp\hbox to 0pt{\hss $\bu$\hss}}%
    \graphtemp=.5ex\advance\graphtemp by 0.397in
    \rlap{\kern 4.565in\lower\graphtemp\hbox to 0pt{\hss $\bu$\hss}}%
    \graphtemp=.5ex\advance\graphtemp by 0.397in
    \rlap{\kern 3.573in\lower\graphtemp\hbox to 0pt{\hss $\bu$\hss}}%
    \graphtemp=.5ex\advance\graphtemp by 0.099in
    \rlap{\kern 3.796in\lower\graphtemp\hbox to 0pt{\hss $\bu$\hss}}%
    \special{pa 4069 645}%
    \special{pa 4317 645}%
    \special{fp}%
    \special{pa 4069 645}%
    \special{pa 4565 893}%
    \special{fp}%
    \special{pa 4069 645}%
    \special{pa 3821 397}%
    \special{fp}%
    \special{pa 4317 645}%
    \special{pa 4069 645}%
    \special{fp}%
    \special{pa 4317 645}%
    \special{pa 3821 893}%
    \special{fp}%
    \special{pa 4317 645}%
    \special{pa 4565 397}%
    \special{fp}%
    \special{pn 8}%
    \special{ar 4193 535 397 397 -2.786171 -0.355421}%
    \special{ar 4193 755 397 397 0.355421 2.786171}%
    \special{pn 11}%
    \special{pa 3821 397}%
    \special{pa 3573 397}%
    \special{fp}%
    \graphtemp=.5ex\advance\graphtemp by 0.670in
    \rlap{\kern 3.920in\lower\graphtemp\hbox to 0pt{\hss $x''$\hss}}%
    \graphtemp=.5ex\advance\graphtemp by 0.988in
    \rlap{\kern 3.750in\lower\graphtemp\hbox to 0pt{\hss $b$\hss}}%
    \graphtemp=.5ex\advance\graphtemp by 0.397in
    \rlap{\kern 3.920in\lower\graphtemp\hbox to 0pt{\hss $a$\hss}}%
    \graphtemp=.5ex\advance\graphtemp by 0.645in
    \rlap{\kern 4.466in\lower\graphtemp\hbox to 0pt{\hss $y''$\hss}}%
    \graphtemp=.5ex\advance\graphtemp by 0.963in
    \rlap{\kern 4.635in\lower\graphtemp\hbox to 0pt{\hss $v$\hss}}%
    \graphtemp=.5ex\advance\graphtemp by 0.327in
    \rlap{\kern 4.635in\lower\graphtemp\hbox to 0pt{\hss $u$\hss}}%
    \graphtemp=.5ex\advance\graphtemp by 0.397in
    \rlap{\kern 4.193in\lower\graphtemp\hbox to 0pt{\hss $~$\hss}}%
    \graphtemp=.5ex\advance\graphtemp by 0.893in
    \rlap{\kern 4.193in\lower\graphtemp\hbox to 0pt{\hss $~$\hss}}%
    \graphtemp=.5ex\advance\graphtemp by 0.645in
    \rlap{\kern 4.788in\lower\graphtemp\hbox to 0pt{\hss $H'$\hss}}%
    \graphtemp=.5ex\advance\graphtemp by 0.174in
    \rlap{\kern 4.044in\lower\graphtemp\hbox to 0pt{\hss $\bu$\hss}}%
    \graphtemp=.5ex\advance\graphtemp by 0.000in
    \rlap{\kern 3.796in\lower\graphtemp\hbox to 0pt{\hss $z'$\hss}}%
    \graphtemp=.5ex\advance\graphtemp by 0.074in
    \rlap{\kern 4.044in\lower\graphtemp\hbox to 0pt{\hss $z$\hss}}%
    \graphtemp=.5ex\advance\graphtemp by 0.327in
    \rlap{\kern 3.502in\lower\graphtemp\hbox to 0pt{\hss $w$\hss}}%
    \special{pa 3796 99}%
    \special{pa 4044 174}%
    \special{fp}%
    \graphtemp=.5ex\advance\graphtemp by 0.645in
    \rlap{\kern 5.681in\lower\graphtemp\hbox to 0pt{\hss $\bu$\hss}}%
    \graphtemp=.5ex\advance\graphtemp by 0.645in
    \rlap{\kern 5.929in\lower\graphtemp\hbox to 0pt{\hss $\bu$\hss}}%
    \graphtemp=.5ex\advance\graphtemp by 0.893in
    \rlap{\kern 5.433in\lower\graphtemp\hbox to 0pt{\hss $\bu$\hss}}%
    \graphtemp=.5ex\advance\graphtemp by 0.893in
    \rlap{\kern 6.177in\lower\graphtemp\hbox to 0pt{\hss $\bu$\hss}}%
    \graphtemp=.5ex\advance\graphtemp by 0.397in
    \rlap{\kern 5.433in\lower\graphtemp\hbox to 0pt{\hss $\bu$\hss}}%
    \graphtemp=.5ex\advance\graphtemp by 0.397in
    \rlap{\kern 6.177in\lower\graphtemp\hbox to 0pt{\hss $\bu$\hss}}%
    \graphtemp=.5ex\advance\graphtemp by 0.397in
    \rlap{\kern 5.185in\lower\graphtemp\hbox to 0pt{\hss $\bu$\hss}}%
    \graphtemp=.5ex\advance\graphtemp by 0.099in
    \rlap{\kern 5.408in\lower\graphtemp\hbox to 0pt{\hss $\bu$\hss}}%
    \special{pa 5681 645}%
    \special{pa 5929 645}%
    \special{fp}%
    \special{pa 5681 645}%
    \special{pa 5433 893}%
    \special{fp}%
    \special{pa 5681 645}%
    \special{pa 5433 397}%
    \special{fp}%
    \special{pa 5929 645}%
    \special{pa 5681 645}%
    \special{fp}%
    \special{pa 5929 645}%
    \special{pa 6177 893}%
    \special{fp}%
    \special{pa 5929 645}%
    \special{pa 6177 397}%
    \special{fp}%
    \special{pn 8}%
    \special{ar 5805 535 397 397 -2.786171 -0.355421}%
    \special{ar 5805 755 397 397 0.355421 2.786171}%
    \graphtemp=.5ex\advance\graphtemp by 0.645in
    \rlap{\kern 5.582in\lower\graphtemp\hbox to 0pt{\hss $x$\hss}}%
    \graphtemp=.5ex\advance\graphtemp by 0.988in
    \rlap{\kern 5.363in\lower\graphtemp\hbox to 0pt{\hss $b$\hss}}%
    \graphtemp=.5ex\advance\graphtemp by 0.397in
    \rlap{\kern 5.582in\lower\graphtemp\hbox to 0pt{\hss $a''$\hss}}%
    \graphtemp=.5ex\advance\graphtemp by 0.645in
    \rlap{\kern 6.029in\lower\graphtemp\hbox to 0pt{\hss $y$\hss}}%
    \graphtemp=.5ex\advance\graphtemp by 0.963in
    \rlap{\kern 6.248in\lower\graphtemp\hbox to 0pt{\hss $v$\hss}}%
    \graphtemp=.5ex\advance\graphtemp by 0.327in
    \rlap{\kern 6.248in\lower\graphtemp\hbox to 0pt{\hss $u$\hss}}%
    \graphtemp=.5ex\advance\graphtemp by 0.397in
    \rlap{\kern 5.855in\lower\graphtemp\hbox to 0pt{\hss $C''$\hss}}%
    \graphtemp=.5ex\advance\graphtemp by 0.893in
    \rlap{\kern 5.805in\lower\graphtemp\hbox to 0pt{\hss $D$\hss}}%
    \graphtemp=.5ex\advance\graphtemp by 0.645in
    \rlap{\kern 6.401in\lower\graphtemp\hbox to 0pt{\hss $H''$\hss}}%
    \graphtemp=.5ex\advance\graphtemp by 0.174in
    \rlap{\kern 5.656in\lower\graphtemp\hbox to 0pt{\hss $\bu$\hss}}%
    \graphtemp=.5ex\advance\graphtemp by 0.000in
    \rlap{\kern 5.408in\lower\graphtemp\hbox to 0pt{\hss $z'$\hss}}%
    \graphtemp=.5ex\advance\graphtemp by 0.074in
    \rlap{\kern 5.656in\lower\graphtemp\hbox to 0pt{\hss $z''$\hss}}%
    \graphtemp=.5ex\advance\graphtemp by 0.327in
    \rlap{\kern 5.115in\lower\graphtemp\hbox to 0pt{\hss $w$\hss}}%
    \special{pn 11}%
    \special{pa 5433 397}%
    \special{pa 5408 99}%
    \special{fp}%
    \special{pa 5185 397}%
    \special{pa 5656 174}%
    \special{fp}%
    \hbox{\vrule depth1.152in width0pt height 0pt}%
    \kern 6.500in
  }%
}%
}
\caption{Two $g$-cycles $C$ and $D$ with a common vertex.}\label{twog}
\end{figure}

\begin{lemma}\label{step4}
No two cycles of length at most $g'$ are spliced.
\end{lemma}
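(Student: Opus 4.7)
Suppose for contradiction $G$ contains two cycles of length at most $g'$ sharing two consecutive edges. By Lemma~\ref{step0} they are not both $g$-cycles, and a $g$-cycle and a $g'$-cycle cannot share three or more consecutive edges, so only two cases remain. In case (A) a $g$-cycle $C$ and a $g'$-cycle $D$ share exactly two consecutive edges; in case (B) two $g'$-cycles $C_1,C_2$ share at least two consecutive edges. I will dispose of these in turn.

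For case (A), let the shared path be $\langle\alpha,\beta,\gamma\rangle$ with interior vertex $\beta$ and endpoints $\alpha,\gamma$. I would delete $\{\beta,\gamma\}$; because $\beta\gamma$ lies on the $g$-cycle $C$, Lemma~\ref{shortest} gives a unique alternative reconstruction $H$, and Remark~\ref{alt} determines the edges at the new vertices $\beta',\gamma'$ explicitly. Then $C$ reappears as a shifted $g$-cycle $C^*$ through $\beta',\gamma'$. Using the forced new edge $\beta' c_D$, where $c_D$ is the $D$-neighbor of $\gamma$ outside $C$, the cycle $D$ is shortened to a $g$-cycle $D^*=\alpha-\beta'-c_D-\cdots-a_D-\alpha$ of length $(g'-3)+3=g$. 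Both $C^*$ and $D^*$ pass through $\alpha$ but are distinct (they use different neighbors of $\alpha$ apart from $\beta'$, namely $a_C$ on $C$ versus $a_D$ on $D$), so Lemma~\ref{step3}, applied to $H$ via Observation~\ref{altsame}, is contradicted.

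For case (B), let the shared path be $P=\langle p_0,\ldots,p_\ell\rangle$ with $\ell\ge 2$. First I would observe, via case~(A), that no edge of $P$ lies on a $g$-cycle: any such $g$-cycle would be forced at an interior vertex of $P$ to use both $P$-edges there, splicing with $C_1$ or $C_2$, contradicting case~(A). Then for $\ell\ge 3$ I would delete the interior pair $\{p_1,p_2\}$, and invoke Lemma~\ref{twopair}: the alternative pairing $\{p_0,p_3\},\{v_1,v_2\}$ of the four $2$-vertices produces a reconstruction in which both $C_1$ and $C_2$ are shortened to distinct $g$-cycles $K_1,K_2$ sharing the two consecutive new edges $p_0 p_1'$ and $p_1' p_3$. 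These spliced $g$-cycles contradict Lemma~\ref{step0} applied to this alternative via Observation~\ref{altsame}; the remaining alternative pairing is disposed of symmetrically or by a girth violation.

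The subtlest case is (B) with $\ell=2$, that is, shared path $\langle p_0,p_1,p_2\rangle$: no edge of $P$ lies on a $g$-cycle, so Lemma~\ref{shortest} does not apply, and no single alternative pairing produces a shortcut simultaneously on $C_1$ and $C_2$. Here I would delete $\{p_0,p_1\}$ and use the alternative pairing $\{a_1,p_2\},\{a_2,v_1\}$ from Lemma~\ref{twopair}, which creates a new $g$-cycle $K$ shortened from $C_1$ through $p_0'$ and $p_2$. The case-(A) restriction forces any $g$-cycle through $p_2$ in $G$ to use the edges $p_2 c_1$ and $p_2 c_2$; such a $g$-cycle survives the local modification, giving a second $g$-cycle through $p_2$ in the alternative and contradicting Lemma~\ref{step3}. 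The residual subcase, when no $g$-cycle through $p_2$ exists in $G$, would be closed by a cycle-counting argument against Corollary~\ref{girth}, since the alternative would then strictly increase the number of $g$-cycles. This final subcase is the main obstacle, as it requires that the forced-alternative and deck-count reasoning combine cleanly to rule out the most symmetric configuration.
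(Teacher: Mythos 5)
Your case (A) is correct and is essentially the paper's own first step: delete an interior/endpoint pair of the shared path, use Lemma~\ref{shortest} and Remark~\ref{alt} to pin down the unique alternative, and observe that the $g'$-cycle shortens to a second $g$-cycle through the new vertex, contradicting Lemma~\ref{step3}. The trouble is in case (B). For $\ell\ge3$ you exclude only one of the two non-$G$ pairings of the card $G-\{p_1,p_2\}$; since no edge incident to the deleted pair is known to lie on a $g$-cycle, Lemma~\ref{shortest} does not select a unique alternative, and the true alternative reconstruction may well be the pairing $\{p_0,v_2\},\{p_3,v_1\}$. That pairing is not symmetric to $\{p_0,p_3\},\{v_1,v_2\}$ and is not killed by girth: it reconstitutes both $C_1$ and $C_2$ as $g'$-cycles through $\langle p_0,p_1',p_2',p_3\rangle$, so no forbidden structure appears and the argument stalls.

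The more serious gap is the ``residual subcase'' of $\ell=2$, which you correctly identify as the main obstacle but do not actually close. If $G$ has no $g$-cycle through $p_2$, the count of $g$-cycles forced by Corollary~\ref{girth} does \emph{not} increase: $G$ may contain a $g$-cycle $Q$ through $\langle a_1,p_0,a_2\rangle$ (case (A) does not forbid this, since $Q$ shares only single, non-consecutive edges with $C_1$ and $C_2$), and the re-pairing destroys $Q$ (it becomes a $g'$-cycle through $\langle a_1,p_0',p_1',a_2\rangle$) exactly as it creates $K$, so the counts balance. Indeed, the paper's counting step at this point \emph{concludes} that such a $Q$ exists rather than reaching a contradiction; the configuration ``a $g$-cycle through $\langle a_1,p_0,a_2\rangle$ but none through $p_2$'' is the hard core of the lemma. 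The paper eliminates it only by running the whole argument a second time on the card $G-\{p_1,p_2\}$ to force a $g$-cycle through $\langle c_1,p_2,c_2\rangle$ as well, and then exhibiting two $g$-cycles of $H$ on a common edge. Your proposal omits this second deletion, so the most symmetric configuration survives. (A smaller point: even when a $g$-cycle through $\langle c_1,p_2,c_2\rangle$ does exist, its survival into the modified graph requires it to avoid $p_0$, which you have not verified.)
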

\begin{proof}
Already from Lemma~\ref{step0} no two $g$-cycles are spliced.

Next consider a spliced $g$-cycle $C$ and $g'$-cycle $D$ sharing the path
$\la x,y,u\ra$ such that the other neighbors of $x$ are $a$ on $C$ and $b$
on $D$.  Since $xy$ lies on a $g$-cycle, $G-\{x,y\}$ has only one alternative
reconstruction $H$, expressible so that the $g$-cycle $C'$ through the two
missing vertices $x'$ and $y'$ is obtained from $C$ by replacing $x$ with $x'$
and $y$ with $y'$.  In $H$ we then also have the edge $y'b$.  Now replacing
$\la b,x,y\ra$ with $by'$ in $D$ yields a second $g$-cycle in $H$ containing
$y'u$; this contradicts Lemma~\ref{step3}.  Hence a $g$-cycle and $g'$-cycle
cannot be spliced.

\nobreak
Now let $C$ and $D$ be two $g'$-cycles sharing $\la x,y,u\ra$, defining
$a$ and $b$ as above.  Let $v$ be the neighbor of $y$ not in $C\cup D$.
By Lemma~\ref{step0}, $C$ and $D$ cannot share three consecutive edges, so we
may let $c$ and $d$ be the neighbors of $u$ other than $y$ on $C$ and $D$,
respectively.

Consider $G-\{x,y\}$, and let $H$ be an alternative reconstruction whose
vertices deleted to form $G-\{x,y\}$ are $x'$ and $y'$.  We know $x'y'\in E(H)$,
and we can label $x'$ and $y'$ so that $y'u,x'v\in E(H)$.  The remaining
neighbor of $x'$ may be $a$ or $b$, but since $C$ and $D$ are both $g'$-cycles
these choices are symmetric.  Hence we may assume $ax',by'\in E(H)$
(see Figure~\ref{splice}).

Replacing $\la b,x,y,u\ra$ in $D$ with $\la b, y',u\ra$ yields a $g$-cycle
$D'$ in $H$ containing exactly one of $\{x',y'\}$.  Hence $G$ must have a
$g$-cycle $Q$ containing exactly one of $\{x,y\}$.  Since $Q$ cannot contain
$xy$, it contains $\la v,y,u\ra$ or $\la a,x,b\ra$.  In the first case,
continuing $Q$ along the edge leaving $u$ in either $C$ or $D$ yields two
spliced $g$-cycles ($Q$ with $C$ or $D$), which is forbidden.  Hence $Q$
contains $\la a,x,b\ra$.

Applying the symmetric argument to $G-\{u,y\}$ allows us to conclude that
$G$ also contains a $g$-cycle through $\la c,u,d\ra$.  This $g$-cycle $R$ also
appears in $H$.  Now $R$ and $D'$ are $g$-cycles in $H$ that both contain the
edge $ud$.  By Lemma~\ref{step3}, this is forbidden.
\end{proof}

\begin{lemma}\label{step5}
There is no edge whose endpoints lie in distinct $g$-cycles.
\end{lemma}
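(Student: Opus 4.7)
We argue by contradiction: assume $xy \in E(G)$ with $x$ on a $g$-cycle $C$ and $y$ on a distinct $g$-cycle $D$. By Lemma~\ref{step3}, $C$ and $D$ are the unique $g$-cycles through $x$ and $y$, and $xy$ lies on no $g$-cycle. Label the neighbors of $x$ on $C$ as $a$ and $b$, those of $y$ on $D$ as $u$ and $v$, the neighbor of $a$ on $C$ other than $x$ as $z$, and the third neighbor of $a$ as $w$. Since $ax$ is an edge of the $g$-cycle $C$, Lemma~\ref{shortest} gives a unique alternative reconstruction $H$ of the card $G-\{a,x\}$; by Remark~\ref{alt}, $H$ arises from $G$ by removing the edges $\{aw,xy\}$ and inserting the edges $\{ay,xw\}$ (with $a,x$ relabeled $a',x'$). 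In $H$, the edge $a'y$ now connects the $g$-cycle $C'$ (replacing $C$) to the $g$-cycle $D$, so the same bad-edge configuration persists in $H$.

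Because $G$ has no $g$-cycle through $aw$ or $xy$, and Lemma~\ref{step3} applied to $H$ gives the analogous statement for $\{ay,xw\}$, the sets of $g$-cycles of $G$ and $H$ coincide as edge sets. Lemma~\ref{subcount} applied to $g'$-cycles then forces $G$ and $H$ to have equal counts of $g'$-cycles using the swapped edges. Analyzing the alternative reconstruction of the card $G-\{x,y\}$, which is also realized by $H$, together with the non-splicing constraint of Lemma~\ref{step4}, shows that $G$ contains a $g'$-cycle through $xy$ such as $E_2 = b,x,y,u,u',\ldots,b',b$ whose internal $b'$-to-$u'$ path in $G-\{x,y\}$ has length $g-4$ (forced by avoiding splicing with $C$ at $b$ and with $D$ at $u$). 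Matching counts, $H$ must contain a $g'$-cycle $Q$ that uses exactly one of $\{ay,xw\}$.

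The crux is a case analysis on the edges of $Q$ at its swap-edge endpoints. If $Q$ uses $ay$ together with the $C'$-edge $ax$ at $a$, then $Q$ cannot leave $x$ via $xw$ (that would require the other swapped edge), so $Q$ leaves $x$ via $xb$; then $Q$ and $C'$ share the two consecutive edges $bx$ and $xa$ at $x$, contradicting Lemma~\ref{step4} applied to $H$. The symmetric subcase in which $Q$ uses $xw$ together with $xa$ at $x$ similarly splices with $C'$ at $a$ via the edges $az$ and $ax$. The remaining subcases, where $Q$ uses $az$ at $a$ or $xb$ at $x$ along with its chosen swapped edge, are eliminated by applying the same analysis to a second card such as $G-\{a,z\}$: its unique alternative reconstruction must again coincide with $H$, and matching the two descriptions of $H$ pins down the structure enough to produce another splice forbidden by Lemma~\ref{step4} (or, in a degenerate case, a second $g$-cycle at some vertex forbidden by Lemma~\ref{step3}).

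The main obstacle is precisely this coordinated case analysis: although several configurations of $Q$ splice immediately with $C'$ and are handled at once, the ``safe'' configurations demand that we combine the alternative-reconstruction information from two different edge cards on $C$, exploiting the uniqueness of $G$'s alternative reconstruction to force consistency and thereby derive the splicing in every case.
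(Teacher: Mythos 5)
Your opening moves are fine: by Lemma~\ref{step3} the edge $xy$ lies on no $g$-cycle, Lemma~\ref{shortest} and Remark~\ref{alt} do give the explicit two-edge-swap description of the unique alternative completion $H$ of $G-\{a,x\}$, and matching the (reconstructible) count of $g'$-cycles does show $H$ has a $g'$-cycle meeting the new edges, granting your (essentially correct, though loosely justified) claim that $G$ has a $g'$-cycle through $xy$. But the crux has two real gaps. First, ``$H$ must contain a $g'$-cycle $Q$ that uses \emph{exactly one} of $\{a'y,x'w\}$'' is not forced by the counting: the count only gives a $g'$-cycle using \emph{at least one} swapped edge, and a $g'$-cycle of $H$ through both, e.g.\ one containing $\langle y,a',x',w\rangle$, cannot be excluded at this stage --- it merely shares the single edge $a'x'$ with the $g$-cycle $C'$, and a $g$-cycle and a $g'$-cycle sharing one edge is only outlawed by Lemma~\ref{step8}, which is proved \emph{after} (and using) the present lemma. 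Second, your ``remaining subcases'' ($Q$ leaving $a'$ via $a'z$, or symmetrically $x'$ via $x'b$) are exactly the hard part, and the plan you sketch for them is not an argument: the unique alternative completion of $G-\{a,z\}$ ``coincides with $H$'' only up to isomorphism (via the global assumption that the deck has a reconstruction other than $G$), and an unspecified isomorphism gives you no vertex-by-vertex identification from which to ``match the two descriptions'' and read off a splice. The same objection applies to your earlier phrase that the alternative reconstruction of $G-\{x,y\}$ ``is also realized by $H$.'' The paper never needs such an identification; it only transfers isomorphism-invariant information (cycle counts through the missing vertices, forbidden local configurations).

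For comparison, the paper's route through the hard part is different and is what your sketch is missing: from the card $G-\{x,y\}$ it uses the $g$-cycle count together with Lemma~\ref{step3} to force, in the alternative completion, two $g$-cycles through $\langle a,x',u\rangle$ and $\langle b,y',v\rangle$, which translate back to \emph{two specific} $g'$-cycles $Q,R$ of $G$ through $xy$ pairing a $C$-neighbor of $x$ with a $D$-neighbor of $y$; it then deletes $\{a,z\}$ where $z$ is the neighbor of $a$ \emph{on $Q$} (not on $C$, as in your choice), so that after the forced edges $a'x$ and $wz'$ the only remaining freedom is where the single vertex $t$ (the next vertex of $Q$) attaches, and both options are killed by Lemma~\ref{step4} (a splice) or Lemma~\ref{step3} (two $g$-cycles sharing $uy$). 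Without an argument of this kind, your deferred subcases --- and the ``both swapped edges'' case --- remain open, so the proof as proposed is incomplete.
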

\begin{proof}
Let $xy$ be such an edge in $G$, joining $g$-cycles $C$ through $x$ and $D$
through $y$.  In any alternative reconstruction $H$ from $G-\{x,y\}$, the
missing vertices $x'$ and $y'$ are adjacent, and to avoid recreating $G$ each
of $x'$ and $y'$ must have one neighbor in $V(C)$ and one neighbor in $V(D)$.
Each possible assignment yields in $H$ two $g'$-cycles containing $x'y'$.  By
symmetry, we may label $C-x$ as an $a,b$-path and $D-y$ as a $u,v$-path
so that $a,u\in N_H(x')$ and $b,v\in N_H(y')$.  See Figure~\ref{distance1},
where we have not yet established the dashed edges.

Since $G$ had two $g$-cycles each containing exactly one of $\{x,y\}$, also $H$
must have two $g$-cycles each containing exactly one of $\{x',y'\}$.  One must
use $\la a,x',u\ra$, and the other must use $\la b,y',v\ra$; let these be $Q'$
and $R'$, respectively.  Replacing $\la a,x',u\ra$ in $Q'$ with $\la a,x,y,u\ra$
and $\la b,y',v\ra$ in $R'$ with $\la b,x,y,v\ra$ yields $g'$-cycles $Q$ and
$R$ in $G$, respectively.

Now let $z$ and $w$ be the neighbors of $a$ other than $x$ in $Q$ and $C$,
respectively, and let $t$ be the neighbor of $z$ on $Q$ other than $a$.
Consider an alternative reconstruction $H'$ from $G-\{a,z\}$, with $a'$ and
$z'$ being the missing vertices.  We have $a'z'\in E(H')$.  By symmetry we may
assume $a'x\in E(H')$, and hence $wz'\in E(H')$ to avoid recreating $G$.  Still
$t$ may be ajacent to $z'$ or to $a'$.  The edge $z't$ would complete a
$g'$-cycle $Q''$ in $H'$ (shown below) that is spliced with $C'$, forbidden by
Lemma~\ref{step4}.  The edge $a't$ would complete a $g$-cycle in $H'$ sharing
the edge $uy$ with the $g$-cycle $R$, forbidden by Lemma~\ref{step3}.
\end{proof}

\vspace{-2pc}
\begin{figure}[hbt]
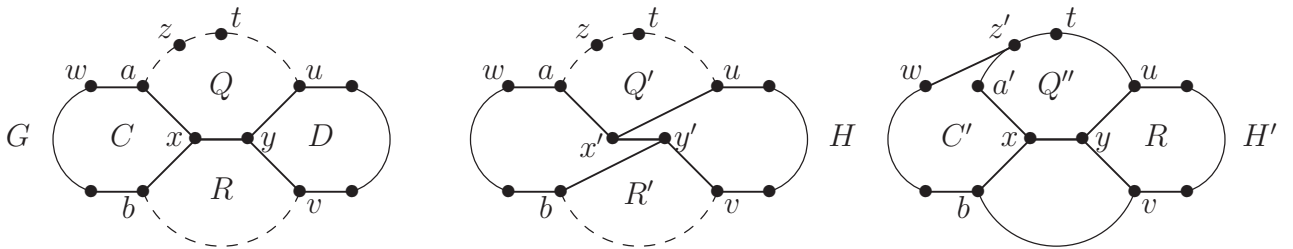

\gpic{
\expandafter\ifx\csname graph\endcsname\relax \csname newbox\endcsname\graph\fi
\expandafter\ifx\csname graphtemp\endcsname\relax \csname newdimen\endcsname\graphtemp\fi
\setbox\graph=\vtop{\vskip 0pt\hbox{%
    \graphtemp=.5ex\advance\graphtemp by 0.655in
    \rlap{\kern 0.929in\lower\graphtemp\hbox to 0pt{\hss $\bu$\hss}}%
    \graphtemp=.5ex\advance\graphtemp by 0.655in
    \rlap{\kern 1.202in\lower\graphtemp\hbox to 0pt{\hss $\bu$\hss}}%
    \graphtemp=.5ex\advance\graphtemp by 0.929in
    \rlap{\kern 0.655in\lower\graphtemp\hbox to 0pt{\hss $\bu$\hss}}%
    \graphtemp=.5ex\advance\graphtemp by 0.929in
    \rlap{\kern 1.475in\lower\graphtemp\hbox to 0pt{\hss $\bu$\hss}}%
    \graphtemp=.5ex\advance\graphtemp by 0.382in
    \rlap{\kern 0.655in\lower\graphtemp\hbox to 0pt{\hss $\bu$\hss}}%
    \graphtemp=.5ex\advance\graphtemp by 0.382in
    \rlap{\kern 1.475in\lower\graphtemp\hbox to 0pt{\hss $\bu$\hss}}%
    \graphtemp=.5ex\advance\graphtemp by 0.929in
    \rlap{\kern 0.382in\lower\graphtemp\hbox to 0pt{\hss $\bu$\hss}}%
    \graphtemp=.5ex\advance\graphtemp by 0.929in
    \rlap{\kern 1.748in\lower\graphtemp\hbox to 0pt{\hss $\bu$\hss}}%
    \graphtemp=.5ex\advance\graphtemp by 0.382in
    \rlap{\kern 0.382in\lower\graphtemp\hbox to 0pt{\hss $\bu$\hss}}%
    \graphtemp=.5ex\advance\graphtemp by 0.382in
    \rlap{\kern 1.748in\lower\graphtemp\hbox to 0pt{\hss $\bu$\hss}}%
    \special{pn 11}%
    \special{pa 929 655}%
    \special{pa 1202 655}%
    \special{fp}%
    \special{pa 929 655}%
    \special{pa 655 929}%
    \special{fp}%
    \special{pa 929 655}%
    \special{pa 655 382}%
    \special{fp}%
    \special{pa 1202 655}%
    \special{pa 929 655}%
    \special{fp}%
    \special{pa 1202 655}%
    \special{pa 1475 929}%
    \special{fp}%
    \special{pa 1202 655}%
    \special{pa 1475 382}%
    \special{fp}%
    \special{pn 8}%
    \special{ar 1065 534 437 437 -0.480421 -0.355421}%
    \special{ar 1065 534 437 437 -0.736616 -0.611616}%
    \special{ar 1065 534 437 437 -0.992810 -0.867810}%
    \special{ar 1065 534 437 437 -1.249005 -1.124005}%
    \special{ar 1065 534 437 437 -1.505199 -1.380199}%
    \special{ar 1065 534 437 437 -1.761394 -1.636394}%
    \special{ar 1065 534 437 437 -2.017588 -1.892588}%
    \special{ar 1065 534 437 437 -2.273783 -2.148783}%
    \special{ar 1065 534 437 437 -2.529977 -2.404977}%
    \special{ar 1065 534 437 437 -2.786171 -2.661171}%
    \special{ar 1065 777 437 437 2.661171 2.786171}%
    \special{ar 1065 777 437 437 2.404977 2.529977}%
    \special{ar 1065 777 437 437 2.148783 2.273783}%
    \special{ar 1065 777 437 437 1.892588 2.017588}%
    \special{ar 1065 777 437 437 1.636394 1.761394}%
    \special{ar 1065 777 437 437 1.380199 1.505199}%
    \special{ar 1065 777 437 437 1.124005 1.249005}%
    \special{ar 1065 777 437 437 0.867810 0.992810}%
    \special{ar 1065 777 437 437 0.611616 0.736616}%
    \special{ar 1065 777 437 437 0.355421 0.480421}%
    \special{ar 470 655 287 287 1.880641 4.402544}%
    \special{ar 1660 655 287 287 -1.260952 1.260952}%
    \special{pn 11}%
    \special{pa 655 929}%
    \special{pa 382 929}%
    \special{fp}%
    \special{pa 1475 929}%
    \special{pa 1748 929}%
    \special{fp}%
    \special{pa 655 382}%
    \special{pa 382 382}%
    \special{fp}%
    \special{pa 1475 382}%
    \special{pa 1748 382}%
    \special{fp}%
    \graphtemp=.5ex\advance\graphtemp by 0.655in
    \rlap{\kern 0.819in\lower\graphtemp\hbox to 0pt{\hss $x$\hss}}%
    \graphtemp=.5ex\advance\graphtemp by 1.033in
    \rlap{\kern 0.578in\lower\graphtemp\hbox to 0pt{\hss $b$\hss}}%
    \graphtemp=.5ex\advance\graphtemp by 0.305in
    \rlap{\kern 0.578in\lower\graphtemp\hbox to 0pt{\hss $a$\hss}}%
    \graphtemp=.5ex\advance\graphtemp by 0.655in
    \rlap{\kern 1.311in\lower\graphtemp\hbox to 0pt{\hss $y$\hss}}%
    \graphtemp=.5ex\advance\graphtemp by 1.006in
    \rlap{\kern 1.552in\lower\graphtemp\hbox to 0pt{\hss $v$\hss}}%
    \graphtemp=.5ex\advance\graphtemp by 0.305in
    \rlap{\kern 1.552in\lower\graphtemp\hbox to 0pt{\hss $u$\hss}}%
    \graphtemp=.5ex\advance\graphtemp by 0.382in
    \rlap{\kern 1.065in\lower\graphtemp\hbox to 0pt{\hss $Q$\hss}}%
    \graphtemp=.5ex\advance\graphtemp by 0.929in
    \rlap{\kern 1.065in\lower\graphtemp\hbox to 0pt{\hss $R$\hss}}%
    \graphtemp=.5ex\advance\graphtemp by 0.655in
    \rlap{\kern 0.000in\lower\graphtemp\hbox to 0pt{\hss $G$\hss}}%
    \graphtemp=.5ex\advance\graphtemp by 1.033in
    \rlap{\kern 0.305in\lower\graphtemp\hbox to 0pt{\hss $~$\hss}}%
    \graphtemp=.5ex\advance\graphtemp by 0.305in
    \rlap{\kern 0.305in\lower\graphtemp\hbox to 0pt{\hss $w$\hss}}%
    \graphtemp=.5ex\advance\graphtemp by 1.006in
    \rlap{\kern 1.825in\lower\graphtemp\hbox to 0pt{\hss $~$\hss}}%
    \graphtemp=.5ex\advance\graphtemp by 0.655in
    \rlap{\kern 0.546in\lower\graphtemp\hbox to 0pt{\hss $C$\hss}}%
    \graphtemp=.5ex\advance\graphtemp by 0.655in
    \rlap{\kern 1.584in\lower\graphtemp\hbox to 0pt{\hss $D$\hss}}%
    \graphtemp=.5ex\advance\graphtemp by 0.305in
    \rlap{\kern 1.825in\lower\graphtemp\hbox to 0pt{\hss $~$\hss}}%
    \graphtemp=.5ex\advance\graphtemp by 0.164in
    \rlap{\kern 0.847in\lower\graphtemp\hbox to 0pt{\hss $\bu$\hss}}%
    \graphtemp=.5ex\advance\graphtemp by 0.087in
    \rlap{\kern 0.769in\lower\graphtemp\hbox to 0pt{\hss $z$\hss}}%
    \graphtemp=.5ex\advance\graphtemp by 0.109in
    \rlap{\kern 1.065in\lower\graphtemp\hbox to 0pt{\hss $\bu$\hss}}%
    \graphtemp=.5ex\advance\graphtemp by 0.032in
    \rlap{\kern 1.142in\lower\graphtemp\hbox to 0pt{\hss $t$\hss}}%
    \graphtemp=.5ex\advance\graphtemp by 0.655in
    \rlap{\kern 3.113in\lower\graphtemp\hbox to 0pt{\hss $\bu$\hss}}%
    \graphtemp=.5ex\advance\graphtemp by 0.655in
    \rlap{\kern 3.387in\lower\graphtemp\hbox to 0pt{\hss $\bu$\hss}}%
    \graphtemp=.5ex\advance\graphtemp by 0.929in
    \rlap{\kern 2.840in\lower\graphtemp\hbox to 0pt{\hss $\bu$\hss}}%
    \graphtemp=.5ex\advance\graphtemp by 0.929in
    \rlap{\kern 3.660in\lower\graphtemp\hbox to 0pt{\hss $\bu$\hss}}%
    \graphtemp=.5ex\advance\graphtemp by 0.382in
    \rlap{\kern 2.840in\lower\graphtemp\hbox to 0pt{\hss $\bu$\hss}}%
    \graphtemp=.5ex\advance\graphtemp by 0.382in
    \rlap{\kern 3.660in\lower\graphtemp\hbox to 0pt{\hss $\bu$\hss}}%
    \graphtemp=.5ex\advance\graphtemp by 0.929in
    \rlap{\kern 2.567in\lower\graphtemp\hbox to 0pt{\hss $\bu$\hss}}%
    \graphtemp=.5ex\advance\graphtemp by 0.929in
    \rlap{\kern 3.933in\lower\graphtemp\hbox to 0pt{\hss $\bu$\hss}}%
    \graphtemp=.5ex\advance\graphtemp by 0.382in
    \rlap{\kern 2.567in\lower\graphtemp\hbox to 0pt{\hss $\bu$\hss}}%
    \graphtemp=.5ex\advance\graphtemp by 0.382in
    \rlap{\kern 3.933in\lower\graphtemp\hbox to 0pt{\hss $\bu$\hss}}%
    \special{pa 3113 655}%
    \special{pa 3387 655}%
    \special{fp}%
    \special{pa 3113 655}%
    \special{pa 3660 382}%
    \special{fp}%
    \special{pa 3113 655}%
    \special{pa 2840 382}%
    \special{fp}%
    \special{pa 3387 655}%
    \special{pa 3113 655}%
    \special{fp}%
    \special{pa 3387 655}%
    \special{pa 2840 929}%
    \special{fp}%
    \special{pa 3387 655}%
    \special{pa 3660 929}%
    \special{fp}%
    \special{pn 8}%
    \special{ar 3250 534 437 437 -0.480421 -0.355421}%
    \special{ar 3250 534 437 437 -0.736616 -0.611616}%
    \special{ar 3250 534 437 437 -0.992810 -0.867810}%
    \special{ar 3250 534 437 437 -1.249005 -1.124005}%
    \special{ar 3250 534 437 437 -1.505199 -1.380199}%
    \special{ar 3250 534 437 437 -1.761394 -1.636394}%
    \special{ar 3250 534 437 437 -2.017588 -1.892588}%
    \special{ar 3250 534 437 437 -2.273783 -2.148783}%
    \special{ar 3250 534 437 437 -2.529977 -2.404977}%
    \special{ar 3250 534 437 437 -2.786171 -2.661171}%
    \special{ar 3250 777 437 437 2.661171 2.786171}%
    \special{ar 3250 777 437 437 2.404977 2.529977}%
    \special{ar 3250 777 437 437 2.148783 2.273783}%
    \special{ar 3250 777 437 437 1.892588 2.017588}%
    \special{ar 3250 777 437 437 1.636394 1.761394}%
    \special{ar 3250 777 437 437 1.380199 1.505199}%
    \special{ar 3250 777 437 437 1.124005 1.249005}%
    \special{ar 3250 777 437 437 0.867810 0.992810}%
    \special{ar 3250 777 437 437 0.611616 0.736616}%
    \special{ar 3250 777 437 437 0.355421 0.480421}%
    \special{ar 2655 655 287 287 1.880641 4.402544}%
    \special{ar 3845 655 287 287 -1.260952 1.260952}%
    \special{pn 11}%
    \special{pa 2840 929}%
    \special{pa 2567 929}%
    \special{fp}%
    \special{pa 3660 929}%
    \special{pa 3933 929}%
    \special{fp}%
    \special{pa 2840 382}%
    \special{pa 2567 382}%
    \special{fp}%
    \special{pa 3660 382}%
    \special{pa 3933 382}%
    \special{fp}%
    \graphtemp=.5ex\advance\graphtemp by 0.710in
    \rlap{\kern 3.004in\lower\graphtemp\hbox to 0pt{\hss $x'$\hss}}%
    \graphtemp=.5ex\advance\graphtemp by 1.033in
    \rlap{\kern 2.763in\lower\graphtemp\hbox to 0pt{\hss $b$\hss}}%
    \graphtemp=.5ex\advance\graphtemp by 0.305in
    \rlap{\kern 2.763in\lower\graphtemp\hbox to 0pt{\hss $a$\hss}}%
    \graphtemp=.5ex\advance\graphtemp by 0.655in
    \rlap{\kern 3.496in\lower\graphtemp\hbox to 0pt{\hss $y'$\hss}}%
    \graphtemp=.5ex\advance\graphtemp by 1.006in
    \rlap{\kern 3.737in\lower\graphtemp\hbox to 0pt{\hss $v$\hss}}%
    \graphtemp=.5ex\advance\graphtemp by 0.305in
    \rlap{\kern 3.737in\lower\graphtemp\hbox to 0pt{\hss $u$\hss}}%
    \graphtemp=.5ex\advance\graphtemp by 0.382in
    \rlap{\kern 3.250in\lower\graphtemp\hbox to 0pt{\hss $Q'$\hss}}%
    \graphtemp=.5ex\advance\graphtemp by 0.956in
    \rlap{\kern 3.250in\lower\graphtemp\hbox to 0pt{\hss $R'$\hss}}%
    \graphtemp=.5ex\advance\graphtemp by 0.655in
    \rlap{\kern 4.315in\lower\graphtemp\hbox to 0pt{\hss $H$\hss}}%
    \graphtemp=.5ex\advance\graphtemp by 1.033in
    \rlap{\kern 2.490in\lower\graphtemp\hbox to 0pt{\hss $~$\hss}}%
    \graphtemp=.5ex\advance\graphtemp by 0.305in
    \rlap{\kern 2.490in\lower\graphtemp\hbox to 0pt{\hss $w$\hss}}%
    \graphtemp=.5ex\advance\graphtemp by 1.006in
    \rlap{\kern 4.010in\lower\graphtemp\hbox to 0pt{\hss $~$\hss}}%
    \graphtemp=.5ex\advance\graphtemp by 0.655in
    \rlap{\kern 2.731in\lower\graphtemp\hbox to 0pt{\hss $~$\hss}}%
    \graphtemp=.5ex\advance\graphtemp by 0.655in
    \rlap{\kern 3.769in\lower\graphtemp\hbox to 0pt{\hss $~$\hss}}%
    \graphtemp=.5ex\advance\graphtemp by 0.305in
    \rlap{\kern 4.010in\lower\graphtemp\hbox to 0pt{\hss $~$\hss}}%
    \graphtemp=.5ex\advance\graphtemp by 0.164in
    \rlap{\kern 3.032in\lower\graphtemp\hbox to 0pt{\hss $\bu$\hss}}%
    \graphtemp=.5ex\advance\graphtemp by 0.087in
    \rlap{\kern 2.954in\lower\graphtemp\hbox to 0pt{\hss $z$\hss}}%
    \graphtemp=.5ex\advance\graphtemp by 0.109in
    \rlap{\kern 3.250in\lower\graphtemp\hbox to 0pt{\hss $\bu$\hss}}%
    \graphtemp=.5ex\advance\graphtemp by 0.032in
    \rlap{\kern 3.327in\lower\graphtemp\hbox to 0pt{\hss $t$\hss}}%
    \graphtemp=.5ex\advance\graphtemp by 0.655in
    \rlap{\kern 5.298in\lower\graphtemp\hbox to 0pt{\hss $\bu$\hss}}%
    \graphtemp=.5ex\advance\graphtemp by 0.655in
    \rlap{\kern 5.571in\lower\graphtemp\hbox to 0pt{\hss $\bu$\hss}}%
    \graphtemp=.5ex\advance\graphtemp by 0.929in
    \rlap{\kern 5.025in\lower\graphtemp\hbox to 0pt{\hss $\bu$\hss}}%
    \graphtemp=.5ex\advance\graphtemp by 0.929in
    \rlap{\kern 5.845in\lower\graphtemp\hbox to 0pt{\hss $\bu$\hss}}%
    \graphtemp=.5ex\advance\graphtemp by 0.382in
    \rlap{\kern 5.025in\lower\graphtemp\hbox to 0pt{\hss $\bu$\hss}}%
    \graphtemp=.5ex\advance\graphtemp by 0.382in
    \rlap{\kern 5.845in\lower\graphtemp\hbox to 0pt{\hss $\bu$\hss}}%
    \graphtemp=.5ex\advance\graphtemp by 0.929in
    \rlap{\kern 4.752in\lower\graphtemp\hbox to 0pt{\hss $\bu$\hss}}%
    \graphtemp=.5ex\advance\graphtemp by 0.929in
    \rlap{\kern 6.118in\lower\graphtemp\hbox to 0pt{\hss $\bu$\hss}}%
    \graphtemp=.5ex\advance\graphtemp by 0.382in
    \rlap{\kern 4.752in\lower\graphtemp\hbox to 0pt{\hss $\bu$\hss}}%
    \graphtemp=.5ex\advance\graphtemp by 0.382in
    \rlap{\kern 6.118in\lower\graphtemp\hbox to 0pt{\hss $\bu$\hss}}%
    \special{pa 5298 655}%
    \special{pa 5571 655}%
    \special{fp}%
    \special{pa 5298 655}%
    \special{pa 5025 929}%
    \special{fp}%
    \special{pa 5298 655}%
    \special{pa 5025 382}%
    \special{fp}%
    \special{pa 5571 655}%
    \special{pa 5298 655}%
    \special{fp}%
    \special{pa 5571 655}%
    \special{pa 5845 929}%
    \special{fp}%
    \special{pa 5571 655}%
    \special{pa 5845 382}%
    \special{fp}%
    \special{pn 8}%
    \special{ar 5435 534 437 437 -2.786171 -0.355421}%
    \special{ar 5435 777 437 437 0.355421 2.786171}%
    \special{ar 4840 655 287 287 1.880641 4.402544}%
    \special{ar 6030 655 287 287 -1.260952 1.260952}%
    \graphtemp=.5ex\advance\graphtemp by 0.655in
    \rlap{\kern 5.189in\lower\graphtemp\hbox to 0pt{\hss $x$\hss}}%
    \graphtemp=.5ex\advance\graphtemp by 1.033in
    \rlap{\kern 4.948in\lower\graphtemp\hbox to 0pt{\hss $b$\hss}}%
    \graphtemp=.5ex\advance\graphtemp by 0.382in
    \rlap{\kern 5.162in\lower\graphtemp\hbox to 0pt{\hss $a'$\hss}}%
    \graphtemp=.5ex\advance\graphtemp by 0.655in
    \rlap{\kern 5.681in\lower\graphtemp\hbox to 0pt{\hss $y$\hss}}%
    \graphtemp=.5ex\advance\graphtemp by 1.006in
    \rlap{\kern 5.922in\lower\graphtemp\hbox to 0pt{\hss $v$\hss}}%
    \graphtemp=.5ex\advance\graphtemp by 0.305in
    \rlap{\kern 5.922in\lower\graphtemp\hbox to 0pt{\hss $u$\hss}}%
    \graphtemp=.5ex\advance\graphtemp by 0.382in
    \rlap{\kern 5.435in\lower\graphtemp\hbox to 0pt{\hss $Q''$\hss}}%
    \graphtemp=.5ex\advance\graphtemp by 0.929in
    \rlap{\kern 5.435in\lower\graphtemp\hbox to 0pt{\hss $~$\hss}}%
    \graphtemp=.5ex\advance\graphtemp by 0.655in
    \rlap{\kern 6.500in\lower\graphtemp\hbox to 0pt{\hss $H'$\hss}}%
    \graphtemp=.5ex\advance\graphtemp by 1.033in
    \rlap{\kern 4.675in\lower\graphtemp\hbox to 0pt{\hss $~$\hss}}%
    \graphtemp=.5ex\advance\graphtemp by 0.305in
    \rlap{\kern 4.675in\lower\graphtemp\hbox to 0pt{\hss $w$\hss}}%
    \graphtemp=.5ex\advance\graphtemp by 1.006in
    \rlap{\kern 6.195in\lower\graphtemp\hbox to 0pt{\hss $~$\hss}}%
    \graphtemp=.5ex\advance\graphtemp by 0.655in
    \rlap{\kern 4.916in\lower\graphtemp\hbox to 0pt{\hss $C'$\hss}}%
    \graphtemp=.5ex\advance\graphtemp by 0.655in
    \rlap{\kern 5.954in\lower\graphtemp\hbox to 0pt{\hss $R$\hss}}%
    \graphtemp=.5ex\advance\graphtemp by 0.305in
    \rlap{\kern 6.195in\lower\graphtemp\hbox to 0pt{\hss $~$\hss}}%
    \graphtemp=.5ex\advance\graphtemp by 0.164in
    \rlap{\kern 5.216in\lower\graphtemp\hbox to 0pt{\hss $\bu$\hss}}%
    \graphtemp=.5ex\advance\graphtemp by 0.087in
    \rlap{\kern 5.139in\lower\graphtemp\hbox to 0pt{\hss $z'$\hss}}%
    \special{pn 11}%
    \special{pa 5025 929}%
    \special{pa 4752 929}%
    \special{fp}%
    \special{pa 5845 929}%
    \special{pa 6118 929}%
    \special{fp}%
    \special{pa 5845 382}%
    \special{pa 6118 382}%
    \special{fp}%
    \special{pa 4752 382}%
    \special{pa 5216 164}%
    \special{fp}%
    \graphtemp=.5ex\advance\graphtemp by 0.109in
    \rlap{\kern 5.435in\lower\graphtemp\hbox to 0pt{\hss $\bu$\hss}}%
    \graphtemp=.5ex\advance\graphtemp by 0.032in
    \rlap{\kern 5.512in\lower\graphtemp\hbox to 0pt{\hss $t$\hss}}%
    \hbox{\vrule depth1.213in width0pt height 0pt}%
    \kern 6.500in
  }%
}%
}
\caption{Adjacent vertices in $g$-cycles.}\label{distance1}
\end{figure}

\begin{lemma}\label{step6}
No vertex lies in a $g$-cycle and two $g'$-cycles.
\end{lemma}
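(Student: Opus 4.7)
The plan is to assume for contradiction that some vertex $y$ lies in a $g$-cycle $C$ and two $g'$-cycles $D_1,D_2$, then derive a contradiction via alternative reconstructions. By Lemma~\ref{step4}, no two cycles of length at most $g'$ are spliced, so the three cycles $C,D_1,D_2$ through $y$ use three distinct pairs of edges at $y$. Writing $N(y)=\{x,u,v\}$, after relabeling we may assume $C$ contains $\la a,x,y,u,d\ra$, $D_1$ contains $\la b,x,y,v,f\ra$, and $D_2$ contains $\la c,u,y,v,e\ra$, where $a,b$ are the neighbors of $x$ other than $y$, $c,d$ of $u$, and $e,f$ of $v$. Note that by Lemma~\ref{step3} the cycle $C$ is the unique $g$-cycle through $y$, and by Lemma~\ref{step5} the vertex $v$ lies in no $g$-cycle.

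Since $xy\in E(C)$, Lemma~\ref{shortest} and Remark~\ref{alt} give a unique alternative reconstruction $H$ of $G-\{x,y\}$ with $N_H(x')=\{y',a,v\}$ and $N_H(y')=\{x',u,b\}$. In $H$, the cycle $C$ becomes the $g$-cycle $C'$ (substituting $x',y'$ for $x,y$), and $D_1$ becomes a $g'$-cycle $D_1'=\la b,y',x',v,f,\ldots,b\ra$. However, $D_2$ does not yield a $g'$-cycle in $H$: the $u,v$-path of length $g-1$ obtained by deleting $y$ from $D_2$, together with $\la u,y',x',v\ra$, closes only into a cycle of length $g+2$.

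The key step uses Corollary~\ref{girth} to equate the count of $g'$-cycles through $\{x,y\}$ in $G$ with the count through $\{x',y'\}$ in $H$. In $G$ these are $D_1$, $D_2$, and possibly a $g'$-cycle $D_3$ through $x$ using $\{ax,bx\}$ (unique if it exists, by Lemma~\ref{step4}). In $H$ they are $D_1'$, possibly a $g'$-cycle $D_2'$ through $y'$ using $\{uy',by'\}$, and possibly a $g'$-cycle $D_2''$ through $x'$ using $\{ax',vx'\}$. The resulting equality $[D_2']+[D_2'']=1+[D_3]$ splits the analysis into cases. Whenever $D_2'$ or $D_2''$ exists, deleting $y'$ (resp.\ $x'$) extracts a $u,b$-path (resp.\ $a,v$-path) of length $g-1$. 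A short girth check rules out the path passing through the new vertex: a detour through $x'$ would decompose the $u,b$-path into a $u,a$-segment of length $\ell_1$ and a $v,b$-segment of length $\ell_2=g-3-\ell_1$, and then one of $\la u,y,x,a\ra$ or $\la v,y,x,b\ra$ would close with the corresponding segment into a cycle of length less than $g$. Hence the extracted path lies entirely within $G$.

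I then repeat this analysis on the card $G-\{u,y\}$ (also uniquely reconstructed by Lemma~\ref{shortest} since $uy\in E(C)$, with $N(u')=\{y',d,v\}$ and $N(y')=\{u',x,c\}$) to produce a complementary family of length-$(g-1)$ paths in $G$. Combining these paths with $C$, $D_1$, $D_2$, and (where applicable) $D_3$, each branch of the case analysis produces, either in $G$ or in one of the derived alternative reconstructions, two $g$-cycles sharing a vertex (contradicting Lemma~\ref{step3}), two spliced cycles of length at most $g'$ (contradicting Lemma~\ref{step4}), or an edge joining two distinct $g$-cycles (contradicting Lemma~\ref{step5}). The main obstacle will be executing the case analysis cleanly, especially the case $[D_3]=1$, in which $x$ itself lies in the same bad configuration as $y$ and both cards $G-\{x,y\}$ and $G-\{u,y\}$ must be chased together to extract a concrete violation of one of the earlier lemmas.
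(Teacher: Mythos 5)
Your setup is sound: you correctly identify that the three cycles through the vertex must use the three distinct pairs of incident edges (so the two $g'$-cycles share one edge at that vertex, and each shares a different edge with the $g$-cycle), and your counting identity $[D_2']+[D_2'']=1+[D_3]$, derived from Corollary~\ref{girth} applied to $g'$-cycles meeting the deleted pair, is a legitimate tool of the same kind the paper uses in Lemma~\ref{shortest}. But the proof is not there. Everything after the identity is a promissory note: you assert that "each branch of the case analysis produces" a violation of Lemma~\ref{step3}, \ref{step4}, or \ref{step5}, and you yourself flag that "the main obstacle will be executing the case analysis cleanly, especially the case $[D_3]=1$." That case analysis \emph{is} the proof; with three Boolean unknowns constrained by one linear relation, a second card $G-\{u,y\}$ contributing its own unknowns, and several extracted $(g-1)$-paths whose interactions with $C$, $D_1$, $D_2$, $D_3$ must each be checked against girth and splicing constraints, it is far from evident that every branch closes. (A smaller unaddressed point: your enumeration of $g'$-cycles meeting $\{x,y\}$ in $G$ tacitly assumes $D_2$ avoids $x$; this is true, but only because $xy$ would otherwise be a chord of $D_2$ creating a cycle shorter than $g$, and it needs saying.)

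The gap traces back to the choice of card. You delete an edge of the $g$-cycle, which buys uniqueness of $H$ via Lemma~\ref{shortest} but destroys only one of the two $g'$-cycles cleanly and forces the counting detour. The paper instead deletes the two endpoints of the edge shared by the two $g'$-cycles (in your labels, $\{y,v\}$). Uniqueness of the alternative reconstruction is not needed there: after discarding the pairing that recreates $G$, only two pairings remain. In one, both $g'$-cycles contract into $g$-cycles joined by the new edge, contradicting Lemma~\ref{step5}; in the other, the $g$-cycle stretches into a $g'$-cycle spliced with the re-routed $g'$-cycles, contradicting Lemma~\ref{step4}. No counting, no second card, no residual cases. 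I would either adopt that deletion or actually write out and verify every branch of your analysis before claiming the lemma.
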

\begin{proof}
Suppose that $x$ is such a vertex in the $3$-regular graph $G$.  Each of the
three cycles uses two edges incident to $x$.  Since there are no spliced cycles
of length at most $g'$, each remaining edge incident to the neighbors of $x$
lies in exactly one of these cycles.  Let $N_G(x)=\{y,a,b\}$ and
$N_G(y)=\{x,u,v\}$, with $xy$ shared by the $g'$-cycles $Q$ and $R$, and with
$a,u\in V(Q)$ and $b,v\in V(R)$ (see Figure~\ref{threecyc}).
The $g$-cycle $C$ through $x$ contains $\la a,x,b\ra$.

Let $H$ be an alternative reconstruction from $G-\{x,y\}$, with missing
vertices $x'$ and $y'$.  We have $x'y'\in E(H)$, and by 
symmetry we may assume $ax'\in E(H)$ and hence also $by'\in E(H)$.
Now there are two cases, shown in Figure~\ref{threecyc}.  
If $ux',vy'\in E(H)$, then replacing $\la a,x,y,u\ra$ in $Q$ with
$\la a,x',u\ra$ and replacing $\la b,x,y,v\ra$ in $R$ with $\la b,y',v\ra$
yields $g$-cycles $Q'$ and $R'$ in $H$ through the endpoints of the edge
$x'y'$, which is forbidden by Lemma~\ref{step5}.  If $uy',vx'\in E(H)$, then
replacing $\la a,x,b\ra$ in $C$ with $\la a,x',y',b\ra$ yields a $g'$-cycle
$C'$ in $H$ that is spliced with the $g'$-cycles $Q''$ through
$\la a,x',y',u\ra$ and $R''$ through $\la b,y',x',v\ra$, which is forbidden by
Lemma~\ref{step4}.
\end{proof}

\vspace{-1pc}
\begin{figure}[hbt]
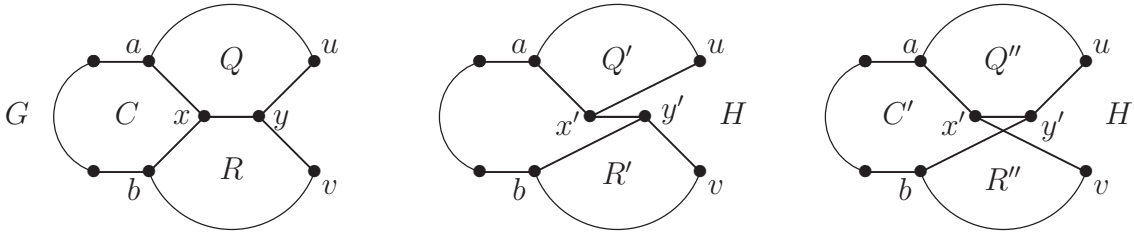

\gpic{
\expandafter\ifx\csname graph\endcsname\relax \csname newbox\endcsname\graph\fi
\expandafter\ifx\csname graphtemp\endcsname\relax \csname newdimen\endcsname\graphtemp\fi
\setbox\graph=\vtop{\vskip 0pt\hbox{%
    \graphtemp=.5ex\advance\graphtemp by 0.589in
    \rlap{\kern 0.981in\lower\graphtemp\hbox to 0pt{\hss $\bu$\hss}}%
    \graphtemp=.5ex\advance\graphtemp by 0.589in
    \rlap{\kern 1.269in\lower\graphtemp\hbox to 0pt{\hss $\bu$\hss}}%
    \graphtemp=.5ex\advance\graphtemp by 0.878in
    \rlap{\kern 0.692in\lower\graphtemp\hbox to 0pt{\hss $\bu$\hss}}%
    \graphtemp=.5ex\advance\graphtemp by 0.878in
    \rlap{\kern 1.558in\lower\graphtemp\hbox to 0pt{\hss $\bu$\hss}}%
    \graphtemp=.5ex\advance\graphtemp by 0.301in
    \rlap{\kern 0.692in\lower\graphtemp\hbox to 0pt{\hss $\bu$\hss}}%
    \graphtemp=.5ex\advance\graphtemp by 0.301in
    \rlap{\kern 1.558in\lower\graphtemp\hbox to 0pt{\hss $\bu$\hss}}%
    \graphtemp=.5ex\advance\graphtemp by 0.878in
    \rlap{\kern 0.404in\lower\graphtemp\hbox to 0pt{\hss $\bu$\hss}}%
    \graphtemp=.5ex\advance\graphtemp by 0.301in
    \rlap{\kern 0.404in\lower\graphtemp\hbox to 0pt{\hss $\bu$\hss}}%
    \graphtemp=.5ex\advance\graphtemp by 0.301in
    \rlap{\kern 0.404in\lower\graphtemp\hbox to 0pt{\hss $\bu$\hss}}%
    \special{pn 11}%
    \special{pa 981 589}%
    \special{pa 1269 589}%
    \special{fp}%
    \special{pa 981 589}%
    \special{pa 692 878}%
    \special{fp}%
    \special{pa 981 589}%
    \special{pa 692 301}%
    \special{fp}%
    \special{pa 1269 589}%
    \special{pa 981 589}%
    \special{fp}%
    \special{pa 1269 589}%
    \special{pa 1558 878}%
    \special{fp}%
    \special{pa 1269 589}%
    \special{pa 1558 301}%
    \special{fp}%
    \special{pn 8}%
    \special{ar 1125 462 462 462 -2.786171 -0.355421}%
    \special{ar 1125 717 462 462 0.355421 2.786171}%
    \special{ar 496 589 303 303 1.880641 4.402544}%
    \special{pn 11}%
    \special{pa 692 878}%
    \special{pa 404 878}%
    \special{fp}%
    \special{pa 692 301}%
    \special{pa 404 301}%
    \special{fp}%
    \graphtemp=.5ex\advance\graphtemp by 0.589in
    \rlap{\kern 0.865in\lower\graphtemp\hbox to 0pt{\hss $x$\hss}}%
    \graphtemp=.5ex\advance\graphtemp by 0.988in
    \rlap{\kern 0.611in\lower\graphtemp\hbox to 0pt{\hss $b$\hss}}%
    \graphtemp=.5ex\advance\graphtemp by 0.219in
    \rlap{\kern 0.611in\lower\graphtemp\hbox to 0pt{\hss $a$\hss}}%
    \graphtemp=.5ex\advance\graphtemp by 0.589in
    \rlap{\kern 1.385in\lower\graphtemp\hbox to 0pt{\hss $y$\hss}}%
    \graphtemp=.5ex\advance\graphtemp by 0.959in
    \rlap{\kern 1.639in\lower\graphtemp\hbox to 0pt{\hss $v$\hss}}%
    \graphtemp=.5ex\advance\graphtemp by 0.219in
    \rlap{\kern 1.639in\lower\graphtemp\hbox to 0pt{\hss $u$\hss}}%
    \graphtemp=.5ex\advance\graphtemp by 0.301in
    \rlap{\kern 1.125in\lower\graphtemp\hbox to 0pt{\hss $Q$\hss}}%
    \graphtemp=.5ex\advance\graphtemp by 0.878in
    \rlap{\kern 1.125in\lower\graphtemp\hbox to 0pt{\hss $R$\hss}}%
    \graphtemp=.5ex\advance\graphtemp by 0.589in
    \rlap{\kern 0.000in\lower\graphtemp\hbox to 0pt{\hss $G$\hss}}%
    \graphtemp=.5ex\advance\graphtemp by 0.589in
    \rlap{\kern 0.577in\lower\graphtemp\hbox to 0pt{\hss $C$\hss}}%
    \graphtemp=.5ex\advance\graphtemp by 0.589in
    \rlap{\kern 3.000in\lower\graphtemp\hbox to 0pt{\hss $\bu$\hss}}%
    \graphtemp=.5ex\advance\graphtemp by 0.589in
    \rlap{\kern 3.288in\lower\graphtemp\hbox to 0pt{\hss $\bu$\hss}}%
    \graphtemp=.5ex\advance\graphtemp by 0.878in
    \rlap{\kern 2.712in\lower\graphtemp\hbox to 0pt{\hss $\bu$\hss}}%
    \graphtemp=.5ex\advance\graphtemp by 0.878in
    \rlap{\kern 3.577in\lower\graphtemp\hbox to 0pt{\hss $\bu$\hss}}%
    \graphtemp=.5ex\advance\graphtemp by 0.301in
    \rlap{\kern 2.712in\lower\graphtemp\hbox to 0pt{\hss $\bu$\hss}}%
    \graphtemp=.5ex\advance\graphtemp by 0.301in
    \rlap{\kern 3.577in\lower\graphtemp\hbox to 0pt{\hss $\bu$\hss}}%
    \graphtemp=.5ex\advance\graphtemp by 0.878in
    \rlap{\kern 2.423in\lower\graphtemp\hbox to 0pt{\hss $\bu$\hss}}%
    \graphtemp=.5ex\advance\graphtemp by 0.301in
    \rlap{\kern 2.423in\lower\graphtemp\hbox to 0pt{\hss $\bu$\hss}}%
    \graphtemp=.5ex\advance\graphtemp by 0.301in
    \rlap{\kern 2.423in\lower\graphtemp\hbox to 0pt{\hss $\bu$\hss}}%
    \special{pa 3000 589}%
    \special{pa 3288 589}%
    \special{fp}%
    \special{pa 3000 589}%
    \special{pa 3577 301}%
    \special{fp}%
    \special{pa 3000 589}%
    \special{pa 2712 301}%
    \special{fp}%
    \special{pa 3288 589}%
    \special{pa 3000 589}%
    \special{fp}%
    \special{pa 3288 589}%
    \special{pa 2712 878}%
    \special{fp}%
    \special{pa 3288 589}%
    \special{pa 3577 878}%
    \special{fp}%
    \special{pn 8}%
    \special{ar 3144 462 462 462 -2.786171 -0.355421}%
    \special{ar 3144 717 462 462 0.355421 2.786171}%
    \special{ar 2515 589 303 303 1.880641 4.402544}%
    \special{pn 11}%
    \special{pa 2712 878}%
    \special{pa 2423 878}%
    \special{fp}%
    \special{pa 2712 301}%
    \special{pa 2423 301}%
    \special{fp}%
    \graphtemp=.5ex\advance\graphtemp by 0.647in
    \rlap{\kern 2.885in\lower\graphtemp\hbox to 0pt{\hss $x'$\hss}}%
    \graphtemp=.5ex\advance\graphtemp by 0.988in
    \rlap{\kern 2.630in\lower\graphtemp\hbox to 0pt{\hss $b$\hss}}%
    \graphtemp=.5ex\advance\graphtemp by 0.219in
    \rlap{\kern 2.630in\lower\graphtemp\hbox to 0pt{\hss $a$\hss}}%
    \graphtemp=.5ex\advance\graphtemp by 0.561in
    \rlap{\kern 3.433in\lower\graphtemp\hbox to 0pt{\hss $y'$\hss}}%
    \graphtemp=.5ex\advance\graphtemp by 0.959in
    \rlap{\kern 3.659in\lower\graphtemp\hbox to 0pt{\hss $v$\hss}}%
    \graphtemp=.5ex\advance\graphtemp by 0.219in
    \rlap{\kern 3.659in\lower\graphtemp\hbox to 0pt{\hss $u$\hss}}%
    \graphtemp=.5ex\advance\graphtemp by 0.301in
    \rlap{\kern 3.144in\lower\graphtemp\hbox to 0pt{\hss $Q'$\hss}}%
    \graphtemp=.5ex\advance\graphtemp by 0.907in
    \rlap{\kern 3.144in\lower\graphtemp\hbox to 0pt{\hss $R'$\hss}}%
    \graphtemp=.5ex\advance\graphtemp by 0.589in
    \rlap{\kern 3.750in\lower\graphtemp\hbox to 0pt{\hss $H$\hss}}%
    \graphtemp=.5ex\advance\graphtemp by 0.589in
    \rlap{\kern 5.019in\lower\graphtemp\hbox to 0pt{\hss $\bu$\hss}}%
    \graphtemp=.5ex\advance\graphtemp by 0.589in
    \rlap{\kern 5.308in\lower\graphtemp\hbox to 0pt{\hss $\bu$\hss}}%
    \graphtemp=.5ex\advance\graphtemp by 0.878in
    \rlap{\kern 4.731in\lower\graphtemp\hbox to 0pt{\hss $\bu$\hss}}%
    \graphtemp=.5ex\advance\graphtemp by 0.878in
    \rlap{\kern 5.596in\lower\graphtemp\hbox to 0pt{\hss $\bu$\hss}}%
    \graphtemp=.5ex\advance\graphtemp by 0.301in
    \rlap{\kern 4.731in\lower\graphtemp\hbox to 0pt{\hss $\bu$\hss}}%
    \graphtemp=.5ex\advance\graphtemp by 0.301in
    \rlap{\kern 5.596in\lower\graphtemp\hbox to 0pt{\hss $\bu$\hss}}%
    \graphtemp=.5ex\advance\graphtemp by 0.878in
    \rlap{\kern 4.442in\lower\graphtemp\hbox to 0pt{\hss $\bu$\hss}}%
    \graphtemp=.5ex\advance\graphtemp by 0.301in
    \rlap{\kern 4.442in\lower\graphtemp\hbox to 0pt{\hss $\bu$\hss}}%
    \graphtemp=.5ex\advance\graphtemp by 0.301in
    \rlap{\kern 4.442in\lower\graphtemp\hbox to 0pt{\hss $\bu$\hss}}%
    \special{pa 5019 589}%
    \special{pa 5308 589}%
    \special{fp}%
    \special{pa 5019 589}%
    \special{pa 5596 878}%
    \special{fp}%
    \special{pa 5019 589}%
    \special{pa 4731 301}%
    \special{fp}%
    \special{pa 5308 589}%
    \special{pa 5019 589}%
    \special{fp}%
    \special{pa 5308 589}%
    \special{pa 4731 878}%
    \special{fp}%
    \special{pa 5308 589}%
    \special{pa 5596 301}%
    \special{fp}%
    \special{pn 8}%
    \special{ar 5163 462 462 462 -2.786171 -0.355421}%
    \special{ar 5163 717 462 462 0.355421 2.786171}%
    \special{ar 4535 589 303 303 1.880641 4.402544}%
    \special{pn 11}%
    \special{pa 4731 878}%
    \special{pa 4442 878}%
    \special{fp}%
    \special{pa 4731 301}%
    \special{pa 4442 301}%
    \special{fp}%
    \graphtemp=.5ex\advance\graphtemp by 0.647in
    \rlap{\kern 4.904in\lower\graphtemp\hbox to 0pt{\hss $x'$\hss}}%
    \graphtemp=.5ex\advance\graphtemp by 0.988in
    \rlap{\kern 4.649in\lower\graphtemp\hbox to 0pt{\hss $b$\hss}}%
    \graphtemp=.5ex\advance\graphtemp by 0.219in
    \rlap{\kern 4.678in\lower\graphtemp\hbox to 0pt{\hss $a$\hss}}%
    \graphtemp=.5ex\advance\graphtemp by 0.647in
    \rlap{\kern 5.423in\lower\graphtemp\hbox to 0pt{\hss $y'$\hss}}%
    \graphtemp=.5ex\advance\graphtemp by 0.959in
    \rlap{\kern 5.678in\lower\graphtemp\hbox to 0pt{\hss $v$\hss}}%
    \graphtemp=.5ex\advance\graphtemp by 0.219in
    \rlap{\kern 5.678in\lower\graphtemp\hbox to 0pt{\hss $u$\hss}}%
    \graphtemp=.5ex\advance\graphtemp by 0.301in
    \rlap{\kern 5.163in\lower\graphtemp\hbox to 0pt{\hss $Q''$\hss}}%
    \graphtemp=.5ex\advance\graphtemp by 0.936in
    \rlap{\kern 5.163in\lower\graphtemp\hbox to 0pt{\hss $R''$\hss}}%
    \graphtemp=.5ex\advance\graphtemp by 0.589in
    \rlap{\kern 5.769in\lower\graphtemp\hbox to 0pt{\hss $H$\hss}}%
    \graphtemp=.5ex\advance\graphtemp by 0.589in
    \rlap{\kern 4.615in\lower\graphtemp\hbox to 0pt{\hss $C'$\hss}}%
    \hbox{\vrule depth1.179in width0pt height 0pt}%
    \kern 6.000in
  }%
}%
}
\caption{Three short cycles at a vertex.}\label{threecyc}
\end{figure}

\begin{lemma}\label{step7}
If $\la w,x,y,z\ra$ is a path in a $g$-cycle, and $wx$ lies in a $g'$-cycle,
then $yz$ also lies in a $g'$-cycle.
\end{lemma}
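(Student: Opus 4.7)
The plan is to prove the contrapositive: assuming $yz$ is on no $g'$-cycle in $G$, show that $wx$ is on no $g'$-cycle either. Let $p$, $q$, $s$ denote the third (off-$C$) neighbors of $x$, $y$, $z$, and let $w_1$ denote the third neighbor of $w$. By Lemma~\ref{step4}, any $g'$-cycle $Q$ through $wx$ must contain the path $\la w_1, w, x, p\ra$; otherwise $Q$ would share two consecutive edges with $C$. If $y\in V(Q)$, a short case analysis on the two edges of $Q$ at $y$ (ruling out $\{yx,yz\}$ via Lemma~\ref{step4} and $\{yx,yq\}$ via the already-determined structure of $Q$ at $x$) forces $Q$ to contain $\la q,y,z,s\ra$, so $yz\in E(Q)$ --- contradiction. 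Hence $y\notin V(Q)$; applying the same edge-analysis at $y$ to any hypothetical $g'$-cycle through $y$ shows that $y$ lies on no $g'$-cycle at all.

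Next, consider the card $G-\{x,y\}$ with its unique alternative reconstruction $H$ from Remark~\ref{alt}, where $N_H(x')=\{y',w,q\}$ and $N_H(y')=\{x',z,p\}$. By Lemma~\ref{subcount}, the $g'$-cycle count is preserved, so since $G$ has exactly one $g'$-cycle through $\{x,y\}$ (namely $Q$), $H$ has exactly one $g'$-cycle through $\{x',y'\}$. I would then perform a case analysis on which edges of $x'$ and $y'$ this cycle uses. Four of the six subcases are excluded by previously proven lemmas: a cycle through $\la w,x',y',z\ra$ would share three consecutive edges with $C'$ (Lemma~\ref{step0}); one through $\la w,x',y',p\ra$ would force a second $g$-cycle through $x$ in $G$ (Lemma~\ref{step3}); one through $\la q,x',y',z\ra$ symmetrically forces a second $g$-cycle through $y$; and one through $\la q,x',y',p\ra$ produces a second $g'$-cycle through $x$ (Lemma~\ref{step6}).

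The two residual cases are (a) a $g'$-cycle through $\la w,x',q\ra$ (using only $x'$), or (b) through $\la z,y',p\ra$ (using only $y'$). To close them I would invoke the auxiliary cards $G-\{w,x\}$ and $G-\{y,z\}$; by Lemma~\ref{shortest} each has a unique alternative reconstruction, and in each the $g'$-cycle $Q$ transfers to a corresponding $g'$-cycle through the new edge replacing $wx$. The key correspondence is that any $g'$-cycle through $yz$ in $G$ must avoid both $w$ and $x$ (since each of those vertices has $Q$ as its only $g'$-cycle by Lemma~\ref{step6}, and $Q$ does not contain $yz$), so $g'$-cycles through $yz$ in $G$ correspond bijectively to $g'$-cycles through $yz$ in the alternative reconstruction from $G-\{w,x\}$; a symmetric correspondence holds for $G-\{y,z\}$. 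Tracking these correspondences, the forced $g'$-cycles of cases~(a) and~(b) should produce configurations that violate Lemma~\ref{step3}, \ref{step5}, or~\ref{step6} in the auxiliary reconstruction. The main obstacle is this final step: it requires simultaneously tracking $g'$-cycle structure across several alternative reconstructions and pinpointing the precise forbidden configuration each residual case forces.
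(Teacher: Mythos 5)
Your route is genuinely different from the paper's (contrapositive, deleting the pair $\{x,y\}$ and counting $g'$-cycles through the missing pair, rather than the paper's direct argument deleting $y$ together with its off-cycle neighbor and counting $g$-cycles), but as written it is incomplete precisely where the real work lies. The two ``residual'' cases --- the unique $g'$-cycle of $H$ meeting $\{x',y'\}$ passing through $\la w,x',q\ra$ only, or through $\la z,y',p\ra$ only --- are never closed: ``should produce configurations that violate Lemma~\ref{step3}, \ref{step5}, or \ref{step6}'' is a hope, not an argument, and the one concrete correspondence you offer is vacuous under your standing assumption (you have assumed there are \emph{no} $g'$-cycles through $yz$ in $G$, so a bijection between such cycles in $G$ and in an auxiliary reconstruction carries no information by itself). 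Until those two cases are actually refuted, the lemma is not proved; everything before them is the easy part of the case analysis.

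Two further, smaller gaps. First, your enumeration of how the unique $g'$-cycle of $H$ can meet $\{x',y'\}$ omits the case in which it contains both $x'$ and $y'$ but not the edge $x'y'$, i.e.\ uses all four edges $x'w,x'q,y'z,y'p$. That case is refutable by a girth count (its two arcs in the card have total length $g-3$, but the arc joining $q$ to $p$, resp.\ to $z$, together with $\la q,y,x,p\ra$, resp.\ $\la q,y,z\ra$, in $G$ forces that arc alone to have length at least $g-3$, resp.\ $g-2$, leaving the other arc length $\le 0$ while $w\ne z$), but it must be addressed. Second, ``$y$ lies on no $g'$-cycle at all'' does not follow from the edge analysis alone: a $g'$-cycle through $\la p,x,y,q\ra$ is not spliced with $C$ and avoids $yz$; you need Lemma~\ref{step6} together with the hypothesized cycle $Q$ through $wx$ to exclude it. For contrast, the paper avoids residual cases entirely: it deletes $y$ and its neighbor $a$ outside $C$, notes that $C$ becomes a $g'$-cycle through $\la x,a',y',z\ra$ in the alternative reconstruction, and the count of $g$-cycles meeting the deleted pair forces a $g$-cycle through $\la x,a',b\ra$ (putting $x$ on a $g$-cycle and two $g'$-cycles, contradicting Lemma~\ref{step6}) or through $\la z,y',c\ra$ (which transfers back to $G$ as the desired $g'$-cycle through $yz$). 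If you want to keep your approach, the decisive idea for your cases (a) and (b) still has to be supplied.
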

\begin{proof}
Let $C$ be the $g$-cycle in $G$ containing $\la w,x,y,z\ra$, and let
$D$ be the $g'$-cycle containing $wx$.
Let $a$ be the neighbor of $y$ outside $C$, and let $N_G(a)=\{y,b,c\}$.
Let $H$ be an alternative reconstruction from $G-\{y,a\}$, with $y'$ and $a'$
being the missing vertices.  We have $y'a'\in E(H)$, and we may label $y'$ and
$a'$ so that $y'z,a'x\in E(H)$.  Also, label $b$ and $c$ so that
$a'b,y'c\in E(H)$ (see Figure~\ref{ggcyc}).  Note that replacing $\la x,y,z\ra$
in $C$ with $\la x,a',y',z\ra$ yields a $g'$-cycle $C'$ in $H$.

Since $C$ is a $g$-cycle in $G$ containing exactly one of $\{y,a\}$, in $H$
there must be a $g$-cycle $D'$ containing exactly one of $\{y',a'\}$.
It must contain $\la x,a',b\ra$ or $\la z,y',c\ra$.  In the first case,
$x$ in $H$ lies in the $g$-cycle $D'$ and $g'$-cycles $C'$ and $D$, forbidden
by Lemma~\ref{step6}.  In the second case, replacing $\la z,y',c\ra$ in $D'$
with $\la z,y,a,c\ra$ yields the desired $g'$-cycle in $G$ through $yz$.
\end{proof}

\vspace{-1pc}
\begin{figure}[hbt]
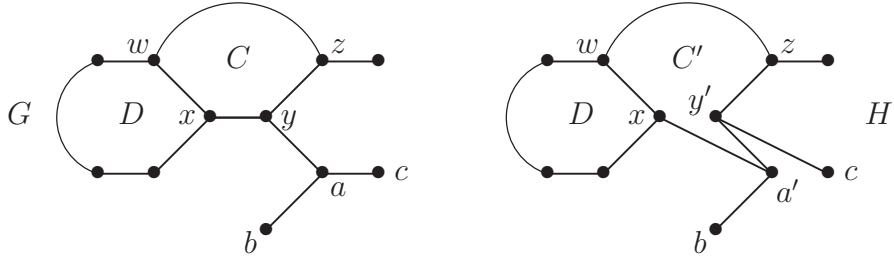

\gpic{
\expandafter\ifx\csname graph\endcsname\relax \csname newbox\endcsname\graph\fi
\expandafter\ifx\csname graphtemp\endcsname\relax \csname newdimen\endcsname\graphtemp\fi
\setbox\graph=\vtop{\vskip 0pt\hbox{%
    \graphtemp=.5ex\advance\graphtemp by 0.601in
    \rlap{\kern 1.000in\lower\graphtemp\hbox to 0pt{\hss $\bu$\hss}}%
    \graphtemp=.5ex\advance\graphtemp by 0.601in
    \rlap{\kern 1.294in\lower\graphtemp\hbox to 0pt{\hss $\bu$\hss}}%
    \graphtemp=.5ex\advance\graphtemp by 0.895in
    \rlap{\kern 0.706in\lower\graphtemp\hbox to 0pt{\hss $\bu$\hss}}%
    \graphtemp=.5ex\advance\graphtemp by 0.895in
    \rlap{\kern 1.588in\lower\graphtemp\hbox to 0pt{\hss $\bu$\hss}}%
    \graphtemp=.5ex\advance\graphtemp by 0.307in
    \rlap{\kern 0.706in\lower\graphtemp\hbox to 0pt{\hss $\bu$\hss}}%
    \graphtemp=.5ex\advance\graphtemp by 0.307in
    \rlap{\kern 1.588in\lower\graphtemp\hbox to 0pt{\hss $\bu$\hss}}%
    \graphtemp=.5ex\advance\graphtemp by 0.895in
    \rlap{\kern 0.412in\lower\graphtemp\hbox to 0pt{\hss $\bu$\hss}}%
    \graphtemp=.5ex\advance\graphtemp by 0.895in
    \rlap{\kern 1.882in\lower\graphtemp\hbox to 0pt{\hss $\bu$\hss}}%
    \graphtemp=.5ex\advance\graphtemp by 0.307in
    \rlap{\kern 0.412in\lower\graphtemp\hbox to 0pt{\hss $\bu$\hss}}%
    \graphtemp=.5ex\advance\graphtemp by 1.189in
    \rlap{\kern 1.294in\lower\graphtemp\hbox to 0pt{\hss $\bu$\hss}}%
    \graphtemp=.5ex\advance\graphtemp by 0.307in
    \rlap{\kern 1.882in\lower\graphtemp\hbox to 0pt{\hss $\bu$\hss}}%
    \special{pn 11}%
    \special{pa 1588 895}%
    \special{pa 1294 1189}%
    \special{fp}%
    \special{pa 1588 895}%
    \special{pa 1882 895}%
    \special{fp}%
    \special{pa 1588 307}%
    \special{pa 1882 307}%
    \special{fp}%
    \special{pa 1000 601}%
    \special{pa 1294 601}%
    \special{fp}%
    \special{pa 1000 601}%
    \special{pa 706 895}%
    \special{fp}%
    \special{pa 1000 601}%
    \special{pa 706 307}%
    \special{fp}%
    \special{pa 1294 601}%
    \special{pa 1000 601}%
    \special{fp}%
    \special{pa 1294 601}%
    \special{pa 1588 895}%
    \special{fp}%
    \special{pa 1294 601}%
    \special{pa 1588 307}%
    \special{fp}%
    \special{pn 8}%
    \special{ar 1147 471 471 471 -2.786171 -0.355421}%
    \special{ar 506 601 309 309 1.880641 4.402544}%
    \special{pn 11}%
    \special{pa 706 895}%
    \special{pa 412 895}%
    \special{fp}%
    \special{pa 706 307}%
    \special{pa 412 307}%
    \special{fp}%
    \graphtemp=.5ex\advance\graphtemp by 0.601in
    \rlap{\kern 0.882in\lower\graphtemp\hbox to 0pt{\hss $x$\hss}}%
    \graphtemp=.5ex\advance\graphtemp by 1.272in
    \rlap{\kern 1.211in\lower\graphtemp\hbox to 0pt{\hss $b$\hss}}%
    \graphtemp=.5ex\advance\graphtemp by 0.224in
    \rlap{\kern 0.623in\lower\graphtemp\hbox to 0pt{\hss $w$\hss}}%
    \graphtemp=.5ex\advance\graphtemp by 0.601in
    \rlap{\kern 1.412in\lower\graphtemp\hbox to 0pt{\hss $y$\hss}}%
    \graphtemp=.5ex\advance\graphtemp by 0.978in
    \rlap{\kern 1.671in\lower\graphtemp\hbox to 0pt{\hss $a$\hss}}%
    \graphtemp=.5ex\advance\graphtemp by 0.224in
    \rlap{\kern 1.671in\lower\graphtemp\hbox to 0pt{\hss $z$\hss}}%
    \graphtemp=.5ex\advance\graphtemp by 0.895in
    \rlap{\kern 2.000in\lower\graphtemp\hbox to 0pt{\hss $c$\hss}}%
    \graphtemp=.5ex\advance\graphtemp by 0.307in
    \rlap{\kern 1.147in\lower\graphtemp\hbox to 0pt{\hss $C$\hss}}%
    \graphtemp=.5ex\advance\graphtemp by 0.601in
    \rlap{\kern 0.588in\lower\graphtemp\hbox to 0pt{\hss $D$\hss}}%
    \graphtemp=.5ex\advance\graphtemp by 0.601in
    \rlap{\kern 0.000in\lower\graphtemp\hbox to 0pt{\hss $G$\hss}}%
    \graphtemp=.5ex\advance\graphtemp by 0.601in
    \rlap{\kern 3.353in\lower\graphtemp\hbox to 0pt{\hss $\bu$\hss}}%
    \graphtemp=.5ex\advance\graphtemp by 0.601in
    \rlap{\kern 3.647in\lower\graphtemp\hbox to 0pt{\hss $\bu$\hss}}%
    \graphtemp=.5ex\advance\graphtemp by 0.895in
    \rlap{\kern 3.059in\lower\graphtemp\hbox to 0pt{\hss $\bu$\hss}}%
    \graphtemp=.5ex\advance\graphtemp by 0.895in
    \rlap{\kern 3.941in\lower\graphtemp\hbox to 0pt{\hss $\bu$\hss}}%
    \graphtemp=.5ex\advance\graphtemp by 0.307in
    \rlap{\kern 3.059in\lower\graphtemp\hbox to 0pt{\hss $\bu$\hss}}%
    \graphtemp=.5ex\advance\graphtemp by 0.307in
    \rlap{\kern 3.941in\lower\graphtemp\hbox to 0pt{\hss $\bu$\hss}}%
    \graphtemp=.5ex\advance\graphtemp by 0.895in
    \rlap{\kern 2.765in\lower\graphtemp\hbox to 0pt{\hss $\bu$\hss}}%
    \graphtemp=.5ex\advance\graphtemp by 0.895in
    \rlap{\kern 4.235in\lower\graphtemp\hbox to 0pt{\hss $\bu$\hss}}%
    \graphtemp=.5ex\advance\graphtemp by 0.307in
    \rlap{\kern 2.765in\lower\graphtemp\hbox to 0pt{\hss $\bu$\hss}}%
    \graphtemp=.5ex\advance\graphtemp by 1.189in
    \rlap{\kern 3.647in\lower\graphtemp\hbox to 0pt{\hss $\bu$\hss}}%
    \graphtemp=.5ex\advance\graphtemp by 0.307in
    \rlap{\kern 4.235in\lower\graphtemp\hbox to 0pt{\hss $\bu$\hss}}%
    \special{pa 3941 895}%
    \special{pa 3647 1189}%
    \special{fp}%
    \special{pa 3941 307}%
    \special{pa 4235 307}%
    \special{fp}%
    \special{pa 3353 601}%
    \special{pa 3941 895}%
    \special{fp}%
    \special{pa 3353 601}%
    \special{pa 3059 895}%
    \special{fp}%
    \special{pa 3353 601}%
    \special{pa 3059 307}%
    \special{fp}%
    \special{pa 3647 601}%
    \special{pa 4235 895}%
    \special{fp}%
    \special{pa 3647 601}%
    \special{pa 3941 895}%
    \special{fp}%
    \special{pa 3647 601}%
    \special{pa 3941 307}%
    \special{fp}%
    \special{pn 8}%
    \special{ar 3500 471 471 471 -2.786171 -0.355421}%
    \special{ar 2859 601 309 309 1.880641 4.402544}%
    \special{pn 11}%
    \special{pa 3059 895}%
    \special{pa 2765 895}%
    \special{fp}%
    \special{pa 3059 307}%
    \special{pa 2765 307}%
    \special{fp}%
    \graphtemp=.5ex\advance\graphtemp by 0.601in
    \rlap{\kern 3.235in\lower\graphtemp\hbox to 0pt{\hss $x$\hss}}%
    \graphtemp=.5ex\advance\graphtemp by 1.272in
    \rlap{\kern 3.564in\lower\graphtemp\hbox to 0pt{\hss $b$\hss}}%
    \graphtemp=.5ex\advance\graphtemp by 0.224in
    \rlap{\kern 2.976in\lower\graphtemp\hbox to 0pt{\hss $w$\hss}}%
    \graphtemp=.5ex\advance\graphtemp by 0.518in
    \rlap{\kern 3.564in\lower\graphtemp\hbox to 0pt{\hss $y'$\hss}}%
    \graphtemp=.5ex\advance\graphtemp by 1.008in
    \rlap{\kern 4.024in\lower\graphtemp\hbox to 0pt{\hss $a'$\hss}}%
    \graphtemp=.5ex\advance\graphtemp by 0.224in
    \rlap{\kern 4.024in\lower\graphtemp\hbox to 0pt{\hss $z$\hss}}%
    \graphtemp=.5ex\advance\graphtemp by 0.895in
    \rlap{\kern 4.353in\lower\graphtemp\hbox to 0pt{\hss $c$\hss}}%
    \graphtemp=.5ex\advance\graphtemp by 0.307in
    \rlap{\kern 3.500in\lower\graphtemp\hbox to 0pt{\hss $C'$\hss}}%
    \graphtemp=.5ex\advance\graphtemp by 0.601in
    \rlap{\kern 2.941in\lower\graphtemp\hbox to 0pt{\hss $D$\hss}}%
    \graphtemp=.5ex\advance\graphtemp by 0.601in
    \rlap{\kern 4.500in\lower\graphtemp\hbox to 0pt{\hss $H$\hss}}%
    \hbox{\vrule depth1.307in width0pt height 0pt}%
    \kern 4.500in
  }%
}%
}
\caption{A $g$-cycle and a $g'$-cycle with a common edge.}\label{ggcyc}
\end{figure}

\begin{lemma}\label{step8}
No $g$-cycle and $g'$-cycle share an edge.
\end{lemma}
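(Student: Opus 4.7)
The plan is to derive a contradiction from the assumption that some $g$-cycle $C$ and $g'$-cycle $D$ share an edge in $G$. Label $C=\langle v_0,v_1,\ldots,v_{g-1}\rangle$ with $e_0=v_0v_1$ the shared edge, and let $v_i^*$ denote the off-$C$ neighbor of $v_i$. Since $D$ and $C$ cannot be spliced (Lemma~\ref{step4}), the cycle $D$ must use the off-$C$ edges at both $v_0$ and $v_1$, so (writing $D=D_0$) we have $D_0=\langle v_0^*,v_0,v_1,v_1^*,P,v_0^*\rangle$ for some path $P$ of length $g-2$.

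Next I would iterate Lemma~\ref{step7} along $C$: from the path $\langle v_0,v_1,v_2,v_3\rangle$ with $v_0v_1$ in a $g'$-cycle, the lemma gives that $v_2v_3$ lies in a $g'$-cycle $D_2$, and continuing shows that every even-indexed edge $e_{2k}$ of $C$ lies in some $g'$-cycle $D_{2k}$. Lemma~\ref{step6} forces each vertex of $C$ to lie on at most one $g'$-cycle, so each $D_{2k}$ must use the off-$C$ edges at $v_{2k}$ and $v_{2k+1}$, mirroring the shape of $D_0$. This also rules out any odd-indexed edge from being in a $g'$-cycle, since otherwise two consecutive $C$-edges at some vertex would be forced into a common $g'$-cycle, producing a splicing forbidden by Lemma~\ref{step4}. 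When $g$ is odd, the iteration $2k\bmod g$ sweeps every residue, so all edges of $C$ lie in $g'$-cycles and the same splicing contradiction closes the case.

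For even $g$, where only half the edges of $C$ are in $g'$-cycles, the plan is to compare $G$ with the unique (by Lemma~\ref{shortest}) alternative reconstruction $H$ from the card $G-\{v_1,v_2\}$. Remark~\ref{alt} gives $N_H(v_1')=\{v_2',v_0,v_2^*\}$ and $N_H(v_2')=\{v_1',v_3,v_1^*\}$, with $g$-cycle $C^\dagger=\langle v_0,v_1',v_2',v_3,\ldots,v_{g-1}\rangle$. Because $D_2,D_4,\ldots,D_{g-2}$ avoid $\{v_1,v_2\}$, they remain $g'$-cycles in $H$; applying the iterative argument of the previous paragraph inside $H$ then forces $e_0^\dagger=v_0v_1'$ to lie in a $g'$-cycle $\hat D$ of $H$ whose off-$C^\dagger$ edges at $v_0$ and $v_1'$ are $v_0v_0^*$ and $v_1'v_2^*$. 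The completing path of length $g-2$ from $v_2^*$ back to $v_0^*$ that $\hat D$ requires then has to fit together with $D_0$, $D_2$, and the path $P$ in a way that produces a forbidden configuration---either a vertex on two distinct $g'$-cycles (contradicting Lemma~\ref{step6}), a spliced pair of short cycles (contradicting Lemma~\ref{step4}), or an edge joining two distinct $g$-cycles (contradicting Lemma~\ref{step5}).

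The main obstacle I expect is precisely this even-$g$ closure: while the odd case collapses cleanly through the cyclic wrap-around together with Lemmas~\ref{step4} and~\ref{step6}, the even case admits a self-consistent family of parallel $g'$-cycles $D_{2k}$, and the contradiction must be extracted by a careful case analysis of where the forced $v_2^*,v_0^*$-path completing $\hat D$ can sit relative to $P$ and to the off-$C$ neighbors $v_j^*$.
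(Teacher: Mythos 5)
There is a genuine gap: your even-girth case is a plan, not a proof, and it is exactly where the difficulty of the lemma lies (cubic graphs of even girth, e.g.\ girth $6$, are plentiful, so the case cannot be dismissed). Your odd-$g$ argument is essentially sound: iterating Lemma~\ref{step7} puts every edge $e_{2k\bmod g}$ of $C$ in a $g'$-cycle, the wrap-around makes two consecutive edges of $C$ lie in $g'$-cycles, Lemma~\ref{step6} forces those to be one and the same $g'$-cycle, and that cycle is spliced with $C$, contradicting Lemma~\ref{step4}. But for even $g$ you only promise that the forced cycle $\hat D$ in $H$ ``has to fit together with $D_0$, $D_2$, and $P$ in a way that produces a forbidden configuration''; no specific contradiction is derived, and you yourself flag this as the main obstacle. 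Moreover the setup of that case contains an error: $D_2$ is a $g'$-cycle through the edge $v_2v_3$, so it passes through $v_2$ and is destroyed in the card $G-\{v_1,v_2\}$; the claim that $D_2,D_4,\ldots,D_{g-2}$ ``avoid $\{v_1,v_2\}$ and remain $g'$-cycles in $H$'' fails already for $D_2$ (and for the others you would still need an argument that they avoid $v_1$ and $v_2$). So the reconstruction-switch you propose does not obviously re-seed the iteration in $H$, and the case analysis you defer is the whole content of the missing step.

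For comparison, the paper avoids any parity split by using Lemma~\ref{step7} just twice. With $\la w,x,y,z,c\ra$ on $C$ and $wx$ on the $g'$-cycle $D$: first, Lemma~\ref{step7} in $G$ gives a $g'$-cycle $B$ through $yz$. Then one deletes $\{w,y\}$ (two vertices at distance $2$ on $C$); by Lemma~\ref{shortest} the unique alternative reconstruction $H$ has the $g$-cycle $C'$ through $w',y'$, and $D$ turns into a $g'$-cycle $D'$ of $H$ sharing the edge $xy'$ with $C'$. Applying Lemma~\ref{step7} inside $H$ to $\la w',x,y',z,c\ra$ (one step further along $C'$) produces a $g'$-cycle $Q$ through $zc$ which, since it must avoid $w'$ and $y'$ to avoid short or spliced cycles, lies entirely in the common part of $H$ and $G$. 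Back in $G$, the vertex $z$ then lies on the $g$-cycle $C$ and the two distinct $g'$-cycles $B$ and $Q$ (distinct because $B$ uses $y$ and $Q$ does not), contradicting Lemma~\ref{step6}. If you want to salvage your approach, you would need to supply an argument of comparable concreteness for even $g$; as written, the proposal does not prove the lemma.
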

\begin{proof}
Let $wx$ be such an edge in $G$, shared by the $g'$-cycle $D$ and the
$g$-cycle $C$ containing the path $\la w,x,y,z,c\ra$.  By Lemma~\ref{step7},
$yz$ lies in a $g'$-cycle $B$ in $G$.

Since $w$ and $y$ lie on a $g$-cycle, from $G-\{w,y\}$ there is only one
alternative reconstruction $H$, in which by Lemma~\ref{shortest} the $g$-cycle
$C'$ through the missing vertices $w'$ and $y'$ is obtained from $C$ by
replacing $w$ with $w'$ and $y$ with $y'$.  Thus also $w'a,y'b\in E(H)$,
where $a$ and $b$ are the neighbors outside $C$ of $y$ and $w$ in $G$,
respectively (see Figure~\ref{noshare}).

Replacing $\la b,w,x\ra$ in $D$ with $\la b,y',x\ra$ yields a $g'$-cycle $D'$
in $H$.  Since $C'$ and $D'$ share the edge $xy'$, by Lemma~\ref{step7}
the edge $zc$ lies in a $g'$-cycle $Q$ in $H$.  Since shorter cycles and
spliced cycles must be avoided, $Q$ avoids $y'$ and $w'$.  Hence $Q$ appears
also in $G$.  Now in $G$ the vertex $z$ appears in the $g$-cycle $C$ and
$g'$-cycles $B$ and $Q$, which is forbidden by Lemma~\ref{step6}.
\end{proof}

\vspace{-1pc}
\begin{figure}[hbt]
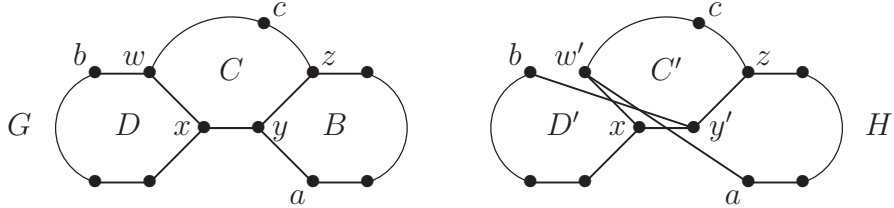

\gpic{
\expandafter\ifx\csname graph\endcsname\relax \csname newbox\endcsname\graph\fi
\expandafter\ifx\csname graphtemp\endcsname\relax \csname newdimen\endcsname\graphtemp\fi
\setbox\graph=\vtop{\vskip 0pt\hbox{%
    \graphtemp=.5ex\advance\graphtemp by 0.655in
    \rlap{\kern 0.968in\lower\graphtemp\hbox to 0pt{\hss $\bu$\hss}}%
    \graphtemp=.5ex\advance\graphtemp by 0.655in
    \rlap{\kern 1.253in\lower\graphtemp\hbox to 0pt{\hss $\bu$\hss}}%
    \graphtemp=.5ex\advance\graphtemp by 0.940in
    \rlap{\kern 0.684in\lower\graphtemp\hbox to 0pt{\hss $\bu$\hss}}%
    \graphtemp=.5ex\advance\graphtemp by 0.940in
    \rlap{\kern 1.538in\lower\graphtemp\hbox to 0pt{\hss $\bu$\hss}}%
    \graphtemp=.5ex\advance\graphtemp by 0.370in
    \rlap{\kern 0.684in\lower\graphtemp\hbox to 0pt{\hss $\bu$\hss}}%
    \graphtemp=.5ex\advance\graphtemp by 0.370in
    \rlap{\kern 1.538in\lower\graphtemp\hbox to 0pt{\hss $\bu$\hss}}%
    \graphtemp=.5ex\advance\graphtemp by 0.940in
    \rlap{\kern 0.399in\lower\graphtemp\hbox to 0pt{\hss $\bu$\hss}}%
    \graphtemp=.5ex\advance\graphtemp by 0.940in
    \rlap{\kern 1.823in\lower\graphtemp\hbox to 0pt{\hss $\bu$\hss}}%
    \graphtemp=.5ex\advance\graphtemp by 0.370in
    \rlap{\kern 0.399in\lower\graphtemp\hbox to 0pt{\hss $\bu$\hss}}%
    \graphtemp=.5ex\advance\graphtemp by 0.114in
    \rlap{\kern 1.282in\lower\graphtemp\hbox to 0pt{\hss $\bu$\hss}}%
    \graphtemp=.5ex\advance\graphtemp by 0.033in
    \rlap{\kern 1.362in\lower\graphtemp\hbox to 0pt{\hss $c$\hss}}%
    \graphtemp=.5ex\advance\graphtemp by 0.370in
    \rlap{\kern 1.823in\lower\graphtemp\hbox to 0pt{\hss $\bu$\hss}}%
    \special{pn 11}%
    \special{pa 1538 940}%
    \special{pa 1823 940}%
    \special{fp}%
    \special{pa 1538 370}%
    \special{pa 1823 370}%
    \special{fp}%
    \special{pa 684 940}%
    \special{pa 399 940}%
    \special{fp}%
    \special{pa 684 370}%
    \special{pa 399 370}%
    \special{fp}%
    \special{pa 968 655}%
    \special{pa 1253 655}%
    \special{fp}%
    \special{pa 968 655}%
    \special{pa 684 940}%
    \special{fp}%
    \special{pa 968 655}%
    \special{pa 684 370}%
    \special{fp}%
    \special{pa 1253 655}%
    \special{pa 968 655}%
    \special{fp}%
    \special{pa 1253 655}%
    \special{pa 1538 940}%
    \special{fp}%
    \special{pa 1253 655}%
    \special{pa 1538 370}%
    \special{fp}%
    \special{pn 8}%
    \special{ar 1111 529 456 456 -2.786171 -0.355421}%
    \special{ar 490 655 299 299 1.880641 4.402544}%
    \special{ar 1732 655 299 299 -1.260952 1.260952}%
    \graphtemp=.5ex\advance\graphtemp by 0.655in
    \rlap{\kern 0.854in\lower\graphtemp\hbox to 0pt{\hss $x$\hss}}%
    \graphtemp=.5ex\advance\graphtemp by 0.290in
    \rlap{\kern 0.318in\lower\graphtemp\hbox to 0pt{\hss $b$\hss}}%
    \graphtemp=.5ex\advance\graphtemp by 0.290in
    \rlap{\kern 0.603in\lower\graphtemp\hbox to 0pt{\hss $w$\hss}}%
    \graphtemp=.5ex\advance\graphtemp by 0.655in
    \rlap{\kern 1.652in\lower\graphtemp\hbox to 0pt{\hss $B$\hss}}%
    \graphtemp=.5ex\advance\graphtemp by 0.655in
    \rlap{\kern 1.367in\lower\graphtemp\hbox to 0pt{\hss $y$\hss}}%
    \graphtemp=.5ex\advance\graphtemp by 1.020in
    \rlap{\kern 1.457in\lower\graphtemp\hbox to 0pt{\hss $a$\hss}}%
    \graphtemp=.5ex\advance\graphtemp by 0.290in
    \rlap{\kern 1.619in\lower\graphtemp\hbox to 0pt{\hss $z$\hss}}%
    \graphtemp=.5ex\advance\graphtemp by 0.370in
    \rlap{\kern 1.111in\lower\graphtemp\hbox to 0pt{\hss $C$\hss}}%
    \graphtemp=.5ex\advance\graphtemp by 0.655in
    \rlap{\kern 0.570in\lower\graphtemp\hbox to 0pt{\hss $D$\hss}}%
    \graphtemp=.5ex\advance\graphtemp by 0.655in
    \rlap{\kern 0.000in\lower\graphtemp\hbox to 0pt{\hss $G$\hss}}%
    \graphtemp=.5ex\advance\graphtemp by 0.655in
    \rlap{\kern 3.247in\lower\graphtemp\hbox to 0pt{\hss $\bu$\hss}}%
    \graphtemp=.5ex\advance\graphtemp by 0.655in
    \rlap{\kern 3.532in\lower\graphtemp\hbox to 0pt{\hss $\bu$\hss}}%
    \graphtemp=.5ex\advance\graphtemp by 0.940in
    \rlap{\kern 2.962in\lower\graphtemp\hbox to 0pt{\hss $\bu$\hss}}%
    \graphtemp=.5ex\advance\graphtemp by 0.940in
    \rlap{\kern 3.816in\lower\graphtemp\hbox to 0pt{\hss $\bu$\hss}}%
    \graphtemp=.5ex\advance\graphtemp by 0.370in
    \rlap{\kern 2.962in\lower\graphtemp\hbox to 0pt{\hss $\bu$\hss}}%
    \graphtemp=.5ex\advance\graphtemp by 0.370in
    \rlap{\kern 3.816in\lower\graphtemp\hbox to 0pt{\hss $\bu$\hss}}%
    \graphtemp=.5ex\advance\graphtemp by 0.940in
    \rlap{\kern 2.677in\lower\graphtemp\hbox to 0pt{\hss $\bu$\hss}}%
    \graphtemp=.5ex\advance\graphtemp by 0.940in
    \rlap{\kern 4.101in\lower\graphtemp\hbox to 0pt{\hss $\bu$\hss}}%
    \graphtemp=.5ex\advance\graphtemp by 0.370in
    \rlap{\kern 2.677in\lower\graphtemp\hbox to 0pt{\hss $\bu$\hss}}%
    \graphtemp=.5ex\advance\graphtemp by 0.114in
    \rlap{\kern 3.560in\lower\graphtemp\hbox to 0pt{\hss $\bu$\hss}}%
    \graphtemp=.5ex\advance\graphtemp by 0.033in
    \rlap{\kern 3.641in\lower\graphtemp\hbox to 0pt{\hss $c$\hss}}%
    \graphtemp=.5ex\advance\graphtemp by 0.370in
    \rlap{\kern 4.101in\lower\graphtemp\hbox to 0pt{\hss $\bu$\hss}}%
    \special{pn 11}%
    \special{pa 3816 370}%
    \special{pa 4101 370}%
    \special{fp}%
    \special{pa 2962 940}%
    \special{pa 2677 940}%
    \special{fp}%
    \special{pa 2962 370}%
    \special{pa 3816 940}%
    \special{fp}%
    \special{pa 3816 940}%
    \special{pa 4101 940}%
    \special{fp}%
    \special{pa 3247 655}%
    \special{pa 3532 655}%
    \special{fp}%
    \special{pa 3247 655}%
    \special{pa 2962 940}%
    \special{fp}%
    \special{pa 3247 655}%
    \special{pa 2962 370}%
    \special{fp}%
    \special{pa 3532 655}%
    \special{pa 3247 655}%
    \special{fp}%
    \special{pa 3532 655}%
    \special{pa 2677 370}%
    \special{fp}%
    \special{pa 3532 655}%
    \special{pa 3816 370}%
    \special{fp}%
    \special{pn 8}%
    \special{ar 3389 529 456 456 -2.786171 -0.355421}%
    \special{ar 2768 655 299 299 1.880641 4.402544}%
    \special{ar 4010 655 299 299 -1.260952 1.260952}%
    \graphtemp=.5ex\advance\graphtemp by 0.655in
    \rlap{\kern 3.133in\lower\graphtemp\hbox to 0pt{\hss $x$\hss}}%
    \graphtemp=.5ex\advance\graphtemp by 0.290in
    \rlap{\kern 2.597in\lower\graphtemp\hbox to 0pt{\hss $b$\hss}}%
    \graphtemp=.5ex\advance\graphtemp by 0.290in
    \rlap{\kern 2.881in\lower\graphtemp\hbox to 0pt{\hss $w'$\hss}}%
    \graphtemp=.5ex\advance\graphtemp by 0.655in
    \rlap{\kern 3.674in\lower\graphtemp\hbox to 0pt{\hss $y'$\hss}}%
    \graphtemp=.5ex\advance\graphtemp by 1.020in
    \rlap{\kern 3.736in\lower\graphtemp\hbox to 0pt{\hss $a$\hss}}%
    \graphtemp=.5ex\advance\graphtemp by 0.290in
    \rlap{\kern 3.897in\lower\graphtemp\hbox to 0pt{\hss $z$\hss}}%
    \graphtemp=.5ex\advance\graphtemp by 0.370in
    \rlap{\kern 3.389in\lower\graphtemp\hbox to 0pt{\hss $C'$\hss}}%
    \graphtemp=.5ex\advance\graphtemp by 0.655in
    \rlap{\kern 2.848in\lower\graphtemp\hbox to 0pt{\hss $D'$\hss}}%
    \graphtemp=.5ex\advance\graphtemp by 0.655in
    \rlap{\kern 4.500in\lower\graphtemp\hbox to 0pt{\hss $H$\hss}}%
    \hbox{\vrule depth1.054in width0pt height 0pt}%
    \kern 4.500in
  }%
}%
}
\caption{Another $g$-cycle and a $g'$-cycle with a common edge.}\label{noshare}
\end{figure}

\begin{lemma}\label{step9}
Every $g$-cycle shares an edge with some $g'$-cycle.
\end{lemma}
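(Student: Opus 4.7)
Suppose for contradiction that some $g$-cycle $C$ in $G$ has no edge lying in any $g'$-cycle. First I would observe that no vertex $z\in V(C)$ can then lie in any $g'$-cycle either: such a $g'$-cycle would use two edges at $z$, but $z$ has only one non-$C$-edge (its pendant), so at least one of those two edges would have to lie on $C$, contradicting the hypothesis.

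Next, fix an edge $xy\in E(C)$ with $\la a,x,y,u\ra$ along $C$ and pendants $b,v$ at $x,y$, and let $H$ be the alternative reconstruction from $G-\{x,y\}$ as in Remark~\ref{alt}. Since no $g'$-cycle of $G$ uses $x$ or $y$, every $g'$-cycle of $G$ already lives in the common card $G-\{x,y\}=H-\{x',y'\}$; by Lemma~\ref{subcount} the total number of $g'$-cycles agrees in $G$ and in $H$, so no $g'$-cycle of $H$ can use $x'$ or $y'$. Translating these prohibitions into paths in the card yields $d_{G-\{x,y\}}(p,q)\ge g-1$ for each pair $(p,q)$ of distinct vertices in $\{a,b,u,v\}$ other than $(a,u)$ (where $d=g-3$ comes from the far side of $C$), and in fact $\ge g$ for all such pairs except $(b,v)$. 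Indeed, a shorter $p,q$-path in the card would close, via the length-$2$ or length-$3$ $p,q$-path through $\{x,y\}$ in $G$ or through $\{x',y'\}$ in $H$, into a cycle of length at most $g+1$ that is either a second $g$-cycle through one of $\{x,y,x',y'\}$ (contradicting Lemma~\ref{step3}), a $g'$-cycle through an edge of $C$ (contradicting the hypothesis), or a $g'$-cycle through $x'$ or $y'$ (contradicting the previous step).

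For the final contradiction I would examine a second card---most naturally $G-\{a,y\}$, with $a$ and $y$ at distance $2$ on $C$ via $x$---whose unique alternative reconstruction $H'$ (by Lemma~\ref{shortest}) is obtained from $G$ by swapping the pendants at $a$ and $y$, producing new edges $a^{\ast}v$ and $y^{\ast}b_a$ where $b_a$ is the pendant of $a$ in $G$. The distance bounds established above (and the analogous bounds obtained by repeating the second paragraph for the card $G-\{a,y\}$) force the new length-$2$ path $\la v,a^{\ast},\alpha\ra$ in $H'$ (with $\alpha$ the $C$-neighbor of $a$ other than $x$) to combine with an existing short path into a cycle of length $g$ or $g+1$ through a vertex of the new $g$-cycle $C^{\ast}$ in $H'$, contradicting either the $H'$-analogue of Lemma~\ref{step3} or the $H'$-analogue of the opening observation. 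The principal obstacle will be the precise identification of this offending cycle in $H'$; I expect this to require a small case analysis on the parity of $g$, since Lemma~\ref{step7} propagates ``not in a $g'$-cycle'' around $C$ only in steps of two, so the even and odd cases may need to be handled separately.
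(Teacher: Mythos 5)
Your first two paragraphs are sound: the observation that no vertex of $C$ could lie in a $g'$-cycle, the count showing that no $g'$-cycle of $H$ passes through $x'$ or $y'$, and the resulting distance bounds in the card all check out (including the finer point that $(b,v)$ only gets $\ge g-1$ while the other crossed pairs get $\ge g$). The gap is exactly where you place the "principal obstacle": the third paragraph contains no working mechanism for a contradiction. All the information you have extracted is of non-existence type (girth, Lemma~\ref{step3}, the hypothesis on $C$, and the counting conclusion that no $g'$-cycle of $H$ or $H'$ meets the replacement vertices), and lower bounds on distances cannot \emph{force} an ``offending cycle'' to appear in $H'$. Worse, both of your cards delete two vertices lying on $C$, so in the alternative reconstruction the surviving copy of $C$ (your $C'$, resp.\ $C^{*}$) already accounts exactly for the matching of cycle counts through the deleted vertices: in $G$ there is one $g$-cycle and no $g'$-cycle meeting $\{a,y\}$, and in $H'$ the cycle $C^{*}$ and nothing else does the same, so the count comparison is satisfied vacuously and nothing new is forced. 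The appeal to Lemma~\ref{step7} and to the parity of $g$ is a red herring: under your hypothesis no edge of $C$ lies in a $g'$-cycle, so that lemma never triggers along $C$. As written, the proposal is an incomplete sketch whose intended final step would not go through.

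The paper's proof supplies precisely the forcing your plan lacks, by choosing one deleted vertex on $C$ and one off it. Take $x\in V(C)$ with $C$-neighbors $a,b$, and let $y$ be the third neighbor of $x$ (off $C$), with $N_G(y)=\{x,u,v\}$. In an alternative reconstruction $H$ from $G-\{x,y\}$ the pairing must be crossed, say $x'a,x'v,y'b,y'u\in E(H)$. Now $C$ is a $g$-cycle of $G$ through exactly one of the deleted vertices, and (using Lemma~\ref{step3} and the girth) no $g$-cycle of $G$ or of $H$ passes through both deleted or both new vertices; comparing counts, $H$ must contain a $g$-cycle through exactly one of $x',y'$. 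Such a cycle avoids the edge $x'y'$, hence contains $\la a,x',v\ra$ or $\la b,y',u\ra$, and replacing that path by $\la a,x,y,v\ra$ or $\la b,x,y,u\ra$ yields a $g'$-cycle of $G$ through the edge $ax$ or $bx$ of $C$ --- which proves the lemma outright (and contradicts your hypothesis). If you want to salvage your write-up, keep your paragraphs one and two if you like, but the decisive card must be of this mixed type; no card with both deleted vertices on $C$ can produce the required forced cycle.
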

\begin{proof}
Let $\la a,x,b\ra$ be a path along a $g$-cycle $C$ in $G$.
Let $y$ be the neighbor of $x$ outside $C$, with $N_G(y)=\{x,u,v\}$.
Let $H$ be an alternative reconstruction from $G-\{x,y\}$, with missing
vertices $x'$ and $y'$.  As usual, $x'y'\in E(H)$, and by symmetry we may
assume $x'a,y'b\in E(H)$.  We may also choose the labels $u$ and $v$ so that
$x'v,y'u\in E(H)$ (see Figure~\ref{share}).

Since $C$ is a $g$-cycle in $G$ containing exactly one of $x$ and $y$,
in $H$ there must be a $g$-cycle $C'$ containing exactly one of $x'$ and $y'$.
Such a cycle must contain $\la a,x',v\ra$ or $\la b,y',u\ra$.
Replacing this path in $C'$ with $\la a,x,y,v\ra$ or $\la b,x,y,u\ra$,
respectively, yields a $g'$-cycle in $G$ that shares an edge with $C$.
\end{proof}

\vspace{-1pc}
\begin{figure}[hbt]
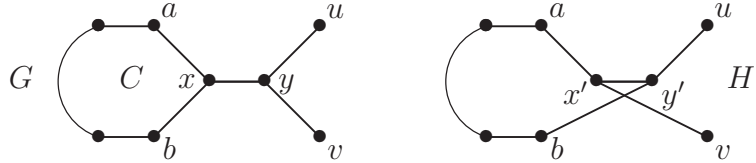

\gpic{
\expandafter\ifx\csname graph\endcsname\relax \csname newbox\endcsname\graph\fi
\expandafter\ifx\csname graphtemp\endcsname\relax \csname newdimen\endcsname\graphtemp\fi
\setbox\graph=\vtop{\vskip 0pt\hbox{%
    \graphtemp=.5ex\advance\graphtemp by 0.406in
    \rlap{\kern 0.986in\lower\graphtemp\hbox to 0pt{\hss $\bu$\hss}}%
    \graphtemp=.5ex\advance\graphtemp by 0.406in
    \rlap{\kern 1.275in\lower\graphtemp\hbox to 0pt{\hss $\bu$\hss}}%
    \graphtemp=.5ex\advance\graphtemp by 0.696in
    \rlap{\kern 0.696in\lower\graphtemp\hbox to 0pt{\hss $\bu$\hss}}%
    \graphtemp=.5ex\advance\graphtemp by 0.696in
    \rlap{\kern 1.565in\lower\graphtemp\hbox to 0pt{\hss $\bu$\hss}}%
    \graphtemp=.5ex\advance\graphtemp by 0.116in
    \rlap{\kern 0.696in\lower\graphtemp\hbox to 0pt{\hss $\bu$\hss}}%
    \graphtemp=.5ex\advance\graphtemp by 0.116in
    \rlap{\kern 1.565in\lower\graphtemp\hbox to 0pt{\hss $\bu$\hss}}%
    \graphtemp=.5ex\advance\graphtemp by 0.696in
    \rlap{\kern 0.406in\lower\graphtemp\hbox to 0pt{\hss $\bu$\hss}}%
    \graphtemp=.5ex\advance\graphtemp by 0.116in
    \rlap{\kern 0.406in\lower\graphtemp\hbox to 0pt{\hss $\bu$\hss}}%
    \graphtemp=.5ex\advance\graphtemp by 0.116in
    \rlap{\kern 0.406in\lower\graphtemp\hbox to 0pt{\hss $\bu$\hss}}%
    \special{pn 11}%
    \special{pa 696 696}%
    \special{pa 406 696}%
    \special{fp}%
    \special{pa 696 116}%
    \special{pa 406 116}%
    \special{fp}%
    \special{pa 986 406}%
    \special{pa 1275 406}%
    \special{fp}%
    \special{pa 986 406}%
    \special{pa 696 696}%
    \special{fp}%
    \special{pa 986 406}%
    \special{pa 696 116}%
    \special{fp}%
    \special{pa 1275 406}%
    \special{pa 986 406}%
    \special{fp}%
    \special{pa 1275 406}%
    \special{pa 1565 696}%
    \special{fp}%
    \special{pa 1275 406}%
    \special{pa 1565 116}%
    \special{fp}%
    \special{pn 8}%
    \special{ar 499 406 304 304 1.880641 4.402544}%
    \graphtemp=.5ex\advance\graphtemp by 0.406in
    \rlap{\kern 0.870in\lower\graphtemp\hbox to 0pt{\hss $x$\hss}}%
    \graphtemp=.5ex\advance\graphtemp by 0.778in
    \rlap{\kern 0.778in\lower\graphtemp\hbox to 0pt{\hss $b$\hss}}%
    \graphtemp=.5ex\advance\graphtemp by 0.034in
    \rlap{\kern 0.778in\lower\graphtemp\hbox to 0pt{\hss $a$\hss}}%
    \graphtemp=.5ex\advance\graphtemp by 0.406in
    \rlap{\kern 1.391in\lower\graphtemp\hbox to 0pt{\hss $y$\hss}}%
    \graphtemp=.5ex\advance\graphtemp by 0.778in
    \rlap{\kern 1.647in\lower\graphtemp\hbox to 0pt{\hss $v$\hss}}%
    \graphtemp=.5ex\advance\graphtemp by 0.034in
    \rlap{\kern 1.647in\lower\graphtemp\hbox to 0pt{\hss $u$\hss}}%
    \graphtemp=.5ex\advance\graphtemp by 0.116in
    \rlap{\kern 1.130in\lower\graphtemp\hbox to 0pt{\hss $~$\hss}}%
    \graphtemp=.5ex\advance\graphtemp by 0.406in
    \rlap{\kern 0.580in\lower\graphtemp\hbox to 0pt{\hss $C$\hss}}%
    \graphtemp=.5ex\advance\graphtemp by 0.406in
    \rlap{\kern 0.000in\lower\graphtemp\hbox to 0pt{\hss $G$\hss}}%
    \graphtemp=.5ex\advance\graphtemp by 0.406in
    \rlap{\kern 3.014in\lower\graphtemp\hbox to 0pt{\hss $\bu$\hss}}%
    \graphtemp=.5ex\advance\graphtemp by 0.406in
    \rlap{\kern 3.304in\lower\graphtemp\hbox to 0pt{\hss $\bu$\hss}}%
    \graphtemp=.5ex\advance\graphtemp by 0.696in
    \rlap{\kern 2.725in\lower\graphtemp\hbox to 0pt{\hss $\bu$\hss}}%
    \graphtemp=.5ex\advance\graphtemp by 0.696in
    \rlap{\kern 3.594in\lower\graphtemp\hbox to 0pt{\hss $\bu$\hss}}%
    \graphtemp=.5ex\advance\graphtemp by 0.116in
    \rlap{\kern 2.725in\lower\graphtemp\hbox to 0pt{\hss $\bu$\hss}}%
    \graphtemp=.5ex\advance\graphtemp by 0.116in
    \rlap{\kern 3.594in\lower\graphtemp\hbox to 0pt{\hss $\bu$\hss}}%
    \graphtemp=.5ex\advance\graphtemp by 0.696in
    \rlap{\kern 2.435in\lower\graphtemp\hbox to 0pt{\hss $\bu$\hss}}%
    \graphtemp=.5ex\advance\graphtemp by 0.116in
    \rlap{\kern 2.435in\lower\graphtemp\hbox to 0pt{\hss $\bu$\hss}}%
    \graphtemp=.5ex\advance\graphtemp by 0.116in
    \rlap{\kern 2.435in\lower\graphtemp\hbox to 0pt{\hss $\bu$\hss}}%
    \special{pn 11}%
    \special{pa 2725 696}%
    \special{pa 2435 696}%
    \special{fp}%
    \special{pa 2725 116}%
    \special{pa 2435 116}%
    \special{fp}%
    \special{pa 3014 406}%
    \special{pa 3304 406}%
    \special{fp}%
    \special{pa 3014 406}%
    \special{pa 3594 696}%
    \special{fp}%
    \special{pa 3014 406}%
    \special{pa 2725 116}%
    \special{fp}%
    \special{pa 3304 406}%
    \special{pa 3014 406}%
    \special{fp}%
    \special{pa 3304 406}%
    \special{pa 2725 696}%
    \special{fp}%
    \special{pa 3304 406}%
    \special{pa 3594 116}%
    \special{fp}%
    \special{pn 8}%
    \special{ar 2528 406 304 304 1.880641 4.402544}%
    \graphtemp=.5ex\advance\graphtemp by 0.488in
    \rlap{\kern 2.904in\lower\graphtemp\hbox to 0pt{\hss $x'$\hss}}%
    \graphtemp=.5ex\advance\graphtemp by 0.778in
    \rlap{\kern 2.807in\lower\graphtemp\hbox to 0pt{\hss $b$\hss}}%
    \graphtemp=.5ex\advance\graphtemp by 0.034in
    \rlap{\kern 2.807in\lower\graphtemp\hbox to 0pt{\hss $a$\hss}}%
    \graphtemp=.5ex\advance\graphtemp by 0.488in
    \rlap{\kern 3.415in\lower\graphtemp\hbox to 0pt{\hss $y'$\hss}}%
    \graphtemp=.5ex\advance\graphtemp by 0.778in
    \rlap{\kern 3.676in\lower\graphtemp\hbox to 0pt{\hss $v$\hss}}%
    \graphtemp=.5ex\advance\graphtemp by 0.034in
    \rlap{\kern 3.676in\lower\graphtemp\hbox to 0pt{\hss $u$\hss}}%
    \graphtemp=.5ex\advance\graphtemp by 0.116in
    \rlap{\kern 3.159in\lower\graphtemp\hbox to 0pt{\hss $~$\hss}}%
    \graphtemp=.5ex\advance\graphtemp by 0.406in
    \rlap{\kern 2.609in\lower\graphtemp\hbox to 0pt{\hss $~$\hss}}%
    \graphtemp=.5ex\advance\graphtemp by 0.406in
    \rlap{\kern 3.768in\lower\graphtemp\hbox to 0pt{\hss $H$\hss}}%
    \hbox{\vrule depth0.812in width0pt height 0pt}%
    \kern 4.000in
  }%
}%
}
\caption{Forcing a common edge.}\label{share}
\end{figure}

Lemmas~\ref{step8} and~\ref{step9} are contradictory.  Hence no $3$-regular
graph $G$ has an alternative reconstruction from its $(n-2)$-deck, which proves
Theorem~\ref{main}.

\bigskip
\centerline{\bf Acknowledgment}
The authors thank Martin Merker, Bojan Mohar, and Hehui Wu for their 
contributions to early discussions.

\end{document}